\documentclass[letterpaper]{article}

\usepackage{charter}
\usepackage[utf8]{inputenc}
\usepackage[T1]{fontenc}
\usepackage[dvipsnames]{xcolor}
\usepackage{hyperref}
\usepackage{enumitem}
\usepackage[super]{nth}
\usepackage[gen]{eurosym}
\usepackage{graphicx}
\usepackage{amssymb}
\usepackage{amsmath}
\usepackage{stmaryrd}
\usepackage{algorithm}
\usepackage{algorithmic}
\usepackage{dsfont}
\usepackage{mathtools}
\usepackage{tikz}
\usepackage[round]{natbib}
\usepackage{apxproof}
\usepackage{booktabs}
\usepackage{amsfonts}
\usepackage{amsthm}
\usepackage[font=small]{caption}
\usepackage{parskip}

\renewcommand{\leq}{\leqslant}
\renewcommand{\geq}{\geqslant}
\renewcommand{\epsilon}{\varepsilon}

\DeclareMathOperator*{\argmin}{\mathrm{arg\,min}}

\DeclarePairedDelimiter\floor{\lfloor}{\rfloor}

\newtheorem{theorem}{Theorem}[section]

\newtheorem{fact}[theorem]{Fact}
\newtheorem{lemma}[theorem]{Lemma}

\newtheorem{remark}[theorem]{Remark}

\makeatletter
\newenvironment{template}[1][htb]{%
    \renewcommand{\ALG@name}{Template}
   \begin{algorithm}[#1]%
  }{\end{algorithm}}
\makeatother

\hypersetup{
    colorlinks=true,
    linkcolor={red!60!black},
    citecolor={green!40!black},
    urlcolor={blue!90!black}
}

\hoffset       0.3cm
\voffset       -1.1cm
\oddsidemargin -0.3cm
\textwidth     16.5cm
\topmargin     0.0cm
\headheight    0.0cm
\textheight    22.7cm
\tolerance     1000
\parindent     15pt

\begin{document}

\begin{center}
 {\Large Projection-Free Adaptive Gradients for Large-Scale Optimization}
\end{center}

\vspace{7mm}

\noindent\textbf{Cyrille W.~Combettes}\hfill\href{mailto:cyrille@gatech.edu}{\ttfamily cyrille@gatech.edu}\\
\emph{\small School of Industrial and Systems Engineering and Department for AI in Society, Science, and Technology\\
Georgia Institute of Technology and Zuse Institute Berlin\\
Atlanta, GA, USA and Berlin, Germany}\\
\\
\textbf{Christoph Spiegel}\hfill\href{mailto:spiegel@zib.de}{\ttfamily spiegel@zib.de}\\
\emph{\small Department for AI in Society, Science, and Technology\\
Zuse Institute Berlin\\
Berlin, Germany}\\
\\
\textbf{Sebastian Pokutta}\hfill\href{mailto:pokutta@zib.de}{\ttfamily pokutta@zib.de}\\
\emph{\small Institute of Mathematics and Department for AI in Society, Science, and Technology\\
Technische Universit\"at Berlin and Zuse Institute Berlin\\
Berlin, Germany}

\vspace{7mm}

\begin{center}
\begin{minipage}{0.85\textwidth}
\begin{center}
 \textbf{Abstract}
\end{center}
\vspace{3mm}
 {\small The complexity in large-scale optimization can lie in both handling the objective function and handling the constraint set. In this respect, stochastic Frank-Wolfe algorithms occupy a unique position as they alleviate both computational burdens, by querying only approximate first-order information from the objective and by maintaining feasibility of the iterates without using projections. In this paper, we improve the quality of their first-order information by blending in adaptive gradients. We derive convergence rates and demonstrate the computational advantage of our method over the state-of-the-art stochastic Frank-Wolfe algorithms on both convex and nonconvex objectives. The experiments further show that our method can improve the performance of adaptive gradient algorithms for constrained optimization.}
\end{minipage}
\end{center}

\vspace{3mm}

\section{Introduction}

Consider the constrained finite-sum optimization problem
\begin{align}
 \min_{x\in\mathcal{C}}\left\{f(x)\coloneqq\frac{1}{m}\sum_{i=1}^mf_i(x)\right\},\label{pb}
\end{align}
where $\mathcal{C}\subset\mathbb{R}^n$ is a compact convex set and $f_1,\ldots,f_m\colon\mathbb{R}^n\rightarrow\mathbb{R}$ are smooth convex functions. We are interested in the case where the problem is \emph{large-scale}. Thus, we assume that exact function and gradient evaluations of the objective $f$ are excessively expensive and stochastic evaluations are significantly more efficient. We also assume that projections onto $\mathcal{C}$ are significantly more expensive than linear optimizations over $\mathcal{C}$. Such situations are encountered in, e.g., regression and classification problems over $\ell_p$-balls, nuclear norm-balls, or structured polytopes \citep{combettes21}. In this setting, stochastic Frank-Wolfe algorithms are the method of choice.

The Frank-Wolfe algorithm (FW) \citep{fw56}, a.k.a.~conditional gradient algorithm \citep{polyak66cg}, is a simple projection-free first-order algorithm for constrained minimization. At each iteration, it calls a linear minimization oracle $v_t\leftarrow\argmin_{v\in\mathcal{C}}\langle\nabla f(x_t),v\rangle$ given by the current gradient $\nabla f(x_t)$ and moves in the direction of $v_t$ by convex combination, updating the new iterate $x_{t+1}\leftarrow x_t+\gamma_t(v_t-x_t)$ where $\gamma_t\in\left[0,1\right]$. Hence, $x_{t+1}\in\mathcal{C}$ by convexity of $\mathcal{C}$ and there is no need to ensure feasibility via projections back onto $\mathcal{C}$. In short, FW avoids the projection step of gradient descent by moving in the direction of a point $v_t$ minimizing over $\mathcal{C}$ the linear approximation of $f$ at $x_t$. The Stochastic Frank-Wolfe algorithm (SFW) simply replaces the gradient $\nabla f(x_t)$ in the input to the linear minimization oracle with a stochastic estimator $\tilde{\nabla}f(x_t)\leftarrow(1/b_t)\sum_{i=i_1}^{i_{b_t}}\nabla f_i(x_t)$, where $i_1,\ldots,i_{b_t}$ are sampled i.i.d.~uniformly at random from $\llbracket1,m\rrbracket$. When the batch-sizes scale as $b_t=\Theta(t^2)$, SFW converges with rate $\mathcal{O}(1/t)$ \citep{hazan16}.

Many variants have been proposed to improve the practical efficiency of SFW, also converging at a rate of $\mathcal{O}(1/t)$. From a theoretical standpoint, a popular measure of efficiency in large-scale optimization is the number of gradient evaluations required to achieve $\epsilon$-convergence. To this end, the Stochastic Variance-Reduced Frank-Wolfe algorithm (SVRF) \citep{hazan16} integrates variance reduction \citep{svrg13,zhang13} in the estimation of the stochastic gradients to improve the batch-size rate to $b_t=\Theta(t)$. The STOchastic variance-Reduced  Conditional gradient sliding algorithm (STORC) \citep{hazan16} builds on the Conditional Gradient Sliding algorithm \citep{lan16cgs} and further reduces the total number of gradient evaluations by half an order of magnitude, although STORC is not as competitive as SVRF in practice \citep[Section~5]{hazan16}. This may be because SVRF obtains more progress per gradient evaluation or because the analysis of STORC is more precise. When the objective is additively separable in the data samples, \citet{negiar20} present a Constant batch-size Stochastic Frank-Wolfe algorithm (CSFW) where the batch-sizes do not need to grow over time, i.e., $b_t=\Theta(1)$; in practice, they set $b_t\leftarrow\floor{m/100}$.

Hence, the number of gradient evaluations required to achieve convergence, although appealing for its theoretical insight, may not necessarily reflect the relative performances of different algorithms in practice. Furthermore, focusing solely on reducing this quantity may actually be ineffective \citep{defazio19}. In this paper, we take a different route and leverage recent advances in optimization to improve the performance of SFW (and variants) via a better use of first-order information. To the best of our knowledge, it has not yet been explored how to take advantage of adaptive gradients \citep{adagrad11,mcmahan10}, which have been very successful in modern large-scale learning (see, e.g., \citet{dean12}), in projection-free optimization. Adaptive gradient algorithms consist in setting entry-wise step-sizes based on first-order information from past iterates. An interpretation of the success of these methods is that they provide a feature-specific learning rate, which is particularly useful when informative features from the dataset are present in the form of rare events. From an optimization standpoint, these adaptive step-sizes fit better to the loss landscape and alleviate the struggle with ill-conditioning, without requiring access to second-order information.

\paragraph{Contributions.} We show that the blend of adaptive gradients and Frank-Wolfe algorithms is successful. We propose a generic template for improving the performance of stochastic Frank-Wolfe algorithms, which applies to all the aforementioned variants. Our method consists in solving the non-Euclidean projection subproblems occurring in the adaptive gradient algorithms \emph{very} incompletely via a fixed and small number of $K$ iterations of the Frank-Wolfe algorithm. We establish convergence guarantees for different implementations of our method, and we demonstrate its computational advantage over the state-of-the-art stochastic Frank-Wolfe algorithms on both convex and nonconvex objectives. Furthermore, the experiments also show that our method can improve the performance of adaptive gradient algorithms on constrained optimization problems. While adaptive gradient algorithms require a non-Euclidean projection at each iteration, our method is projection-free and still leverages adaptive gradients.

\paragraph{Outline.} We start with background materials on stochastic Frank-Wolfe algorithms and adaptive gradient algorithms in Section~\ref{sec:prelim}. In Section~\ref{sec:adafw}, we motivate our approach and present our method through a generic template. We further propose specific implementations and analyze their respective convergence properties. We end that section with practical recommendations and we report computational experiments in Section~\ref{sec:exp}. We conclude the paper with some final remarks in Section~\ref{sec:final}. All proofs can be found in Appendix~\ref{apx:proofs}. Appendix~\ref{apx:K} contains an analysis of the sensitivity to $K$.

\section{Preliminaries}
\label{sec:prelim}

\subsection{Notation and definitions}
\label{sec:nota}

We work in the Euclidean space $(\mathbb{R}^n,\langle\cdot,\cdot\rangle)$ equipped with the standard inner product. We consider problem~\eqref{pb} where $\mathcal{C}\subset\mathbb{R}^n$ is a compact convex set and $f_1,\ldots,f_m\colon\mathbb{R}^n\rightarrow\mathbb{R}$ are smooth functions. Unless declared otherwise, we will also assume that they are convex. The objective function is $f\coloneqq(1/m)\sum_{i=1}^mf_i$. For every $i,j\in\mathbb{N}$ such that $i\leq j$, the brackets $\llbracket i,j\rrbracket$ denote the set of integers between (and including) $i$ and $j$.
Let $L\coloneqq\max_{i\in\llbracket1,m\rrbracket}\max_{x,y\in\mathcal{C}}\|\nabla f_i(y)-\nabla f_i(x)\|_2$ and $G\coloneqq\max_{i\in\llbracket1,m\rrbracket}\max_{x\in\mathcal{C}}\|\nabla f_i(x)\|_2$. In particular, for all $x,y\in\mathcal{C}$, we have
\begin{align*}
 f(y)
 \leq f(x)+\langle\nabla f(x),y-x\rangle+\frac{L}{2}\|y-x\|_2^2.
\end{align*}

For all $x\in\mathbb{R}^n$ and $i\in\llbracket1,n\rrbracket$, $[x]_i$ denotes the $i$-th entry of $x$. We write $\operatorname{diag}(x)$ for the diagonal matrix $([x]_i\mathds{1}_{\{i=j\}})_{(i,j)\in\llbracket1,n\rrbracket\times\llbracket1,n\rrbracket}\in\mathbb{R}^{n\times n}$.
For all $p\in\left[1,+\infty\right]$, let $D_p\coloneqq\max_{x,y\in\mathcal{C}}\|y-x\|_p$ and $\mathcal{B}_p(\tau)\coloneqq\{x\in\mathbb{R}^n\mid\|x\|_p\leq\tau\}$. In the case of the $\ell_2$-norm, we simply write $D$ when the context is suitable. For all $A\in\mathbb{R}^{m\times n}$, let $D_p^A\coloneqq\max_{x,y\in\mathcal{C}}\|A(y-x)\|_p$ and $\|A\|_p\coloneqq\max_{\|x\|_p=1}\|Ax\|_p$. Given a symmetric matrix $H\in\mathbb{R}^{n\times n}$, we denote by $\lambda_{\min}(H)$ and $\lambda_{\max}(H)$ its smallest and largest eigenvalues. When $H$ is also positive definite, then $(x,y)\in\mathbb{R}^n\times\mathbb{R}^n\mapsto\langle x, Hy\rangle$ is an inner product with associated norm $\|\cdot\|_H\colon x\in\mathbb{R}^n\mapsto\sqrt{\langle x,Hx\rangle}$, and it holds
\begin{align}
 \lambda_{\min}(H)\|\cdot\|_2^2
 \leq\|\cdot\|_H^2
 \leq\lambda_{\max}(H)\|\cdot\|_2^2.\label{h2}
\end{align}
Lastly, we denote by $\mathsf{E}$ the expectation with respect to all the randomness in the system under study.

\subsection{Stochastic Frank-Wolfe algorithms}
\label{sec:sfw}

We present in Template~\ref{fw} the generic template for stochastic Frank-Wolfe algorithms.

\vspace{1mm}
\begin{template}[h]
\caption{Stochastic Frank-Wolfe}
\label{fw}
\begin{algorithmic}[1]
\REQUIRE Start point $x_0\in\mathcal{C}$, step-sizes $\gamma_t\in\left[0,1\right]$.
\FOR{$t=0$ \textbf{to} $T-1$}
\STATE Update the gradient estimator $\tilde{\nabla}f(x_t)$\label{tilde}
\STATE$v_t\leftarrow\argmin\limits_{v\in\mathcal{C}}\langle\tilde{\nabla}f(x_t),v\rangle$\label{lmo}
\STATE$x_{t+1}\leftarrow x_t+\gamma_t(v_t-x_t)$\label{new}
\ENDFOR
\end{algorithmic}
\end{template}
\vspace{1mm}

When $\tilde{\nabla}f(x_t)\leftarrow\nabla f(x_t)$, we obtain the original Frank-Wolfe algorithm (FW) \citep{fw56}, a.k.a.~conditional gradient algorithm \citep{polyak66cg}. The update by convex combination in Line~\ref{new} ensures that $x_{t+1}\in\mathcal{C}$ and FW thereby does not need projections back onto $\mathcal{C}$. The point $v_t\in\mathcal{C}$ is chosen in Line~\ref{lmo} so that the sequence of iterates converges, as the iterates move in the directions $v_t-x_t$ satisfying a sufficient first-order condition key to the convergence analyses: let $x^*\in\argmin_\mathcal{C}f$, then
\begin{align*}
 \langle\nabla f(x_t),v_t-x_t\rangle
 &=\langle\tilde{\nabla}f(x_t),v_t-x_t\rangle+
 \langle\nabla f(x_t)-\tilde{\nabla}f(x_t),v_t-x_t\rangle\\
 &\leq\langle\tilde{\nabla}f(x_t),x^*-x_t\rangle+
 \langle\nabla f(x_t)-\tilde{\nabla}f(x_t),v_t-x_t\rangle\\
 &=\langle\nabla f(x_t),x^*-x_t\rangle+\langle\tilde{\nabla}f(x_t)-\nabla f(x_t),x^*-v_t\rangle\\
 &\leq-(f(x_t)-f(x^*))+
 \langle\tilde{\nabla}f(x_t)-\nabla f(x_t),x^*-v_t\rangle,
\end{align*}
by convexity of $f$. The second term can be controlled by properties of the gradient estimator $\tilde{\nabla}f(x_t)$, often by using first the Cauchy-Schwarz or H\"older's inequality.

\vspace{2mm}
\begin{table}[h]
\caption{Gradient estimator updates in stochastic Frank-Wolfe algorithms. The indices $i_1,\ldots,i_{b_t}$ are sampled i.i.d.~uniformly at random from $\llbracket1,m\rrbracket$.}
\label{table:tilde}
\centering{
\begin{tabular}{lll}
\toprule
\textbf{Algorithm}&\textbf{Update $\tilde{\nabla}f(x_t)$ in Line~\ref{tilde}}&\textbf{Additional information}\\
\midrule
SFW&$\displaystyle\frac{1}{b_t}\sum_{i=i_1}^{i_{b_t}}\nabla f_i(x_t)$&$\varnothing$\\
SVRF&$\displaystyle\nabla f(\tilde{x}_t)+\frac{1}{b_t}\sum_{i=i_1}^{i_{b_t}}(\nabla f_i(x_t)-\nabla f_i(\tilde{x}_t))$&$\tilde{x}_t$ is the last snapshot iterate\\
SPIDER-FW&$\displaystyle\nabla f(\tilde{x}_t)+\frac{1}{b_t}\sum_{i=i_1}^{i_{b_t}}(\nabla f_i(x_t)-\nabla f_i(x_{t-1}))$&$\tilde{x}_t$ is the last snapshot iterate\\
ORGFW&$\displaystyle\frac{1}{b_t}\sum_{i=i_1}^{i_{b_t}}\nabla f_i(x_t)+(1-\rho_t)\left(\tilde{\nabla}f(x_{t-1})-\frac{1}{b_t}\sum_{i=i_1}^{i_{b_t}}\nabla f_i(x_{t-1})\right)$&$\rho_t$ is the momentum parameter\\
CSFW&$\displaystyle\tilde{\nabla}f(x_{t-1})+\sum_{i=i_1}^{i_{b_t}}\left(\frac{1}{m}f_i'(\langle a_i,x_t\rangle)-[\alpha_{t-1}]_i\right)a_i$&Assumes separability of $f$ as\\
&and $[\alpha_t]_i\leftarrow
\begin{cases}
(1/m)f_i'(\langle a_i,x_t\rangle) &\text{if }i\in\{i_1,\ldots,i_{b_t}\}\\
[\alpha_{t-1}]_i&\text{else}
\end{cases}
$&$\displaystyle f(x)=\frac{1}{m}\sum_{i=1}^mf_i(\langle a_i,x\rangle)$\\
\bottomrule
\end{tabular}
}
\end{table}
\vspace{1mm}

In order to fit FW to the large-scale finite-sum setting of problem~\eqref{pb}, many \emph{stochastic} Frank-Wolfe algorithms have been developed. Most of them follow Template~\ref{fw} and differ only in how they update the gradient estimator $\tilde{\nabla}f(x_t)$ (Line~\ref{tilde}). In Table~\ref{table:tilde}, we report the strategies adopted in the (vanilla) Stochastic Frank-Wolfe algorithm (SFW) \citep{hazan16}, the Stochastic Variance-Reduced Frank-Wolfe algorithm (SVRF) \citep{hazan16}, the Stochastic Path-Integrated Differential EstimatoR Frank-Wolfe algorithm (SPIDER-FW) \citep{yurtsever19,shen19}, the Online stochastic Recursive Gradient-based Frank-Wolfe algorithm (ORGFW) \citep{orgfw20}, and the Constant batch-size Stochastic Frank-Wolfe algorithm (CSFW) \citep{negiar20}. SFW is the natural extension of FW to the large-scale setting of problem~\eqref{pb}, SVRF and SPIDER-FW integrate variance reduction based on the works of \citet{svrg13} and \citet{fang18} respectively, ORGFW uses a form of momentum inspired by \citet{cutkosky19}, and CSFW takes advantage of the additive separability of the objective function in the data samples, when applicable, following the design of \citet{schmidt17}. 

\subsection{The Adaptive Gradient algorithm}
\label{sec:adagrad}

The Adaptive Gradient algorithm (AdaGrad) \citep{adagrad11} (see also \citet{mcmahan10}) is presented in Algorithm~\ref{adagrad}.

\vspace{1mm}
\begin{algorithm}[h]
\caption{Adaptive Gradient (AdaGrad)}
\label{adagrad}
\begin{algorithmic}[1]
\REQUIRE Start point $x_0\in\mathcal{C}$, offset $\delta>0$, learning rate $\eta>0$.
\FOR{$t=0$ \textbf{to} $T-1$}
\STATE Update the gradient estimator $\tilde{\nabla}f(x_t)$
\STATE$H_t\leftarrow\operatorname{diag}\left(\delta1+\sqrt{\displaystyle\sum_{s=0}^t\tilde{\nabla}f(x_s)^2}\,\right)$\label{diag}
\STATE\label{new2}$x_{t+1}\leftarrow\argmin\limits_{x\in\mathcal{C}}\eta\langle\tilde{\nabla}f(x_t),x\rangle+\displaystyle\frac{1}{2}\|x-x_t\|_{H_t}^2$
\ENDFOR
\end{algorithmic}
\end{algorithm}
\vspace{1mm}

All operations in Line~\ref{diag} are entry-wise in $\mathbb{R}^n$. The matrix $H_t\in\mathbb{R}^{n\times n}$ is diagonal and satisfies for all $i,j\in\llbracket1,n\rrbracket$,
\begin{align}
 [H_t]_{i,j}=
 \begin{cases}
  \delta+\sqrt{\displaystyle\sum_{s=0}^t[\tilde{\nabla}f(x_s)]_i^2}&\text{if }i=j,\\
  0&\text{if }i\neq j.
 \end{cases}\label{h:adagrad}
\end{align}
The default value for the offset hyperparameter is $\delta\leftarrow10^{-8}$. The new iterate $x_{t+1}$ is computed in Line~\ref{new2} by solving a constrained convex quadratic minimization subproblem. By completing the square, this subproblem is equivalent to a non-Euclidean projection in the metric $\|\cdot\|_{H_t}$:
\begin{align}
 x_{t+1}
 \leftarrow\argmin_{x\in\mathcal{C}}\|x-(x_t-\eta H_t^{-1}\tilde{\nabla}f(x_t))\|_{H_t}.\label{proj}
\end{align}
Ignoring the constraint set $\mathcal{C}$ for ease of exposition, we obtain
\begin{align*}
 x_{t+1}
 \leftarrow x_t-\eta H_t^{-1}\tilde{\nabla}f(x_t),
\end{align*}
i.e., for every feature $i\in\llbracket1,n\rrbracket$,
\begin{align}
 [x_{t+1}]_i
 \leftarrow[x_t]_i-\frac{\eta[\tilde{\nabla}f(x_t)]_i}{\delta+\sqrt{\sum_{s=0}^t[\tilde{\nabla}f(x_s)]_i^2}}.\label{i}
\end{align}
Thus, the offset $\delta$ prevents from dividing by zero, and we can see that the step-size automatically scales with the geometry of the problem. In particular, infrequent features receive large step-sizes whenever they appear, allowing the algorithm to notice these rare but potentially very informative features.

The family of adaptive gradient algorithms originated with AdaGrad and expanded with RMSProp \citep{rmsprop12}, AdaDelta \citep{adadelta12}, Adam \citep{adam15}, AMSGrad \citep{reddi18}, and, e.g., AdaBound \citep{adabound19,keskar17}, with each new variant addressing some flaws in the previous ones: vanishing step-sizes, incomplete theory, generalization performance \citep{wilson17}, etc. For example, RMSProp uses an exponential moving average instead of a sum in Line~\ref{diag} in order to avoid vanishing step-sizes, since the sum in the denominator of~\eqref{i} can grow too fast for features with dense gradients.

\section{Frank-Wolfe with adaptive gradients}
\label{sec:adafw}

\subsection{Our approach}
\label{sec:approach}

When minimizing an objective over a constraint set, each iteration of AdaGrad can be relatively expensive as it needs to solve the subproblem
\begin{align}
 \min_{x\in\mathcal{C}}\eta\langle\tilde{\nabla}f(x_t),x\rangle+\displaystyle\frac{1}{2}\|x-x_t\|_{H_t}^2,\label{sub}
\end{align}
given in Line~\ref{new2}. By~\eqref{proj}, this is equivalent to a non-Euclidean projection of the unconstrained step $x_t-\eta H_t^{-1}\tilde{\nabla}f(x_t)$. Thus, we could reduce the complexity of AdaGrad by avoiding this projection and moving in the direction of $\argmin_{v\in\mathcal{C}}\langle G_t,v\rangle$, where $-G_t=-H_t^{-1}\tilde{\nabla}f(x_t)$ denotes the unconstrained descent direction of AdaGrad, as was done in FW for gradient descent where the descent direction is $-G_t=-\nabla f(x_t)$. However, by doing so we may lose the precious properties of the descent directions of AdaGrad, as the directions returned by $\argmin_{v\in\mathcal{C}}\langle G_t,v\rangle$ can be significantly different from $-G_t$ \citep{combettes20}; see \citet[Figure~32]{polyak87} for an early illustration of the phenomenon.

Thus, instead of avoiding the subproblem~\eqref{sub}, we can consider solving it incompletely and via a projection-free algorithm. Following \citet{lan16cgs}, at each iteration we could use FW to solve~\eqref{sub} until some specified accuracy $\phi_t$ is reached, which we check via the duality gap $\max_{v\in\mathcal{C}}\langle\eta\tilde{\nabla}f(x_t)+H_t(x-x_t),v\rangle$ \citep{jaggi13fw} (Fact~\ref{fact:wolfe}). The solution to this procedure would then constitute the new iterate $x_{t+1}$. The subproblem~\eqref{sub} is easy to address since the objective is a simple convex quadratic function, so we can evaluate its exact gradient cheaply and derive the optimal step-size in any descent direction.

However, in order to provide nice theoretical analyses, the sequence of accuracies $(\phi_t)_{t\in\llbracket0,T-1\rrbracket}$ would need to decay to zero relatively fast, which means that we are back to solving the subproblems completely. This is very time-consuming and overkill in practice. Therefore, instead we propose to perform a fixed number of $K$ iterations on the subproblems, where $K$ is chosen small, e.g., $K\sim5$. Hence, we choose to leverage just a small amount of information from the adaptive metric $H_t$, and claim that this will be enough in practice.

\subsection{The algorithm}
\label{sec:algo}

We present our method via a generic template in Template~\ref{adafw}. The matrix $H_t$ is allowed to be any diagonal matrix with positive entries. Hence, we can apply the AdaGrad strategy~\eqref{h:adagrad}, but we can also apply any other variant. In our convergence analyses, we need to ensure that $\sup_{t\in\mathbb{N}}\lambda_{\max}(H_t)<+\infty$, so we propose to clip the entries of $H_t$, as done in \citet{adabound19}. Lines~\ref{sub:start}--\ref{sub:end} apply $K$ iterations of FW on
\begin{align}
 \min_{x\in\mathcal{C}}\left\{Q_t(x)\coloneqq f(x_t)+\langle\tilde{\nabla}f(x_t),x-x_t\rangle+\frac{1}{2\eta_t}\|x-x_t\|_{H_t}^2\right\}.\label{sub2}
\end{align}
This is exactly the subproblem~\eqref{sub} with a time-varying learning rate $\eta_t>0$. We denote by $y_k^{(t)}$ for $k\in\llbracket0,K\rrbracket$ the iterates on the subproblem~\eqref{sub2}, starting from $y_0^{(t)}\leftarrow x_t$ (Line~\ref{sub:start}) and ending at $x_{t+1}\leftarrow y_K^{(t)}$ (Line~\ref{new3}). The step-size $\gamma_k^{(t)}$ in Line~\ref{algo:gammak} is optimal in the sense that $\gamma_k^{(t)}=\argmin_{\gamma\in\left[0,\gamma_t\right]}Q_t(y_k^{(t)}+\gamma(v_k^{(t)}-y_k^{(t)}))$ (Lemma~\ref{lem:sub}), where the upper bound $\gamma_t$ ensures convergence of the sequence $(x_t)_{t\in\llbracket0,T\rrbracket}$. 

\vspace{1mm}
\begin{template}[h]
\caption{Frank-Wolfe with adaptive gradients}
\label{adafw}
\begin{algorithmic}[1]
\REQUIRE Start point $x_0\in\mathcal{C}$, bounds $0<\lambda_t^-\leq\lambda_{t+1}^-\leq\lambda_{t+1}^+\leq\lambda_t^+$, number of inner iterations $K\in\mathbb{N}\backslash\{0\}$, learning rates $\eta_t>0$, step-size bounds $\gamma_t\in\left[0,1\right]$.
\FOR{$t=0$ \textbf{to} $T-1$}
\STATE Update the gradient estimator $\tilde{\nabla}f(x_t)$\label{adafw:g}
\STATE Update the diagonal matrix $H_t$ and clip its entries to $[\lambda_t^-,\lambda_t^+]$\label{h}
\STATE$y_0^{(t)}\leftarrow x_t$\label{sub:start}
\FOR{$k=0$ \textbf{to} $K-1$}
\STATE$\nabla Q_t(y_k^{(t)})\leftarrow\tilde{\nabla}f(x_t)+\displaystyle\frac{1}{\eta_t}H_t(y_k^{(t)}-x_t)$\label{sub:Q}
\STATE$v_k^{(t)}\leftarrow\argmin\limits_{v\in\mathcal{C}}\langle\nabla Q_t(y_k^{(t)}),v\rangle$
\STATE$\gamma_k^{(t)}\leftarrow\min\left\{\eta_t\displaystyle\frac{\langle\nabla Q_t(y_k^{(t)}),y_k^{(t)}-v_k^{(t)}\rangle}{\|y_k^{(t)}-v_k^{(t)}\|_{H_t}^2},\gamma_t\right\}$\label{algo:gammak}
\STATE$y_{k+1}\leftarrow y_k^{(t)}+\gamma_k^{(t)}(v_k^{(t)}-y_k^{(t)})$
\ENDFOR\label{sub:end}
\STATE$x_{t+1}\leftarrow y_K^{(t)}$\label{new3}
\ENDFOR
\end{algorithmic}
\end{template}
\vspace{1mm}

In Sections~\ref{sec:adasfw}--\ref{sec:adacsfw}, we propose specific implementations of Template~\ref{adafw}, where gradients are estimated as done in SFW, SVRF, or CSFW (Table~\ref{table:tilde}). The diagonal matrices $H_t$, $t\in\llbracket0,T-1\rrbracket$, can still be very general. The derived algorithms are named AdaSFW, AdaSVRF, and AdaCSFW respectively, and we analyze their convergence rates. We follow the assumptions from Section~\ref{sec:nota}; in particular, unless declared otherwise, $f_1,\ldots,f_m$ are smooth convex functions. By clipping the entries of $H_t$, we can control the change rate of the adaptive gradients. However, allowing $H_t$ to be any diagonal matrix with positive entries comes at a price: the upper bound on the convergence rate of the objective is worse than that of the vanilla SFW for example. Nonetheless, the method converges significantly faster in practice (Section~\ref{sec:exp}).

\subsection{SFW with adaptive gradients}
\label{sec:adasfw}

We present AdaSFW in Algorithm~\ref{adasfw}. It simply estimates the gradient by averaging over a minibatch.

\vspace{1mm}
\begin{algorithm}[h]
\caption{AdaSFW}
\label{adasfw}
\begin{algorithmic}[1]
\REQUIRE Start point $x_0\in\mathcal{C}$, batch-sizes $b_t\in\mathbb{N}\backslash\{0\}$.
\FOR{$t=0$ \textbf{to} $T-1$}
\STATE$i_1,\ldots,i_{b_t}\overset{\text{i.i.d.}}{\sim}\mathcal{U}(\llbracket1,m\rrbracket)$
\STATE$\tilde{\nabla}f(x_t)\leftarrow\displaystyle\frac{1}{b_t}\sum_{i=i_1}^{i_{b_t}}\nabla f_i(x_t)$
\STATE Execute Lines~\ref{h}--\ref{new3} of Template~\ref{adafw}
\ENDFOR
\end{algorithmic}
\end{algorithm}
\vspace{1mm}

\begin{theorem}
 \label{th:adasfw}
 Consider AdaSFW (Algorithm~\ref{adasfw}) with $b_t\leftarrow\big(G(t+2)/(LD)\big)^2$, $\eta_t\leftarrow\lambda_t^-/L$, and $\gamma_t\leftarrow2/(t+2)$, and let $\kappa\coloneqq\lambda_0^+/\lambda_0^-$. Then for all $t\in\llbracket1,T\rrbracket$,
 \begin{align*}
  \mathsf{E}[f(x_t)]-\min_\mathcal{C}f
  \leq\frac{2LD^2(K+1+\kappa)}{t+1}.
 \end{align*}
\end{theorem}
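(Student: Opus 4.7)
The strategy has three moves: (i) show that $Q_t(x_{t+1})$ dominates $f(x_{t+1})$ up to a stochastic gradient error; (ii) exploit monotonicity of the inner Frank--Wolfe block on $Q_t$ to reduce its analysis to just the first inner iterate; (iii) apply LMO optimality and convexity of $f$ to derive a standard stochastic Frank--Wolfe recursion, then unroll it by induction.

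For (i), since $\eta_t=\lambda_t^-/L$ and $H_t\succeq\lambda_t^-I$, the penalty $\tfrac{1}{2\eta_t}\|\cdot-x_t\|_{H_t}^2$ dominates $\tfrac{L}{2}\|\cdot-x_t\|_2^2$, so the smoothness inequality applied at $x_{t+1}$ rearranges into
\[ f(x_{t+1})\leq Q_t(x_{t+1})+\langle \nabla f(x_t)-\tilde{\nabla}f(x_t),\,x_{t+1}-x_t\rangle. \]
For (ii), each $\gamma_k^{(t)}$ minimizes $Q_t$ along $v_k^{(t)}-y_k^{(t)}$ over $[0,\gamma_t]$, and since $\langle \nabla Q_t(y_k^{(t)}),v_k^{(t)}-y_k^{(t)}\rangle\leq 0$ by the LMO, the sequence $(Q_t(y_k^{(t)}))_{k=0}^K$ is non-increasing; in particular $Q_t(x_{t+1})\leq Q_t(y_1^{(t)})$. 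Comparing $y_1^{(t)}$ to the fixed step-size $\gamma_t$ on the segment $[x_t,v_0^{(t)}]$, and using $\|v_0^{(t)}-x_t\|_{H_t}^2\leq\lambda_t^+D^2$ together with $\lambda_t^+/\lambda_t^-\leq\kappa$, yields
\[ Q_t(y_1^{(t)})\leq f(x_t)+\gamma_t\langle \tilde{\nabla}f(x_t),v_0^{(t)}-x_t\rangle+\tfrac{\gamma_t^2 LD^2\kappa}{2}. \]
For (iii), LMO optimality $\langle \tilde{\nabla}f(x_t),v_0^{(t)}-x_t\rangle\leq \langle \tilde{\nabla}f(x_t),x^*-x_t\rangle$ and convexity of $f$ then give
\[ Q_t(x_{t+1})-f(x^*)\leq (1-\gamma_t)(f(x_t)-f(x^*))+\gamma_t\langle \tilde{\nabla}f(x_t)-\nabla f(x_t),x^*-x_t\rangle+\tfrac{\gamma_t^2 LD^2\kappa}{2}. \]

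Adding the bound from (i), the two stochastic inner products combine into $\langle \tilde{\nabla}f(x_t)-\nabla f(x_t),\,u_t\rangle$ with $u_t\coloneqq\gamma_t(x^*-x_t)-(x_{t+1}-x_t)$. The crucial (and only non-routine) observation is that each of the $K$ inner Frank--Wolfe steps moves by at most $\gamma_t D$, whence $\|x_{t+1}-x_t\|_2\leq K\gamma_t D$ and so $\|u_t\|_2\leq (K+1)\gamma_t D$ deterministically given $x_t$. Cauchy--Schwarz, $\mathsf{E}[\|\tilde{\nabla}f(x_t)-\nabla f(x_t)\|_2^2\mid x_t]\leq G^2/b_t$, and the choices $b_t=(G(t+2)/(LD))^2$, $\gamma_t=2/(t+2)$ then bound the expected error by $\tfrac{2(K+1)LD^2}{(t+2)^2}$. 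Setting $A_t\coloneqq\mathsf{E}[f(x_t)]-f(x^*)$ and $C\coloneqq 2LD^2(K+1+\kappa)$, one obtains
\[ A_{t+1}\leq\tfrac{t}{t+2}A_t+\tfrac{C}{(t+2)^2}, \]
from which $A_t\leq C/(t+1)$ for all $t\in\llbracket 1,T\rrbracket$ follows by a straightforward induction (base case: $A_1\leq C/4\leq C/2$; inductive step uses $t^2+3t+1\leq (t+1)(t+2)$).

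The main obstacle is the accounting: one must notice that for the descent estimate only $y_1^{(t)}$ matters (thanks to monotonicity of $Q_t$ along the inner iterates), while the remaining $K-1$ inner steps enter the analysis solely through the enlarged displacement $\|x_{t+1}-x_t\|_2\leq K\gamma_t D$ inside the stochastic error term. This is precisely what produces the additive $K$ in the final constant, at essentially no extra cost in the deterministic part of the bound.
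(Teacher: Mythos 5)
Your proposal is correct and follows essentially the same route as the paper's proof: the same decomposition $f(x_{t+1})\leq Q_t(x_{t+1})+\langle\nabla f(x_t)-\tilde{\nabla}f(x_t),x_{t+1}-x_t\rangle$, the same reduction via monotonicity of $Q_t$ along the inner iterates to the fixed-step point $x_t+\gamma_t(v_0^{(t)}-x_t)$, the same displacement bound $\|x_{t+1}-x_t\|_2\leq K\gamma_tD$ (the paper's Lemma~\ref{lem:x}), and the same recursion $\mathsf{E}[\epsilon_{t+1}]\leq\tfrac{t}{t+2}\mathsf{E}[\epsilon_t]+\tfrac{C}{(t+2)^2}$. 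The only cosmetic differences are that you merge the two stochastic error terms into a single inner product before applying Cauchy--Schwarz and unroll the recursion by direct induction rather than by the paper's telescoping of $(t+1)(t+2)\mathsf{E}[\epsilon_{t+1}]$; both are equivalent.
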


\begin{remark}
 \label{rem:sfw}
 Theorem~\ref{th:adasfw} simply states that we need to scale the batch-sizes as $b_t=\Theta(t^2)$. We do not need to search for the values of $G$, $L$, or $D$ in practice. The same holds for SFW \citep{hazan16}.
\end{remark}

We propose in Theorem~\ref{th:ncvx} a convergence analysis of AdaSFW on nonconvex objectives. We measure convergence via the duality gap $g\colon x\in\mathcal{C}\mapsto\max_{v\in\mathcal{C}}\langle\nabla f(x),x-v\rangle$ \citep{jaggi13fw} as done in, e.g., \citet{lj16,reddi16}. The duality gap satisfies $g(x)\geq0$, $g(x)=0$ if and only if $x$ is a stationary point, and, when $f$ is convex, $g(x)\geq f(x)-\min_\mathcal{C}f$ (Fact~\ref{fact:wolfe}).

\begin{theorem}
\label{th:ncvx}
 Suppose that $f_1,\ldots,f_m$ are not necessarily convex. Consider AdaSFW (Algorithm~\ref{adasfw}) with $b_t\leftarrow\big(G/(LD)\big)^2(t+1)$, $\eta_t\leftarrow\lambda_t^-/L$, and $\gamma_t\leftarrow1/(t+1)^{1/2+\nu}$ where $\nu\in\left]0,1/2\right[$, and let $\kappa\coloneqq\lambda_0^+/\lambda_0^-$. For all $t\in\llbracket0,T\rrbracket$, let $X_t$ be sampled uniformly at random from $\{x_0,\ldots,x_t\}$. Then for all $t\in\llbracket0,T-1\rrbracket$,
 \begin{align*}
  \mathsf{E}[g(X_t)]
  \leq\frac{(f(x_0)-\min_\mathcal{C}f)+LD^2(K+1+\kappa/2)S}{(t+1)^{1/2-\nu}},
 \end{align*}
 where $S\coloneqq\sum_{s=0}^{+\infty}1/(s+1)^{1+\nu}\in\mathbb{R}_+$. Alternatively, if the time horizon $T$ is fixed, then with $b_t\leftarrow(G/(LD))^2T$ and $\gamma_t\leftarrow1/\sqrt{T}$, 
 \begin{align*}
  \mathsf{E}[g(X_{T-1})]
  \leq\frac{(f(x_0)-\min_\mathcal{C}f)+LD^2(K+1+\kappa/2)}{\sqrt{T}}.
 \end{align*}
\end{theorem}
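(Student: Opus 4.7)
The plan is to derive a per-iteration descent inequality of the form $\gamma_t\mathsf{E}[g(x_t)] \leq \mathsf{E}[f(x_t) - f(x_{t+1})] + \varepsilon_t$ with $\sum_t\varepsilon_t$ bounded, and then, since $\gamma_s$ is decreasing in $s$, use $\sum_{s=0}^t \gamma_s \mathsf{E}[g(x_s)] \geq \gamma_t(t+1)\mathsf{E}[g(X_t)]$ to extract the uniformly sampled gap. The factor $(t+1)\gamma_t = (t+1)^{1/2-\nu}$ produces the claimed exponent, so the choice $\gamma_t = 1/(t+1)^{1/2+\nu}$ is precisely the one that balances this against the accumulated errors.

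For the one-step inequality, I would start from $L$-smoothness of $f$, split $\nabla f(x_t) = \tilde{\nabla}f(x_t) + (\nabla f(x_t) - \tilde{\nabla}f(x_t))$ in the linear term, and bound $\frac{L}{2}\|\cdot\|_2^2 \leq \frac{1}{2\eta_t}\|\cdot\|_{H_t}^2$ (using $\eta_t = \lambda_t^-/L$ and~\eqref{h2}) to recognize the right-hand side as $Q_t(x_{t+1})$ plus a residual, obtaining $f(x_{t+1}) \leq Q_t(x_{t+1}) + \langle\nabla f(x_t) - \tilde{\nabla}f(x_t), x_{t+1} - x_t\rangle$. Because the inner step-sizes are optimal in $[0,\gamma_t]$ (Lemma~\ref{lem:sub}), $Q_t$ is nonincreasing along $y_0^{(t)},\ldots,y_K^{(t)}$, so $Q_t(x_{t+1}) \leq Q_t(y_1^{(t)})$; the standard Frank-Wolfe progress inequality applied at $k=0$, combined with $\|v_0^{(t)} - x_t\|_{H_t}^2 \leq \lambda_t^+ D^2$ and $\lambda_t^+/\lambda_t^- \leq \kappa$, yields $Q_t(y_1^{(t)}) \leq f(x_t) - \gamma_t\langle\tilde{\nabla}f(x_t), x_t - v_0^{(t)}\rangle + \tfrac{1}{2}\gamma_t^2 L\kappa D^2$. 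Letting $v_t^* \in \argmin_\mathcal{C}\langle\nabla f(x_t),\cdot\rangle$, the defining property of $v_0^{(t)}$ gives $\langle\tilde{\nabla}f(x_t), x_t - v_0^{(t)}\rangle \geq g(x_t) - \langle\nabla f(x_t) - \tilde{\nabla}f(x_t), x_t - v_t^*\rangle$. Plugging in, taking expectation, and applying Cauchy--Schwarz with $\|x_t - v_t^*\|_2 \leq D$ and $\|x_{t+1} - x_t\|_2 \leq K\gamma_t D$ (triangle inequality over the $K$ inner steps, each of length at most $\gamma_t D$), together with the minibatch variance bound $\mathsf{E}[\|\nabla f(x_t) - \tilde{\nabla}f(x_t)\|_2] \leq G/\sqrt{b_t} = LD/\sqrt{t+1}$, produces
\[
 \gamma_t\mathsf{E}[g(x_t)] \leq \mathsf{E}[f(x_t) - f(x_{t+1})] + \frac{(K+1)LD^2\gamma_t}{\sqrt{t+1}} + \frac{\gamma_t^2 L\kappa D^2}{2}.
\]
Substituting $\gamma_t = 1/(t+1)^{1/2+\nu}$ makes both error terms bounded by $(K + 1 + \kappa/2)LD^2/(t+1)^{1+\nu}$ (using $1+2\nu \geq 1+\nu$). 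Summing from $s = 0$ to $t$ telescopes the $f$-difference to at most $f(x_0) - \min_\mathcal{C} f$ and bounds the remaining sum by $(K + 1 + \kappa/2)LD^2 S$; dividing by $(t+1)\gamma_t = (t+1)^{1/2-\nu}$ gives the first bound. The fixed-horizon variant follows the same argument with constant $\gamma_t \equiv 1/\sqrt{T}$ and $b_t \equiv (G/(LD))^2 T$, summing from $0$ to $T-1$ and dividing by $T\gamma_T = \sqrt{T}$.

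The main obstacle is cleanly reducing the $K$-iteration inner loop to a single outer-step descent: one must invoke monotonicity of $Q_t$ to collapse the inner progress down to just the first-step bound on $Q_t(y_1^{(t)})$, while separately retaining a factor of $K$ in the bound $\|x_{t+1} - x_t\|_2 \leq K\gamma_t D$ that enters the gradient-noise residual. Once this reduction is secured, the remaining work---selecting $\nu \in (0, 1/2)$ to balance step-size, batch-size, and error exponents---is routine.
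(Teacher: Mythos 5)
Your proposal is correct and follows essentially the same route as the paper's proof: the same smoothness-plus-$\|\cdot\|_{H_t}$ comparison to reach $Q_t$, the same use of Lemma~\ref{lem:sub} to reduce the $K$ inner steps to the first Frank--Wolfe step, the same decomposition against $w_t\in\argmin_{\mathcal{C}}\langle\nabla f(x_t),\cdot\rangle$ to surface $g(x_t)$, the bound $\|x_{t+1}-x_t\|_2\leq K\gamma_tD$ from Lemma~\ref{lem:x}, and the final summation with $\sum_s\gamma_s\mathsf{E}[g(x_s)]\geq\gamma_t(t+1)\mathsf{E}[g(X_t)]$. The constants and both the decaying-step and fixed-horizon cases match the paper's argument.
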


\begin{remark}
 In the first setting of Theorem~\ref{th:ncvx}, if $\nu\leftarrow0.05$ for example, then $\mathsf{E}[g(X_t)]=\mathcal{O}(1/t^{0.45})$ and $S\approx20.6$.
\end{remark}

\subsection{SVRF with adaptive gradients}
\label{sec:adasvrf}

We present AdaSVRF in Algorithm~\ref{adasvrf}. At every iteration $t=s_k$, $k\in\mathbb{N}$, it computes the exact gradient of the iterate, saves it into memory, then builds the gradient estimator $\tilde{\nabla}f(x_t)$ in the following iterations $t\in\llbracket s_k+1,s_{k+1}-1\rrbracket$ from this snapshot. Compared to AdaSFW, the variance $\mathsf{E}[\|\tilde{\nabla}f(x_t)-\nabla f(x_t)\|_2^2]$ of the estimator is effectively reduced. The snapshot iterate for $x_t$ is denoted by $\tilde{x}_t$.

\begin{theorem}
 \label{th:adasvrf}
 Consider AdaSVRF (Algorithm~\ref{adasvrf}) with $s_k\leftarrow2^k-1$, $b_t\leftarrow24(K+1+\kappa)(t+2)$ where $\kappa\coloneqq\lambda_0^+/\lambda_0^-$, $\eta_t\leftarrow\lambda_t^-/L$, and $\gamma_t\leftarrow2/(t+2)$. Then for all $t\in\llbracket1,T\rrbracket$,
 \begin{align*}
  \mathsf{E}[f(x_t)]-\min_\mathcal{C}f
  \leq\frac{2LD^2(K+1+\kappa)}{t+2}.
 \end{align*}
\end{theorem}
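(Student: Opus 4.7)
The plan is to mirror the proof of Theorem~\ref{th:adasfw} for AdaSFW, replacing the crude stochastic gradient bound $G^2/b_t$ by the sharper SVRG-style variance bound that underlies SVRF, which is what enables the batch-size schedule to grow only linearly rather than quadratically in $t$. First, I would derive the one-step recurrence exactly as in the AdaSFW case, by combining: (i) the smoothness of $f$ with the key metric inequality $L\|\cdot\|_2^2 \leq (1/\eta_t)\|\cdot\|_{H_t}^2$, valid since $\eta_t = \lambda_t^-/L$ and the clipping ensures $H_t \succeq \lambda_t^- I$, which passes from $f(x_{t+1})$ to $Q_t(x_{t+1})$ plus a gradient-error cross term; (ii) monotone decrease of $Q_t$ across the $K$ inner FW iterations combined with the LMO optimality of $v_0^{(t)}$, which yields $Q_t(x_{t+1}) \leq Q_t(y_1^{(t)}) \leq f(x_t) + \gamma_t \langle \tilde{\nabla}f(x_t), x^*-x_t\rangle + (L\kappa/2)\gamma_t^2 D^2$; and (iii) convexity of $f$ to rewrite $\langle \nabla f(x_t), x^*-x_t\rangle \leq -(f(x_t)-f(x^*))$. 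Taking expectations gives
\begin{align*}
\mathsf{E}[f(x_{t+1})] - f(x^*) \leq (1-\gamma_t)\bigl(\mathsf{E}[f(x_t)] - f(x^*)\bigr) + \frac{L\kappa}{2}\gamma_t^2 D^2 + \mathsf{E}[R_t],
\end{align*}
with $R_t = \langle \tilde{\nabla}f(x_t) - \nabla f(x_t),\, \gamma_t(x^*-x_t) - (x_{t+1}-x_t)\rangle$.

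Second, I would bound $\mathsf{E}[R_t]$ by exploiting two properties specific to AdaSVRF. By unbiasedness of the SVRG estimator, the $\gamma_t(x^*-x_t)$ component of $R_t$ vanishes in conditional expectation since $x_t$ and $x^*$ are measurable with respect to the past. The remaining part is controlled by Cauchy--Schwarz together with the deterministic inner-loop bound $\|x_{t+1} - x_t\|_2 \leq K\gamma_t D$, and then by Jensen's inequality via the SVRG variance estimate
\begin{align*}
\mathsf{E}[\|\tilde{\nabla}f(x_t) - \nabla f(x_t)\|_2^2] \leq \frac{L^2 \|x_t - \tilde{x}_t\|_2^2}{b_t},
\end{align*}
which follows from i.i.d.\ minibatch sampling and $L$-smoothness of each $f_i$. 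The geometric snapshot schedule $s_k = 2^k - 1$ ensures that $\|x_t - \tilde{x}_t\|_2$ stays bounded by a fixed multiple of $D$ within each epoch, so that the variance estimate is effective throughout.

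Finally, I would close the induction with hypothesis $\mathsf{E}[f(x_t)] - f(x^*) \leq 2LD^2(K+1+\kappa)/(t+2)$: substituting $\gamma_t = 2/(t+2)$ and $b_t = 24(K+1+\kappa)(t+2)$ into the recurrence and checking that the noise contribution is absorbed by the target rate reduces to the same arithmetic verification that concludes the AdaSFW proof. The main obstacle I expect is precisely this last step, where $\mathsf{E}[R_t]$ must be shown to fit within the $\mathcal{O}(1/(t+2)^2)$ per-step error budget dictated by the induction: a naive Cauchy--Schwarz-plus-Jensen bound yields only $\mathcal{O}(1/t^{3/2})$ because the SVRG variance decays as $1/b_t = \mathcal{O}(1/t)$. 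Overcoming this requires a more careful Young-type splitting tuned to the scaling of $b_t$, so that the squared gradient-error term can be absorbed into the quadratic curvature term rather than incurring a $\sqrt{b_t}$-denominator loss. The multiplicative constant $24$ and the factor $K+1+\kappa$ appearing in both $b_t$ and the target rate $C$ are chosen so that this balancing closes the induction cleanly.
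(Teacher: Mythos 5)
Your one-step recurrence and the overall induction scheme match the paper's proof, and your observation that the snapshot schedule $s_k=2^k-1$ keeps the snapshot ``recent'' is on the right track. However, there is a genuine gap at the heart of the argument: the variance bound you invoke, $\mathsf{E}[\|\tilde{\nabla}f(x_t)-\nabla f(x_t)\|_2^2]\leq L^2\|x_t-\tilde{x}_t\|_2^2/b_t$, is too weak to close the induction, and the ``Young-type splitting'' you propose as a repair does not work. Within an epoch one only gets $\|x_t-\tilde{x}_t\|_2=\mathcal{O}(KD)$ (a constant, via Lemma~\ref{lem:x} and summing $\gamma_s$ over the epoch), so with $b_t=\Theta(t)$ your squared error is $\mathcal{O}(1/t)$. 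Any splitting $\gamma_t\|\delta_t\|_2D\leq\tfrac{1}{2\beta}\|\delta_t\|_2^2+\tfrac{\beta}{2}\gamma_t^2D^2$ then forces a choice: $\beta=\mathcal{O}(1)$ leaves the first term at $\mathcal{O}(1/t)$, while $\beta\to\infty$ inflates the second beyond $\mathcal{O}(1/t^2)$; either way you exceed the $\mathcal{O}(1/(t+2)^2)$ per-step budget that the $\mathcal{O}(1/t)$ rate demands. You correctly diagnosed the $\mathcal{O}(1/t^{3/2})$ obstruction but the proposed cure cannot overcome it.

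The missing ingredient is the paper's Lemma~\ref{lem:svrf} (adapted from Hazan--Luo), which bounds the variance by \emph{function-value suboptimality} rather than by iterate distance:
\begin{align*}
 \mathsf{E}[\|\tilde{\nabla}f(x_t)-\nabla f(x_t)\|_2^2]
 \leq\frac{4L}{b_t}\left(\mathsf{E}[f(x_t)-\min_\mathcal{C}f]+\mathsf{E}[f(\tilde{x}_t)-\min_\mathcal{C}f]\right),
\end{align*}
obtained by centering at $\nabla f_i(x^*)$ and using convexity plus $L$-smoothness of each $f_i$. This is where the \emph{strong} induction hypothesis earns its keep: it is applied at both $t$ and the snapshot time $s_k$, and the geometric schedule gives $t+2\leq2(s_k+2)$, so both suboptimality gaps are $\mathcal{O}(1/(t+2))$. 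With $b_t=24(K+1+\kappa)(t+2)$ the variance becomes exactly $(LD/(t+2))^2$, plain Jensen gives a first-moment error of $LD/(t+2)$, and the recurrence closes with no splitting tricks at all. Without this lemma --- or some equivalent mechanism that makes the variance decay like $1/t^2$ rather than $1/t$ --- your induction cannot be completed.
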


\vspace{1mm}
\begin{algorithm}[H]
\caption{AdaSVRF}
\label{adasvrf}
\begin{algorithmic}[1]
\REQUIRE Start point $x_0\in\mathcal{C}$, snapshot times $s_k<s_{k+1}$ with $s_0=0$, batch-sizes $b_t\in\mathbb{N}\backslash\{0\}$.
\FOR{$t=0$ \textbf{to} $T-1$}
\IF{$t\in\{s_k\mid k\in\mathbb{N}\}$}
\STATE$\tilde{x}_t\leftarrow x_t$
\STATE$\tilde{\nabla}f(x_t)\leftarrow\nabla f(\tilde{x}_t)$\label{adagradfw:shot}
\ELSE
\STATE$\tilde{x}_t\leftarrow\tilde{x}_{t-1}$
\STATE$i_1,\ldots,i_{b_t}\overset{\text{i.i.d.}}{\sim}\mathcal{U}(\llbracket1,m\rrbracket)$
\STATE$\tilde{\nabla}f(x_t)\leftarrow\nabla f(\tilde{x}_t)+\displaystyle\frac{1}{b_t}\sum_{i=i_1}^{i_{b_t}}(\nabla f_i(x_t)-\nabla f_i(\tilde{x}_t))$
\ENDIF
\STATE Execute Lines~\ref{h}--\ref{new3} of Template~\ref{adafw}
\ENDFOR
\end{algorithmic}
\end{algorithm}
\vspace{1mm}

\begin{remark}
 \label{rem:svrf}
 Consider the setting of Theorem~\ref{th:adasvrf} with the more general strategy $s_k\leftarrow2^{k+k_0}-2^{k_0}$ and $b_t\leftarrow8(2^{k_0+1}+1)(K+1+\kappa)(t+2)$ where $k_0\in\mathbb{N}$. Then the same result holds. Choosing $k_0>0$ is useful in practice to avoid computing exact gradients too many times in the early iterations.
\end{remark}

\subsection{CSFW with adaptive gradients}
\label{sec:adacsfw}

Here we assume the objective function to be additively separable in the data samples. The problem is
\begin{align*}
 \min_{x\in\mathcal{C}}\left\{f(x)\coloneqq\frac{1}{m}\sum_{i=1}^mf_i(\langle a_i,x\rangle)\right\},
\end{align*}
where $f_1,\ldots,f_m\colon\mathbb{R}\rightarrow\mathbb{R}$ are smooth convex functions and $a_1,\ldots,a_m\in\mathbb{R}^n$ are the data samples. Thus, $\nabla f(x)=(1/m)\sum_{i=1}^mf_i'(\langle a_i,x\rangle)a_i$. We present AdaCSFW in Algorithm~\ref{adacsfw}. It estimates the gradient with the quantity $\tilde{\nabla}f(x_t)=\sum_{i=1}^m[\alpha_t]_ia_i$ by iteratively updating entries of the vector $\alpha_t\in\mathbb{R}^m$.

\vspace{1mm}
\begin{algorithm}[h]
\caption{AdaCSFW}
\label{adacsfw}
\begin{algorithmic}[1]
\REQUIRE Start point $x_0\in\mathcal{C}$, batch-size $b\in\mathbb{N}\backslash\{0\}$.
\STATE$\alpha_{-1}\leftarrow0\in\mathbb{R}^m$
\STATE$\tilde{\nabla}f(x_{-1})\leftarrow0\in\mathbb{R}^n$
\FOR{$t=0$ \textbf{to} $T-1$}
\STATE$i_1,\ldots,i_b\overset{\text{i.i.d.}}{\sim}\mathcal{U}(\llbracket1,m\rrbracket)$
\FOR{$i=1$ \textbf{to} $m$}
\IF{$i\in\{i_1,\ldots,i_b\}$}
\STATE$[\alpha_t]_i\leftarrow\displaystyle\frac{1}{m}f_i'(\langle a_i,x_t\rangle)$
\ELSE
\STATE$[\alpha_t]_i\leftarrow[\alpha_{t-1}]_i$
\ENDIF
\ENDFOR
\STATE$\tilde{\nabla}f(x_t)\leftarrow\tilde{\nabla}f(x_{t-1})+\displaystyle\sum_{i=i_1}^{i_b}([\alpha_t]_i-[\alpha_{t-1}]_i)a_i$
\STATE Execute Lines~\ref{h}--\ref{new3} of Template~\ref{adafw}
\ENDFOR
\end{algorithmic}
\end{algorithm}
\vspace{1mm}

\begin{theorem}
 \label{th:adacsfw}
 Consider AdaCSFW (Algorithm~\ref{adacsfw}) with $\eta_t\leftarrow m\lambda_t^-/(L\|A\|_2^2)$ and $\gamma_t\leftarrow2/(t+2)$, and let $\kappa\coloneqq\lambda_0^+/\lambda_0^-$. Then for all $t\in\llbracket1,T\rrbracket$,
 \begin{align*}
  \mathsf{E}[f(x_t)]-\min_\mathcal{C}f
  &\leq\frac{2L}{t+1}\left(4K(K+1)D_1^AD_\infty^A\left(\frac{1}{b}-\frac{1}{m}\right)+\frac{\kappa\|A\|_2^2D_2^2}{m}\right)\\
  &\quad+\frac{2(K+1)D_\infty^A(m/b)^2}{t(t+1)}\left(\|f'(x_0)-\alpha_0\|_1+\frac{16KLD_1^A}{b}\right).
 \end{align*}
\end{theorem}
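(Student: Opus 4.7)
The plan is to adapt the proof of Theorem~\ref{th:adasfw} to the CSFW estimator. The main twist is that the variance of $\tilde{\nabla}f(x_t)$ must be tracked through the memoryful coefficient vector $\alpha_t\in\mathbb{R}^m$ (with $\tilde{\nabla}f(x_t)=A^\top\alpha_t$), of which only roughly $b$ entries are refreshed per outer step. First I would establish the per-iteration inequality
\[f(x_{t+1})-Q_t(x_{t+1})\le\langle\nabla f(x_t)-\tilde{\nabla}f(x_t),x_{t+1}-x_t\rangle;\]
this uses that $f$ is $(L\|A\|_2^2/m)$-smooth and that the choice $\eta_t=m\lambda_t^-/(L\|A\|_2^2)$ forces the regularizer $\|\cdot\|_{H_t}^2/(2\eta_t)$ to dominate the smoothness residual $(L\|A\|_2^2/(2m))\|\cdot\|_2^2$. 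Exactly as for AdaSFW, telescoping the $K$ inner FW iterations with the line-searched step-sizes from Lemma~\ref{lem:sub}, comparing against $x^*\in\argmin_\mathcal{C}f$ via convexity of $f$, and applying the H\"older pairing $|\langle A^\top(\alpha_t-f'(x_t)),y\rangle|\le\|\alpha_t-f'(x_t)\|_1\,\|Ay\|_\infty\le D_\infty^A\|\alpha_t-f'(x_t)\|_1$ (where $f'(x_t)\in\mathbb{R}^m$ denotes the oracle vector with entries $(1/m)f_i'(\langle a_i,x_t\rangle)$) should yield a one-step contraction of the form
\[\mathsf{E}[f(x_{t+1})-\textstyle\min_\mathcal{C}f]\le(1-\gamma_t)\,\mathsf{E}[f(x_t)-\min_\mathcal{C}f]+\frac{\gamma_t^2\,\kappa L\|A\|_2^2 D_2^2}{2m}+K\gamma_t D_\infty^A\,\mathsf{E}\|\alpha_t-f'(x_t)\|_1.\]

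The second step is to recursively control $\mathsf{E}\|\alpha_t-f'(x_t)\|_1$. Conditioning on whether index $i$ is sampled at step $t$ and using smoothness of $f_i$ to bound the stale component via $|[\alpha_{t-1}]_i-(1/m)f_i'(\langle a_i,x_t\rangle)|\le|[\alpha_{t-1}-f'(x_{t-1})]_i|+(L/m)|\langle a_i,x_t-x_{t-1}\rangle|$, I obtain
\[\mathsf{E}\|\alpha_t-f'(x_t)\|_1\le(1-b/m)\,\mathsf{E}\|\alpha_{t-1}-f'(x_{t-1})\|_1+\frac{L}{m}\,\mathsf{E}\|A(x_t-x_{t-1})\|_1.\]
The crucial quantitative input is that $x_t-x_{t-1}$ is the composition of $K$ inner FW moves, each of $A$-diameter at most $D_1^A$, so $\|A(x_t-x_{t-1})\|_1\le K\gamma_{t-1}D_1^A$; this is where the $K$-factors in the theorem first appear. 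Unrolling introduces a geometric tail summable by $\sum_{k\ge 0}(1-b/m)^k=m/b$, giving a steady-state bound of order $LK D_1^A\gamma_{t-1}/b$ plus an exponentially decaying initialization term $(1-b/m)^t\|\alpha_0-f'(x_0)\|_1$.

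Substituting back into the one-step contraction and inducting with $\gamma_t=2/(t+2)$ splits the error into three parts matching the statement: the $\gamma_t^2\kappa L\|A\|_2^2 D_2^2/m$ contribution telescopes to the $\kappa\|A\|_2^2 D_2^2/(m(t+1))$ term; the steady-state CSFW contribution yields the $4K(K+1)D_1^A D_\infty^A(1/b-1/m)/(t+1)$ term; and the initialization tail, amplified by a second summation over outer iterations that once more introduces an $m/b$ factor, produces the $(m/b)^2/(t(t+1))$ term proportional to $\|f'(x_0)-\alpha_0\|_1$. The main obstacle is the bookkeeping across the two coupled error sources: the inner-loop analysis of $Q_t$ naturally lives in the $H_t$-Euclidean geometry, whereas the CSFW estimator is tamed in $\ell_1/\ell_\infty$ via H\"older, and the double summation $(\sum_k(1-b/m)^k)^2=(m/b)^2$ multiplying the initialization error has to be tracked carefully to recover the precise constants.
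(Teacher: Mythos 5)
Your proposal follows essentially the same route as the paper's proof: the $(L/m)$-smoothness of the separable objective composed with $A$, domination of the quadratic residual by $\|\cdot\|_{H_t}^2/(2\eta_t)$ via the choice of $\eta_t$, Lemma~\ref{lem:sub} for the $K$ inner iterations, the H\"older $\ell_1/\ell_\infty$ pairing of $f'(x_t)-\alpha_t$ against $A(x_{t+1}-x_t)$, the geometric recursion for $\mathsf{E}[\|f'(x_t)-\alpha_t\|_1]$ driven by $\|A(x_t-x_{t-1})\|_1\leq K\gamma_{t-1}D_1^A$ (the paper's Lemma~\ref{lem:csfw}), and the $(t+1)(t+2)$ telescoping. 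The only nit is that your displayed one-step contraction should carry the coefficient $(K+1)\gamma_tD_\infty^A$ rather than $K\gamma_tD_\infty^A$, since the convexity comparison against $x^*$ contributes an additional $\gamma_tD_\infty^A\,\mathsf{E}[\|f'(x_t)-\alpha_t\|_1]$, which you do in any case account for in the final constants.
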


\subsection{Practical recommendations}
\label{sec:practice}

We end this section with some practical recommendations. Following Remark~\ref{rem:svrf}, for convex objectives (Section~\ref{sec:cvx}) we set $k_0\leftarrow4$ in SVRF and AdaSVRF; for nonconvex objectives (Section~\ref{sec:ncvx}), we took snapshots once per epoch. In all variants of Template~\ref{adafw}, we used the AdaGrad strategy~\eqref{h:adagrad} for the adaptive metric $H_t$. We did not need to clip its entries. The offset was set to the default value $\delta\leftarrow10^{-8}$. We picked a constant value for the learning rate $\eta_t$, tuned in the range $\{10^{i/2}\mid i\in\mathbb{Z}\}$ by starting from $\{10^{i/2}\mid i\in\{-2,0,2\}\}$ and then narrowing the search space to $\{10^{i/2}\mid i\in\{i_\text{best}-1,i_\text{best},i_\text{best}+1\}\}$, and extending it if the new $i_\text{best}$ is at an endpoint. We did not need to bound the step-sizes $\gamma_k^{(t)}$, i.e., we set $\gamma_t\leftarrow1$. Either way, we noticed that the bounds $\gamma_t$ obtained from the theoretical analyses were not active in our experiments. Lastly, we found $K\sim5$ to be a good default value in general, as it provides both low complexity and high performance. A sensitivity analysis is available in Appendix~\ref{apx:K}.

\section{Computational experiments}
\label{sec:exp}

In this section, we conduct a computational study of our proposed method. We compare it to the state-of-the-art stochastic Frank-Wolfe algorithms, SFW, SVRF \citep{hazan16}, SPIDER-FW \citep{yurtsever19,shen19}, ORGFW \citep{orgfw20}, and CSFW \citep{negiar20}, as well as to the adaptive gradient algorithms AdaGrad \citep{adagrad11,mcmahan10} and AMSGrad \citep{reddi18}, which are usually applied to unconstrained problems in the literature. We chose AMSGrad as it solves the non-convergence issue of Adam \citep{adam15}. The goal is to demonstrate:
\begin{enumerate}[label=(\roman*)]
 \item that our method improves the performance of projection-free algorithms by blending in adaptive gradients, and
 \item that it can also be viewed as an efficient adaptive gradient method for constrained optimization, by being projection-free.
\end{enumerate}

At each iteration, AdaGrad and AMSGrad both require a non-Euclidean projection onto the constraint set $\mathcal{C}$. When $\mathcal{C}$ is an $\ell_1$-ball, we compute the projection as proposed in \citet[Section~5.2]{adagrad11}\footnote{In \citet[Figure~3]{adagrad11}, the \texttt{if} statement in Line~2 should be on the condition ``$\sum_ia_iv_i\leq c$'' instead of ``$\sum_iv_i\leq c$''.}. For both algorithms, the learning rate $\eta$ is tuned as explained in Section~\ref{sec:practice}.

\subsection{Convex objectives}
\label{sec:cvx}

We compare the algorithms on three standard convex optimization problems. We apply Template~\ref{adafw} to the best performing variant, demonstrating its flexibility and consistent performance. The performance of each algorithm is evaluated via the duality gap $\max_{v\in\mathcal{C}}\langle\nabla f(x_t),x_t-v\rangle$. When $\min_\mathcal{C}f$ is unknown, the duality gap serves as a measure of convergence and as a stopping criterion (Fact~\ref{fact:wolfe}). For the batch-sizes, we follow the recommendations given by the theoretical analyses of the respective algorithms. In SFW and SVRF, by Remark~\ref{rem:sfw} we set $b_t\sim t^2/\sqrt{m}$ and $b_t\sim t$ respectively, making sure $b_t$ does not grow too fast and stays small compared to the full batch-size $m$. We have $b_t\leftarrow\max\{2^k\mid t+1\geq2^k, k\in\mathbb{N}\}$ in SPIDER-FW, and $b_t\leftarrow\floor{m/100}$ in ORGFW, CSFW, AdaGrad, and AMSGrad, following \citet{negiar20} for algorithms where the batch-sizes do not need to grow over time.

\paragraph{Support vector classification.}\label{para:svm} We start with a support vector classification experiment from \citet[Equation~(4.3.11)]{duchi18}. Since our work only deals with smooth objective functions\footnote{For Frank-Wolfe on nonsmooth objectives, see \citet{argyriou14}.}, we smoothen the hinge loss by taking its square, as done in, e.g., \citet{zhang01}. The problem is
\begin{align*}
 \min_{x\in\mathbb{R}^n}\;&\frac{1}{m}\sum_{i=1}^m\max\{0,1-y_i\langle a_i,x\rangle\}^2\\
 \text{s.t.}\;&\|x\|_\infty\leq\tau,
\end{align*}
where the data is generated as follows. For every $(i,j)\in\llbracket1,m\rrbracket\times\llbracket1,n\rrbracket$, let $a_{i,j}=0$ with probability $1-1/j$, else $a_{i,j}=\pm1$ equiprobably. Thus, the data matrix $A\in\mathbb{R}^{m\times n}$ has significant variability in the frequency of the features. Then let $u\sim\mathcal{U}(\{-1,1\}^n)$, and $y_i=\operatorname{sign}(\langle a_i,u\rangle)$ with probability $0.95$ else $y_i=-\operatorname{sign}(\langle a_i,u\rangle)$. We have $m=20\,000$, $n=1\,000$ and $\tau=1$. We set $K\leftarrow2$ and $\eta\leftarrow10^{-3/2}$ in AdaCSFW, $\eta\leftarrow10^{-1}$ in AdaGrad, and $\eta\leftarrow10^{-2}$ in AMSGrad. The results are presented in Figure~\ref{fig:svm}.

\begin{figure}[h]
\vspace{2mm}
\centering{\includegraphics[scale=0.6]{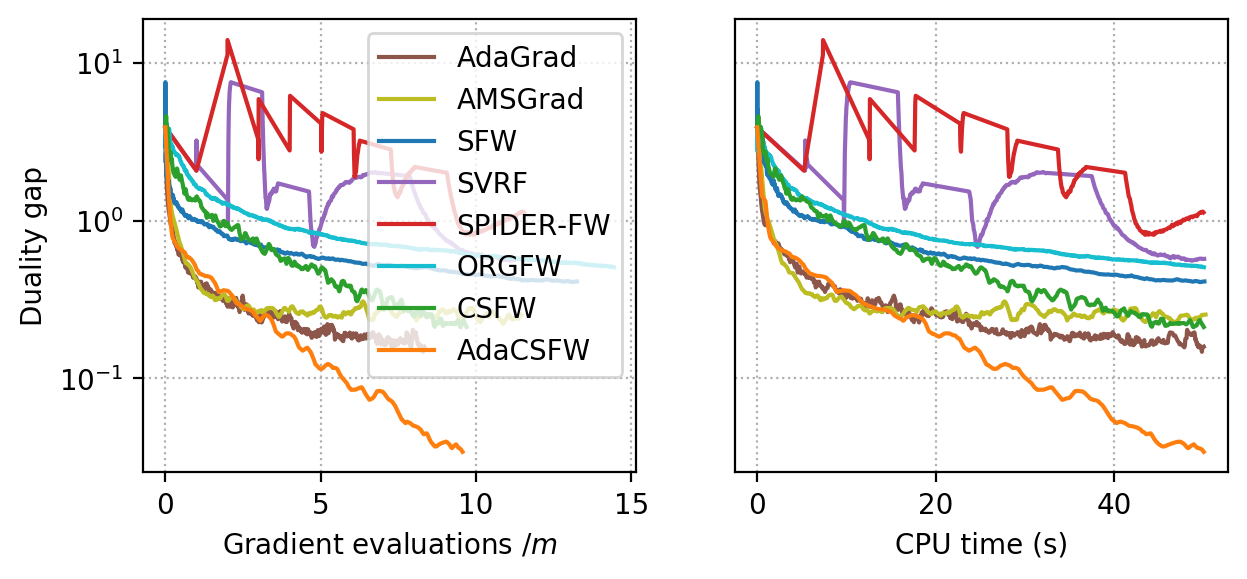}}
\caption{Support vector classification on a synthetic dataset.}
\label{fig:svm}
\end{figure}

\paragraph{Linear regression.}\label{para:lin} We consider a linear regression experiment on the YearPredictionMSD dataset \citep{bertin11}, available at \url{https://archive.ics.uci.edu/ml/datasets/YearPredictionMSD}. The goal is to predict the release years $y_1,\ldots,y_m$ of songs from their audio features $a_1,\ldots,a_m\in\mathbb{R}^n$. We include a sparsity-inducing constraint via the $\ell_1$-norm:
\begin{align*}
\min_{x\in\mathbb{R}^n}\;&\frac{1}{m}\sum_{i=1}^m(y_i-\langle a_i,x\rangle)^2\\
\text{s.t.}\;&\|x\|_1\leq100.
\end{align*}
We have $m=463\,715$ and $n=90$. We set $K\leftarrow2$ and $\eta\leftarrow10^{1/2}$ in AdaSVRF, $\eta\leftarrow10^{-1/2}$ in AdaGrad, and $\eta\leftarrow10^{-3/2}$ in AMSGrad. The results are presented in Figure~\ref{fig:lin}.

\begin{figure}[H]
\vspace{2mm}
\centering{\includegraphics[scale=0.6]{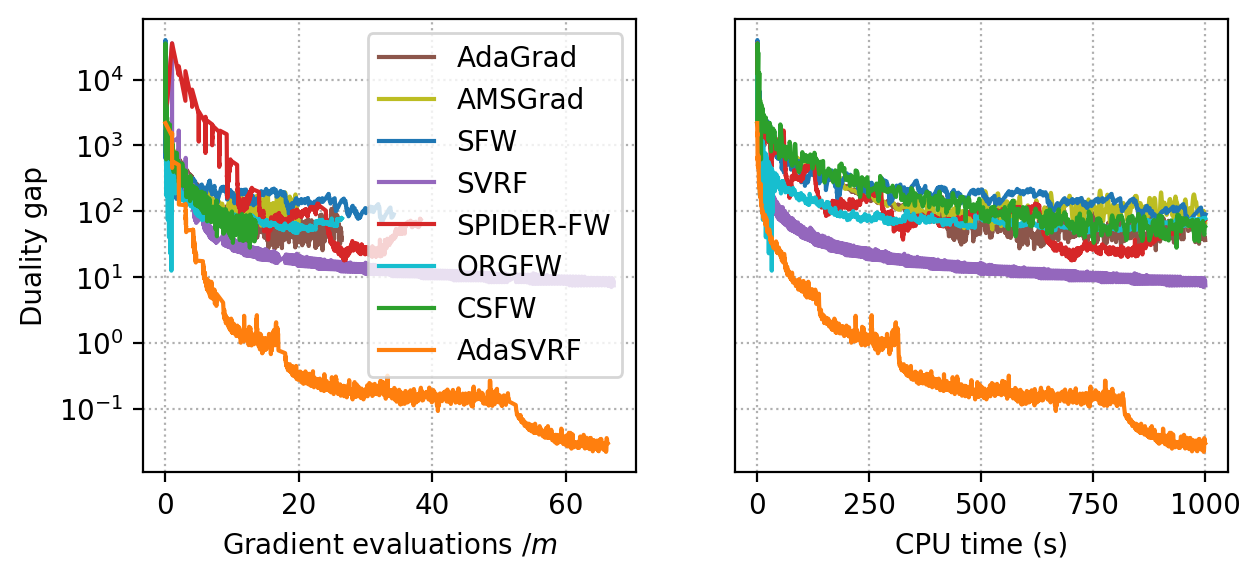}}
\caption{Linear regression on the YearPredictionMSD dataset.}
\label{fig:lin}
\end{figure}

\paragraph{Logistic regression.}\label{para:log} We consider a text categorization experiment on the RCV1 dataset \citep{lewis04}. We use the preprocessed version for binary classification from the LIBSVM library \citep{libsvm}, available at \url{https://www.csie.ntu.edu.tw/~cjlin/libsvmtools/datasets/binary.html#rcv1.binary}, and adopt a logistic regression model with a sparsity-inducing constraint via the $\ell_1$-norm:
\begin{align*}
\min_{x\in\mathbb{R}^n}\;&\frac{1}{m}\sum_{i=1}^m\ln(1+\exp(-y_i\langle a_i,x\rangle))\\
\text{s.t.}\;&\|x\|_1\leq100,
\end{align*}
where $y_i\in\{-1,+1\}$. We have $m=20\,242$ and $n=47\,236$. We set $K\leftarrow5$ and $\eta\leftarrow10^2$ in AdaCSFW, $\eta\leftarrow1$ in AdaGrad, and $\eta\leftarrow10^{-1}$ in AMSGrad. The results are presented in Figure~\ref{fig:log}.

\begin{figure}[h]
\vspace{2mm}
\centering{\includegraphics[scale=0.6]{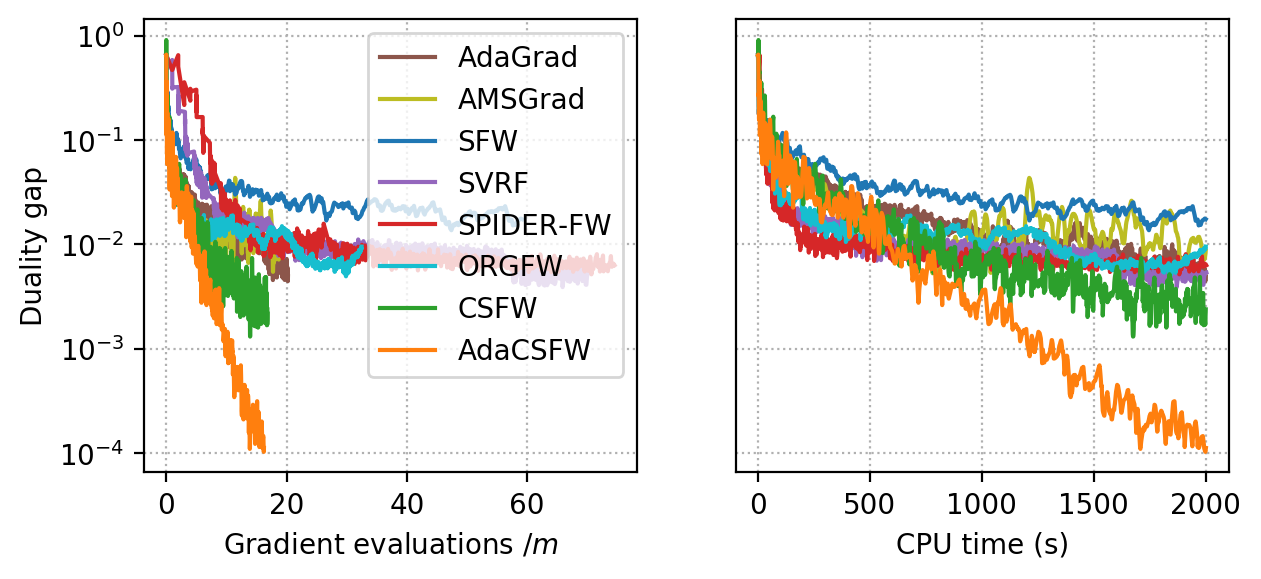}}
\caption{Logistic regression on the RCV1 dataset.}
\label{fig:log}
\end{figure}

\subsection{Nonconvex objectives}
\label{sec:ncvx}

We compare the algorithms on the training of two neural networks. CSFW is not applicable here. Analyses of these algorithms in the nonconvex setting are provided in \citet{reddi16,yurtsever19,orgfw20}. Since variance reduction can be ineffective in the training of deep neural networks \citep{defazio19}, we run Template~\ref{adafw} as AdaSFW only. However, for completeness and transparency, we still compare to the variance-reduced methods, for which we apply \emph{transform locking} as recommended in \citet{defazio19}. We propose a variant of AdaSFW with momentum inspired by Adam and AMSGrad, named AdamSFW; see Appendix~\ref{apx:adamsfw}. In line with the practice of deep learning, we use constant batch-sizes in all algorithms and every hyperparameter is tuned using the same methodology. Experiments were logged with Weights \& Biases \citep{wandb20}. The results are averaged over $5$ runs and the shaded areas represent $\pm1$ standard deviation.

\paragraph{IMDB dataset.}\label{para:imdb} We train a neural network for sentiment analysis on the IMDB dataset \citep{maas11}
for $20$ epochs. We use the 8\,185 subword representation from TensorFlow, available at \url{https://www.tensorflow.org/datasets/catalog/imdb_reviews#imdb_reviewssubwords8k}. The neural network has one fully-connected hidden layer of $64$ units and ReLU activations. Each layer is constrained into an $\ell_\infty$-ball with $\ell_2$-diameter equal to $6$ times the expected $\ell_2$-norm of the Glorot uniform initialized values. We set $K\leftarrow2$ and $\eta\leftarrow10^{-5/2}$ in AdaSFW, $K\leftarrow5$ and $\eta\leftarrow10^{-3}$ in AdamSFW, $\eta\leftarrow10^{-2}$ in AdaGrad, and $\eta\leftarrow10^{-7/2}$ in AMSGrad.
The results are presented in Figure~\ref{fig:imdb}.

\begin{figure}[h]
\vspace{2mm}
\centering{\includegraphics[scale=0.6]{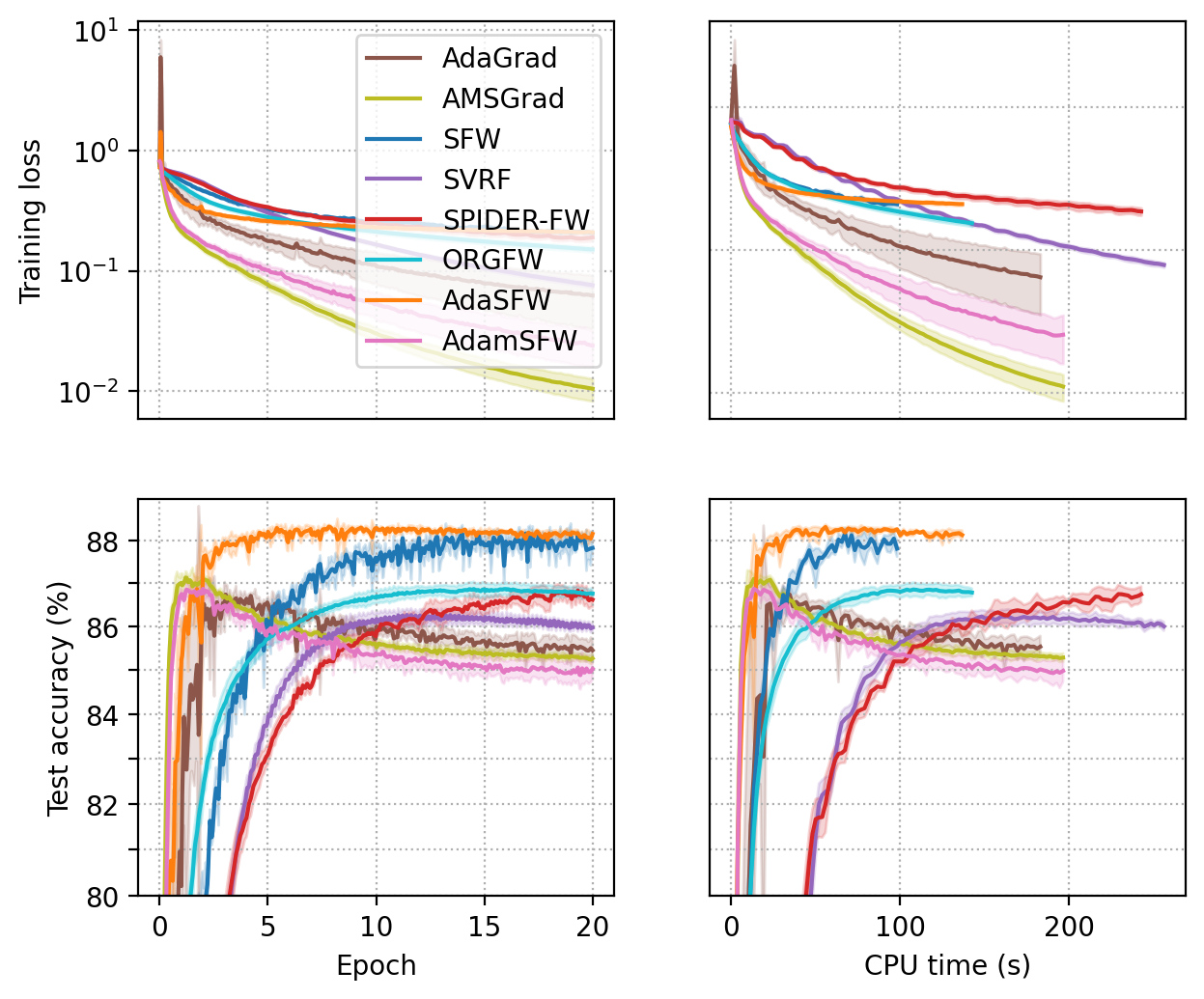}}
\caption{Neural network with one fully-connected hidden layer on the IMDB dataset.}
\label{fig:imdb}
\end{figure}

AdaSFW provides the best test performance, converging both very fast and to the highest accuracy on the test set, despite optimizing slowly on the training set. The vanilla SFW provides a better test accuracy than its variants SVRF, SPIDER-FW, and ORGFW. AdaGrad, AMSGrad, and AdamSFW optimize very fast on the training set and reach their highest test accuracy early on, which can be favorable if we consider using early stopping.

\paragraph{CIFAR-10 dataset.}\label{para:cifar} We train a convolutional neural network (CNN) for image classification on the CIFAR-10 dataset \citep{cifar10}, available at \url{https://www.cs.toronto.edu/~kriz/cifar.html}, for $100$ epochs. It has three $3\times3$ convolutional layers with $32$, $64$, and $64$ channels respectively, two $2\times2$ max-pooling layers, one fully-connected hidden layer of $64$ units, and ReLU activations. Each layer is constrained into an $\ell_\infty$-ball with $\ell_2$-diameter equal to $200$ times the expected $\ell_2$-norm of the Glorot uniform initialized values. We set $K\leftarrow10$ and $\eta\leftarrow10^{-3/2}$ in AdaSFW, $K\leftarrow5$ and $\eta\leftarrow10^{-7/2}$ in AdamSFW, $\eta\leftarrow10^{-2}$ in AdaGrad, and $\eta\leftarrow10^{-7/2}$ in AMSGrad. The results are presented in Figure~\ref{fig:cifar}.

\begin{figure}[H]
\vspace{2mm}
\centering{\includegraphics[scale=0.6]{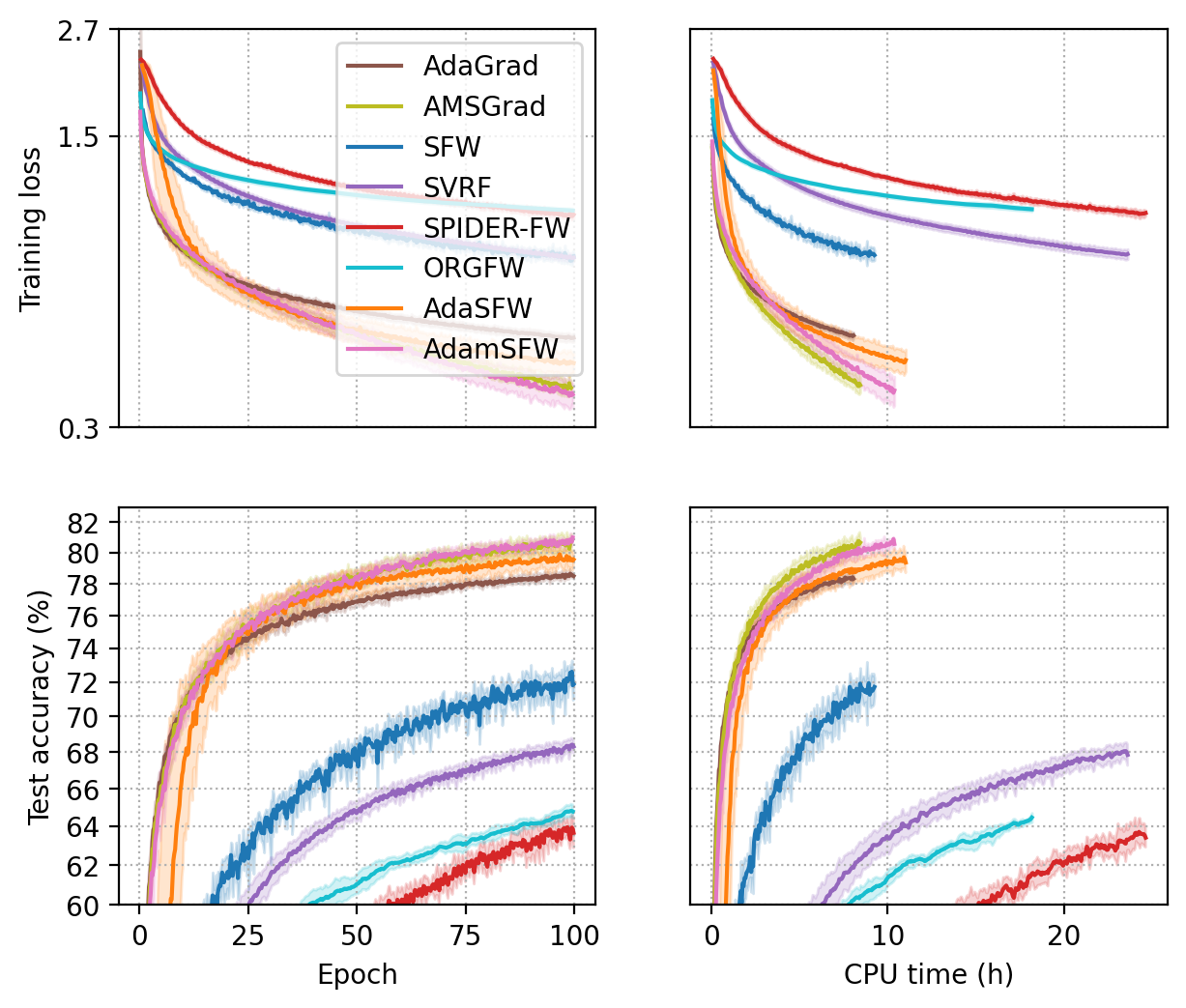}}
\caption{Convolutional neural network on the CIFAR-10 dataset.}
\label{fig:cifar}
\end{figure}

Here, AdaSFW and AdaGrad, and AdamSFW and AMSGrad, have similar performances respectively. Among projection-free algorithms though, AdaSFW and AdamSFW strongly outperform the other algorithms, while, once again, SVRF, SPIDER-FW, and ORGFW perform worse than the vanilla SFW.

\section{Final remarks}
\label{sec:final}

We have proposed a method for large-scale constrained optimization that augments stochastic Frank-Wolfe algorithms through adaptive gradients. We provided theoretical guarantees and demonstrated its computational advantage over the state-of-the-art stochastic Frank-Wolfe algorithms in a wide range of experiments with both convex and nonconvex objectives. On the training of neural networks, our method is the only projection-free algorithm to improve the performance of the vanilla SFW.

We also demonstrated the computational advantage of our method over adaptive gradient algorithms for constrained optimization. This may be an interesting area for future research as adaptive gradient algorithms have proven successful on a variety of tasks, usually addressed as unconstrained problems. Furthermore, the computational advantage may be even more pronounced on instances involving contraint sets that are more complex than the $\ell_1$-ball and the $\ell_\infty$-ball, as the non-Euclidean projection required at each iteration in adaptive gradient algorithms can be prohibitively expensive, while our method is projection-free and still leverages adaptive gradients.

\subsection*{Acknowledgments}

Research reported in this paper was partially supported by Deutsche Forschungsgemeinschaft (DFG) through the DFG Cluster of Excellence MATH+ and the Research Campus MODAL funded by the German Federal Ministry of Education and Research (fund numbers 05M14ZAM, 05M20ZBM).

\bibliographystyle{abbrvnat}
{\small\bibliography{biblio}}

\clearpage
\appendix

\section{AdamSFW: AdaSFW with momentum}
\label{apx:adamsfw}

In Algorithm~\ref{adamsfw}, inspired by Adam \citep{adam15} and AMSGrad \citep{reddi18}, we propose a variant of AdaSFW (Algorithm~\ref{adasfw}) with momentum which we used in our neural network training experiments (Section~\ref{sec:ncvx}). The batch-size $b\in\mathbb{N}\backslash\{0\}$ and the learning rate $\eta>0$ could be chosen as time-varying. Following \citet{reddi18}, we require $\beta_m<\sqrt{\beta_s}$, with default values $\beta_m\leftarrow0.9$ and $\beta_s\leftarrow0.99$ or $\beta_s\leftarrow0.999$. All operations in Line~\ref{ss} are entry-wise in $\mathbb{R}^n$. Notice the presence of the momentum term $m_t$ instead of $\tilde{\nabla}f(x_t)$ in Line~\ref{adam:sub}: the subproblem addressed by AdamSFW is
\begin{align*}
 \min_{x\in\mathcal{C}}\left\{Q_t(x)\coloneqq f(x_t)+\langle m_t,x-x_t\rangle+\frac{1}{2\eta}\|x-x_t\|_{H_t}^2\right\}.
\end{align*}

\vspace{1mm}
\begin{algorithm}[h]
\caption{AdamSFW}
\label{adamsfw}
\begin{algorithmic}[1]
\REQUIRE Start point $x_0\in\mathcal{C}$, batch-size $b\in\mathbb{N}\backslash\{0\}$, momentum parameters $\beta_m,\beta_s\in\left]0,1\right[$, offset $\delta>0$, number of inner iterations $K\in\mathbb{N}\backslash\{0\}$, learning rate $\eta>0$.
\STATE$m_{-1},s_{-1},\bar{s}_{-1}\leftarrow0,0,0$
\FOR{$t=0$ \textbf{to} $T-1$}
\STATE$i_1,\ldots,i_b\overset{\text{i.i.d.}}{\sim}\mathcal{U}(\llbracket1,m\rrbracket)$
\STATE$\tilde{\nabla}f(x_t)\leftarrow\displaystyle\frac{1}{b}\sum_{i=i_1}^{i_b}\nabla f_i(x_t)$
\STATE$m_t\leftarrow\beta_mm_{t-1}+(1-\beta_m)\tilde{\nabla}f(x_t)$
\STATE$s_t\leftarrow\beta_ss_{t-1}+(1-\beta_s)\tilde{\nabla}f(x_t)^2$
\STATE$\bar{s}_t\leftarrow\max\{\bar{s}_{t-1},s_t\}$
\STATE$H_t\leftarrow\operatorname{diag}(\delta1+\sqrt{\bar{s}_t})$\label{ss}
\STATE$y_0^{(t)}\leftarrow x_t$
\FOR{$k=0$ \textbf{to} $K-1$}
\STATE$\nabla Q_t(y_k^{(t)})\leftarrow m_t+\displaystyle\frac{1}{\eta}H_t(y_k^{(t)}-x_t)$\label{adam:sub}
\STATE$v_k^{(t)}\leftarrow\argmin\limits_{v\in\mathcal{C}}\langle\nabla Q_t(y_k^{(t)}),v\rangle$
\STATE$\gamma_k^{(t)}\leftarrow\min\left\{\eta\displaystyle\frac{\langle\nabla Q_t(y_k^{(t)}),y_k^{(t)}-v_k^{(t)}\rangle}{\|y_k^{(t)}-v_k^{(t)}\|_{H_t}^2},1\right\}$
\STATE$y_{k+1}\leftarrow y_k^{(t)}+\gamma_k^{(t)}(v_k^{(t)}-y_k^{(t)})$
\ENDFOR
\STATE$x_{t+1}\leftarrow y_K^{(t)}$
\ENDFOR
\end{algorithmic}
\end{algorithm}
\vspace{1mm}

\newpage
\section{Sensitivity to the number $K$ of inner iterations}
\label{apx:K}

We report in Figures~\ref{fig:svm-K}--\ref{fig:cifar-K} the sensitivity to $K$ in the respective computational experiments (Section~\ref{sec:exp}). In most cases, we can see that for large values of $K$, the method becomes less efficient in CPU time. This validates our approach, detailed in Section~\ref{sec:approach}, since $K\gg1$ represents solving the subproblems (almost) completely.

\begin{figure}[h]
\vspace{2mm}
\centerline{\includegraphics[scale=0.58]{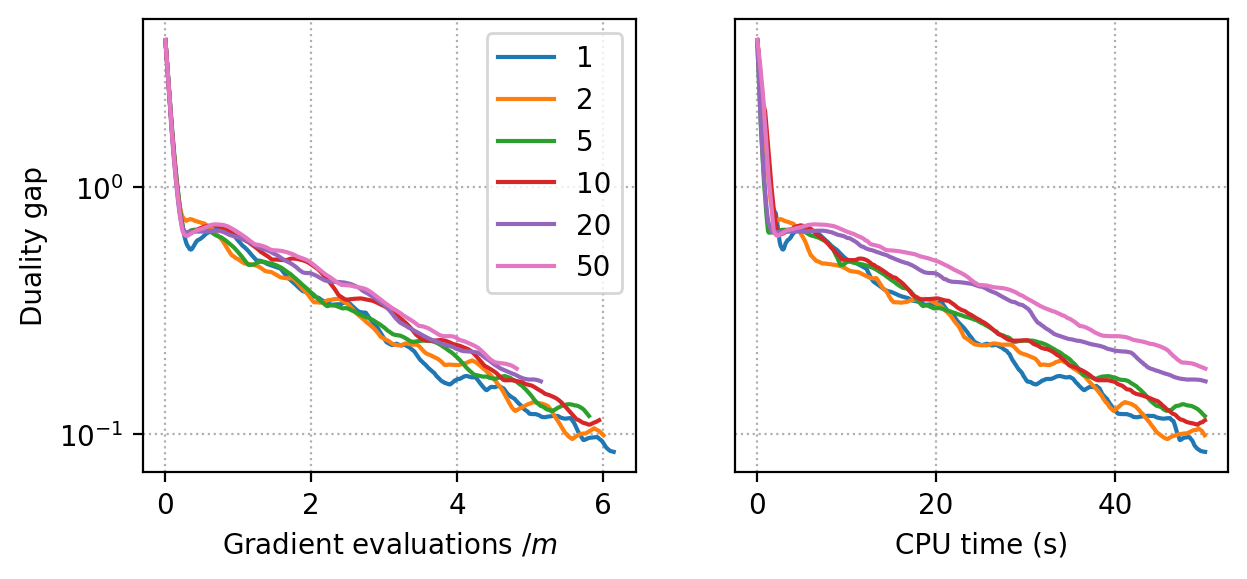}}
\caption{Sensitivity of AdaCSFW to $K$ on the \hyperref[para:svm]{support vector classification} experiment.}
\label{fig:svm-K}
\end{figure}

\begin{figure}[h]
\vspace{2mm}
\centerline{\includegraphics[scale=0.58]{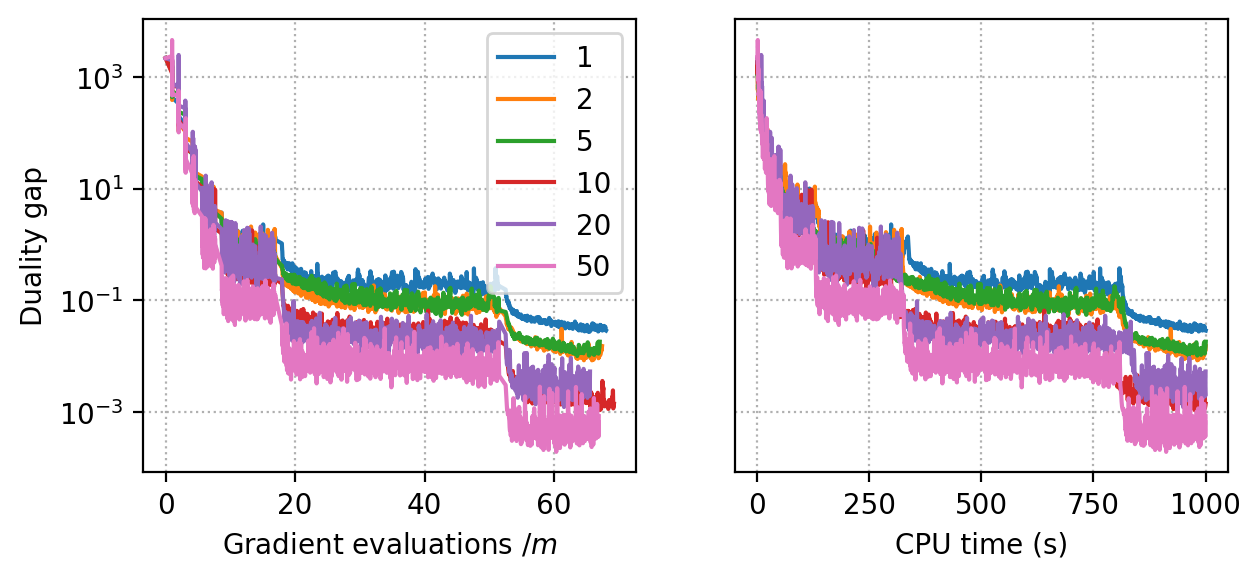}}
\caption{Sensitivity of AdaSVRF to $K$ on the \hyperref[para:lin]{linear regression} experiment.}
\label{fig:lin-K}
\end{figure}

\begin{figure}[h]
\vspace{2mm}
\centerline{\includegraphics[scale=0.58]{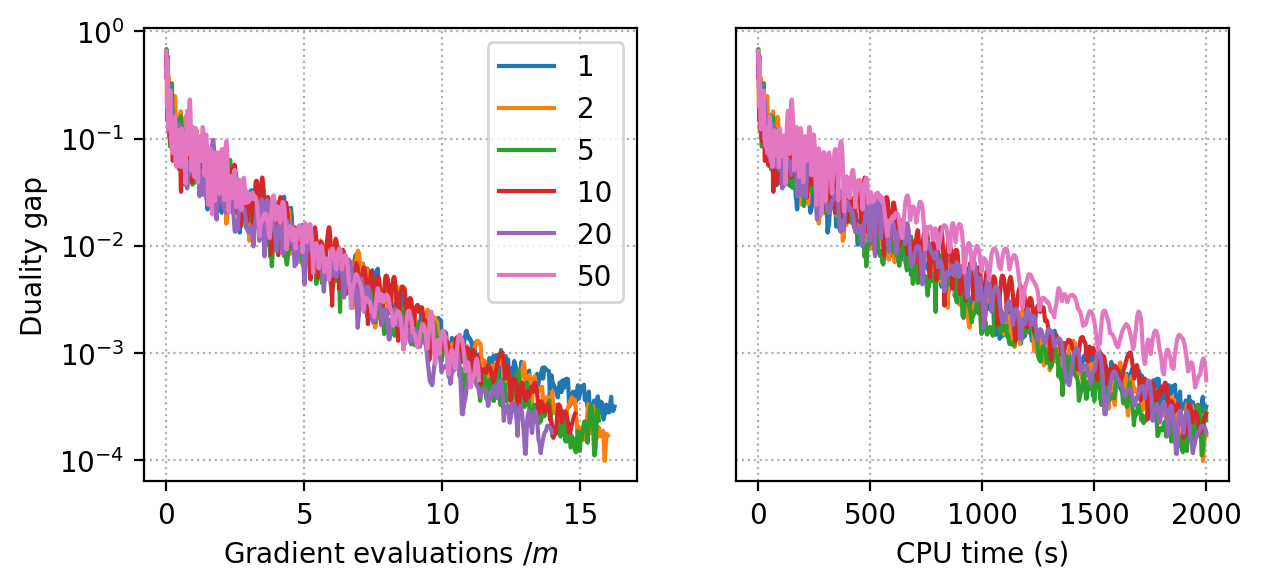}}
\caption{Sensitivity of AdaCSFW to $K$ on the \hyperref[para:log]{logistic regression} experiment.}
\label{fig:log-K}
\end{figure}

\begin{figure}[h]
\vspace{2mm}
\centerline{\includegraphics[scale=0.53]{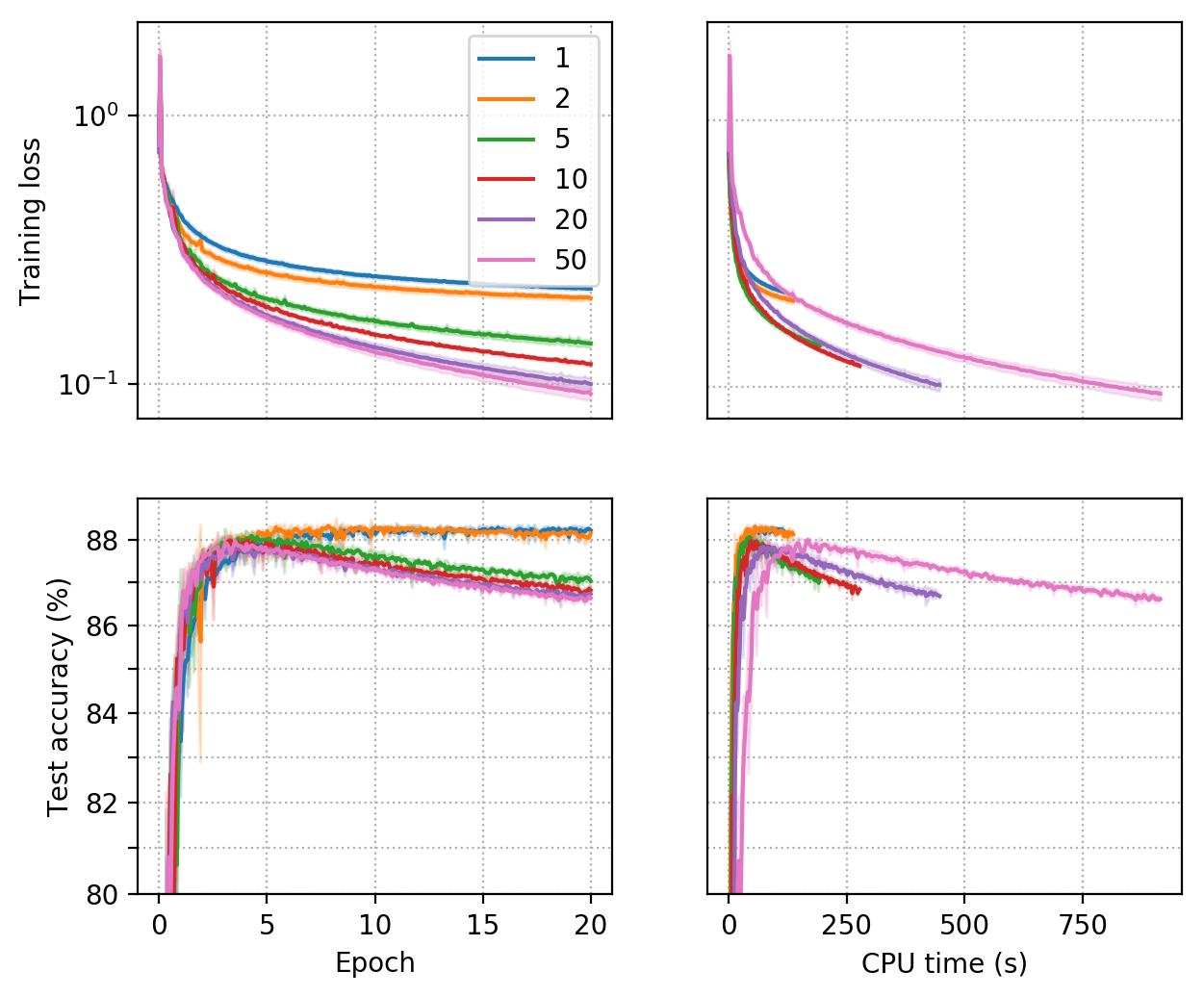}
\hfill\includegraphics[scale=0.53]{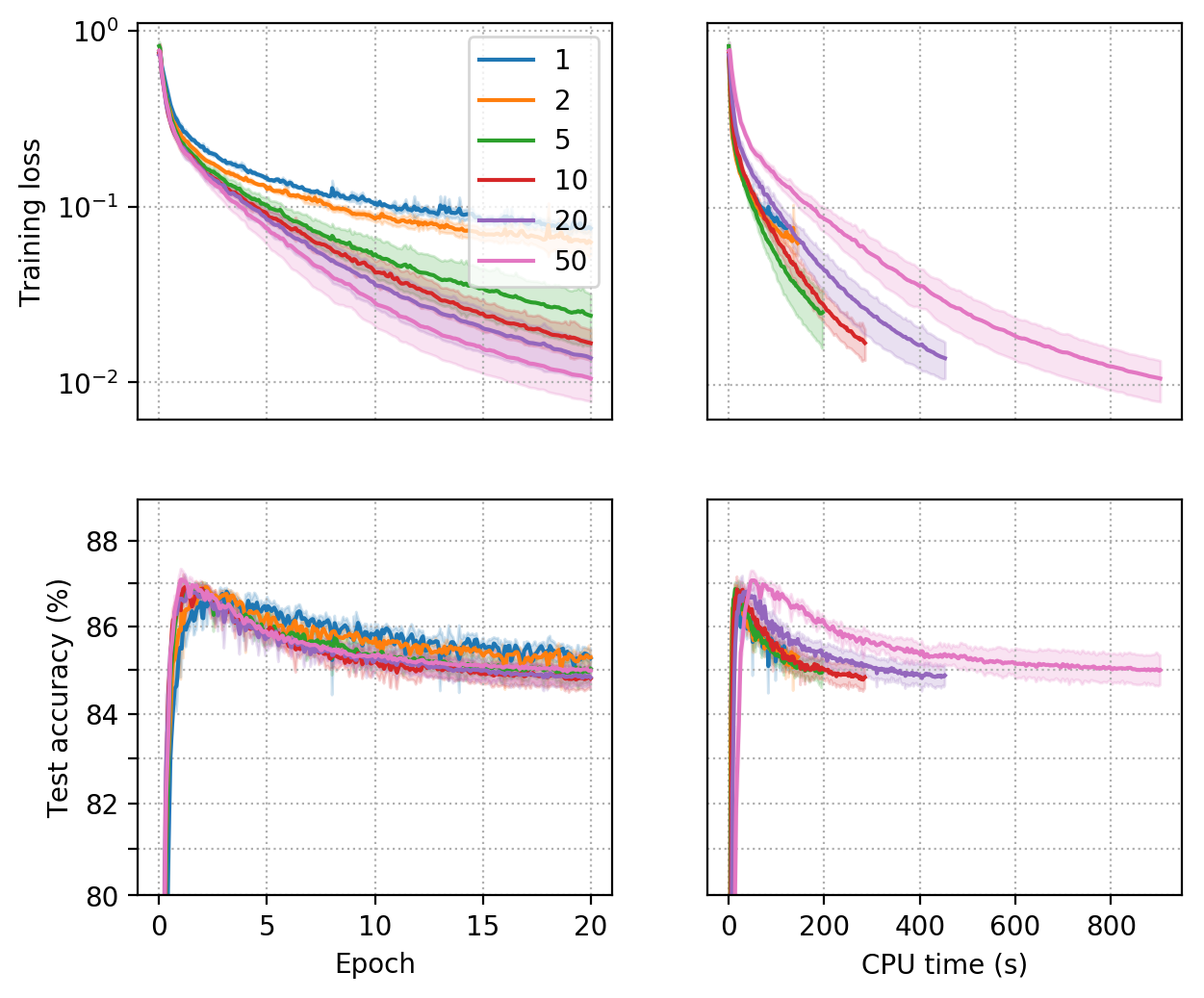}}
\caption{Sensitivity of AdaSFW (left) and AdamSFW (right) to $K$ on the \hyperref[para:imdb]{IMDB dataset} experiment.}
\label{fig:imdb-K}
\end{figure}

\begin{figure}[h]
\vspace{2mm}
\centerline{\includegraphics[scale=0.53]{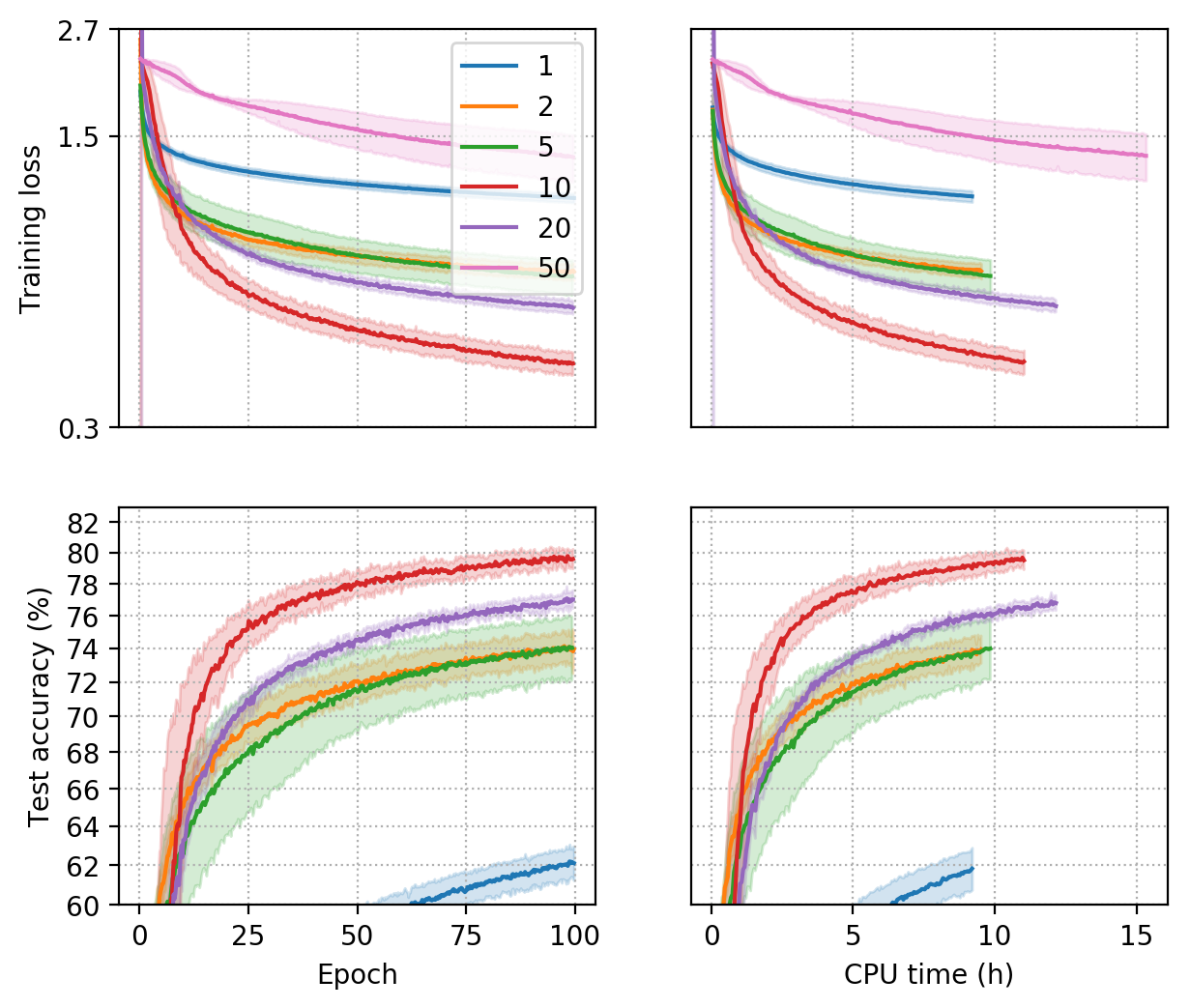}
\hfill\includegraphics[scale=0.53]{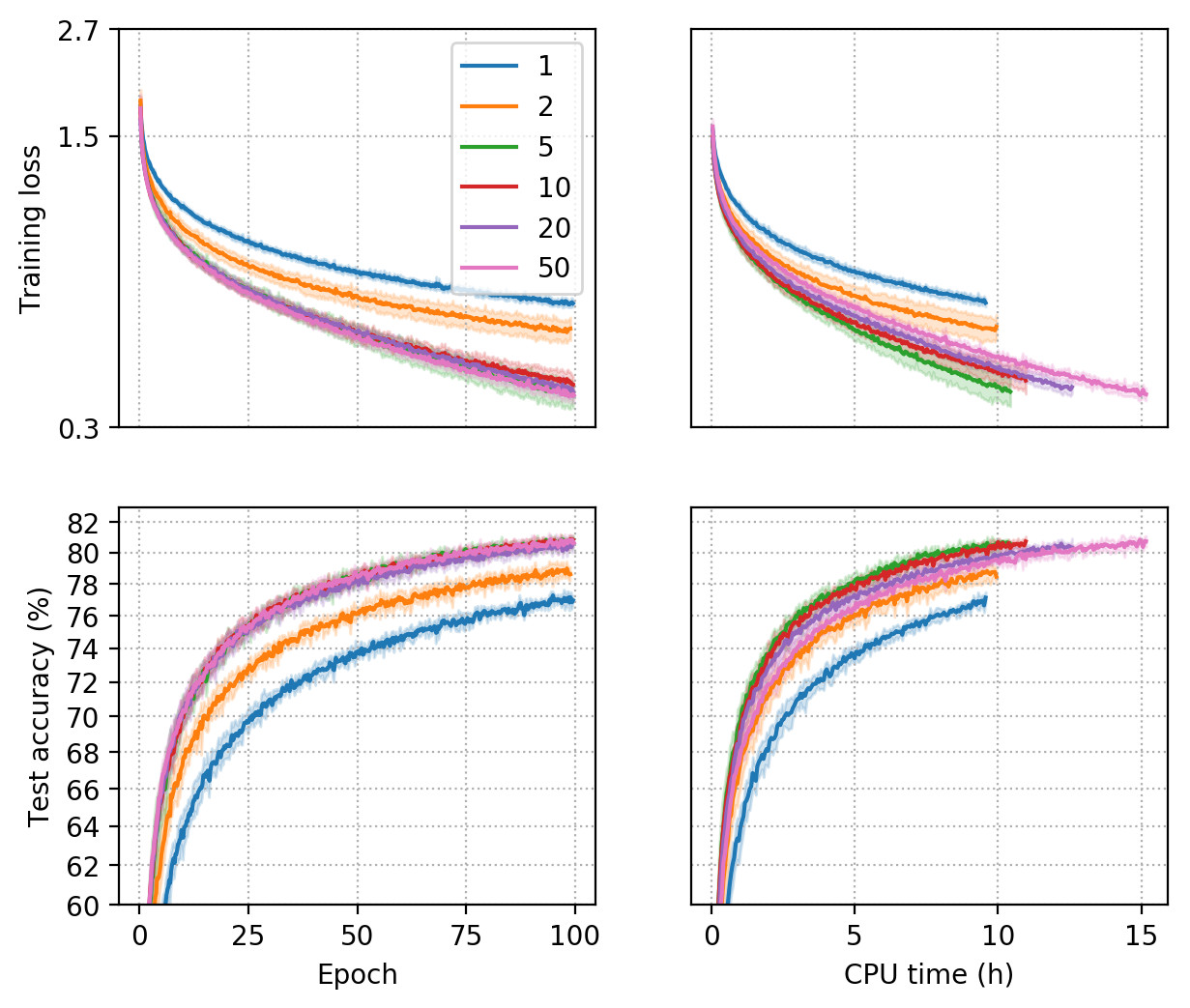}}
\caption{Sensitivity of AdaSFW (left) and AdamSFW (right) to $K$ on the \hyperref[para:cifar]{CIFAR-10 dataset} experiment.}
\label{fig:cifar-K}
\end{figure}

\clearpage
\section{The Frank-Wolfe duality gap}

We report in Fact~\ref{fact:wolfe} some well-known properties of the Frank-Wolfe duality gap \citep{jaggi13fw}.

\begin{fact}
\label{fact:wolfe}
 Let $\mathcal{C}\subset\mathbb{R}^n$ be a compact convex set, $f\colon\mathbb{R}^n\rightarrow\mathbb{R}$ be a smooth function, and $x\in\mathcal{C}$. The Frank-Wolfe duality gap of $f$ at $x$ over $\mathcal{C}$ is $g(x)\coloneqq\max_{v\in\mathcal{C}}\langle\nabla f(x),x-v\rangle$ and satisfies
 \begin{enumerate}[label=(\roman*)]
  \item $g(x)\geq0$,
  \item $g(x)=0\Leftrightarrow x$ is a stationary point,
  \item $f(x)-\min_\mathcal{C}f\leq g(x)$ if $f$ is convex.
 \end{enumerate}
\end{fact}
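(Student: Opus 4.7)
The plan is to prove the three properties in order, each by a short direct argument from the definition $g(x) = \max_{v \in \mathcal{C}} \langle \nabla f(x), x - v \rangle$. None of the three statements require heavy machinery; the only care needed is to make sure the characterization of stationarity in (ii) matches the usual first-order optimality condition for a convex constraint set.

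For (i), I would observe that since $x \in \mathcal{C}$, the point $v = x$ is feasible in the maximization defining $g(x)$, and it yields $\langle \nabla f(x), x - x \rangle = 0$. Hence $g(x) \geq 0$.

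For (ii), I would use the standard first-order optimality characterization of stationarity on a convex set: $x \in \mathcal{C}$ is a stationary point of $f$ over $\mathcal{C}$ if and only if $\langle \nabla f(x), v - x \rangle \geq 0$ for all $v \in \mathcal{C}$, equivalently $\langle \nabla f(x), x - v \rangle \leq 0$ for all $v \in \mathcal{C}$, equivalently $g(x) \leq 0$. Combined with (i), this gives the equivalence $g(x) = 0 \Leftrightarrow x$ is a stationary point.

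For (iii), assuming $f$ is convex, I would pick any $x^* \in \argmin_\mathcal{C} f$ and apply the gradient inequality at $x$ evaluated at $x^*$:
\begin{align*}
 f(x^*) \geq f(x) + \langle \nabla f(x), x^* - x \rangle,
\end{align*}
which rearranges to $f(x) - f(x^*) \leq \langle \nabla f(x), x - x^* \rangle$. Since $x^* \in \mathcal{C}$, the right-hand side is at most $\max_{v \in \mathcal{C}} \langle \nabla f(x), x - v \rangle = g(x)$, yielding $f(x) - \min_\mathcal{C} f \leq g(x)$.

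The only mildly nontrivial step is the stationarity characterization used in (ii); this is a textbook fact for smooth functions on convex sets, and I would simply cite it (or sketch it in one line by noting that if $\langle \nabla f(x), v - x \rangle < 0$ for some $v \in \mathcal{C}$, then $x + \gamma(v - x) \in \mathcal{C}$ for $\gamma \in [0,1]$ and the directional derivative of $f$ at $x$ along $v - x$ is negative, contradicting stationarity; conversely, the first-order condition is itself the definition of stationarity on a convex feasible set). Everything else is one-line algebra, so no serious obstacle is expected.
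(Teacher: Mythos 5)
Your proposal is correct and follows essentially the same route as the paper's proof: (i) is the observation that the maximum is at least its value at the feasible point $v=x$ (the paper phrases this via the minimizer $w$ of $\langle\nabla f(x),v\rangle$, which is the same triviality), (ii) is the standard no-feasible-descent-direction characterization of stationarity, and (iii) is the gradient inequality at $x^*\in\argmin_\mathcal{C}f$ followed by bounding $\langle\nabla f(x),x-x^*\rangle$ by the maximum over $\mathcal{C}$. No gaps.
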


\begin{proof}
\begin{enumerate}[label=(\roman*)]
\item Let $w\in\argmin_{v\in\mathcal{C}}\langle\nabla f(x),v\rangle$. We have
\begin{align*}
 g(x)
 &=\max_{v\in\mathcal{C}}\langle\nabla f(x),x-v\rangle\\
 &=\langle\nabla f(x),x-w\rangle\\
 &=\langle\nabla f(x),x\rangle-\langle\nabla f(x),w\rangle\\
 &\geq0,
\end{align*}
by definition of $w$.
\item We have
\begin{align*}
 0
 &=g(x)\\
 &=\max_{v\in\mathcal{C}}\langle\nabla f(x),x-v\rangle\\
 &\geq\langle\nabla f(x),x-y\rangle,
\end{align*}
for all $y\in\mathcal{C}$. Therefore, there exists no descent direction for $f$ at $x$ over $\mathcal{C}$. The converse is trivial.
\item Let $x^*\in\argmin_\mathcal{C}f$. By convexity of $f$,
 \begin{align*}
  f(x)-\min_\mathcal{C}f
  &=f(x)-f(x^*)\\
  &\leq\langle\nabla f(x),x-x^*\rangle\\
  &\leq\max_{v\in\mathcal{C}}\langle\nabla f(x),x-v\rangle\\
  &=g(x),
 \end{align*}
 since $x^*\in\mathcal{C}$.
\end{enumerate}
\end{proof}

\section{Proofs}
\label{apx:proofs}

\subsection{The algorithm}

\begin{lemma}
 \label{lem:sub}
 Consider Template~\ref{adafw} and let $t\in\llbracket0,T-1\rrbracket$. For all $k\in\llbracket0,K-1\rrbracket$,
 \begin{align*}
  Q_t(y_{k+1}^{(t)})
  &=\min_{\gamma\in\left[0,\gamma_t\right]}Q_t(y_k^{(t)}+\gamma(v_k^{(t)}-y_k^{(t)})).
 \end{align*}
 In particular, $Q_t(y_{k+1}^{(t)})\leq Q_t(y_k^{(t)})$ and $Q_t(y_1^{(t)})\leq Q_t(y_0^{(t)}+\gamma_t(v_0^{(t)}-y_0^{(t)}))$.
\end{lemma}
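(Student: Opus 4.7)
The plan is to exploit the fact that $Q_t$ is a convex quadratic, so its restriction to the one-dimensional segment $\gamma\mapsto y_k^{(t)}+\gamma(v_k^{(t)}-y_k^{(t)})$ for $\gamma\in\mathbb{R}$ is a one-variable convex quadratic in $\gamma$. Once I have this restriction written out explicitly, the constrained minimization over $[0,\gamma_t]$ reduces to clipping the (unique, if nondegenerate) unconstrained minimizer to that interval.

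Concretely, set $d_k^{(t)}\coloneqq v_k^{(t)}-y_k^{(t)}$ and expand $Q_t(y_k^{(t)}+\gamma d_k^{(t)})$ using the definition from~\eqref{sub2}. Differentiating in $\gamma$ and recognizing $\nabla Q_t(y_k^{(t)})=\tilde{\nabla}f(x_t)+(1/\eta_t)H_t(y_k^{(t)}-x_t)$ from Line~\ref{sub:Q}, I would arrive at
\begin{align*}
 \frac{\diff}{\diff\gamma}Q_t(y_k^{(t)}+\gamma d_k^{(t)})
 =\langle\nabla Q_t(y_k^{(t)}),d_k^{(t)}\rangle+\frac{\gamma}{\eta_t}\|d_k^{(t)}\|_{H_t}^2.
\end{align*}
Assuming $v_k^{(t)}\neq y_k^{(t)}$ (otherwise the claim is trivial because $Q_t$ is constant along the segment), $\|d_k^{(t)}\|_{H_t}^2>0$ since $H_t$ has positive diagonal entries by construction in Line~\ref{h}. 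Setting the derivative to zero yields the unconstrained minimizer
\begin{align*}
 \gamma^*=\eta_t\frac{\langle\nabla Q_t(y_k^{(t)}),y_k^{(t)}-v_k^{(t)}\rangle}{\|y_k^{(t)}-v_k^{(t)}\|_{H_t}^2}.
\end{align*}
The only subtle point — and the main thing to argue cleanly — is that $\gamma^*\geq 0$, so that clipping to $[0,\gamma_t]$ just amounts to $\min\{\gamma^*,\gamma_t\}$, which matches Line~\ref{algo:gammak}. This follows from the choice of $v_k^{(t)}$ as a linear minimizer: $\langle\nabla Q_t(y_k^{(t)}),v_k^{(t)}\rangle\leq\langle\nabla Q_t(y_k^{(t)}),y_k^{(t)}\rangle$ because $y_k^{(t)}\in\mathcal{C}$ (by induction on $k$, since $y_0^{(t)}=x_t\in\mathcal{C}$ and each update is a convex combination with $\gamma_k^{(t)}\in[0,\gamma_t]\subseteq[0,1]$), hence $\langle\nabla Q_t(y_k^{(t)}),y_k^{(t)}-v_k^{(t)}\rangle\geq 0$.

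For the two remaining ``in particular'' statements, I would simply observe: $Q_t(y_{k+1}^{(t)})\leq Q_t(y_k^{(t)})$ because $\gamma=0$ lies in $[0,\gamma_t]$ and yields $y_k^{(t)}$; and $Q_t(y_1^{(t)})\leq Q_t(y_0^{(t)}+\gamma_t(v_0^{(t)}-y_0^{(t)}))$ because $\gamma=\gamma_t\in[0,\gamma_t]$ is feasible in the minimization characterizing $Q_t(y_1^{(t)})$. No step presents a real obstacle here; the entire lemma is a routine exact line-search computation on a convex quadratic, and the only item worth stating carefully is the sign of $\gamma^*$.
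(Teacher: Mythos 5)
Your proposal is correct and follows essentially the same route as the paper's proof: restrict $Q_t$ to the segment, observe it is a convex quadratic in $\gamma$ minimized at $\gamma^*=\eta_t\langle\nabla Q_t(y_k^{(t)}),y_k^{(t)}-v_k^{(t)}\rangle/\|y_k^{(t)}-v_k^{(t)}\|_{H_t}^2$, and use the linear-minimization property of $v_k^{(t)}$ together with $y_k^{(t)}\in\mathcal{C}$ to get $\gamma^*\geq0$, so that $\gamma_k^{(t)}=\min\{\gamma^*,\gamma_t\}$ realizes the minimum over $[0,\gamma_t]$. Your added care about the degenerate case $v_k^{(t)}=y_k^{(t)}$ and the induction showing $y_k^{(t)}\in\mathcal{C}$ are details the paper leaves implicit, but they do not change the argument.
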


\begin{proof}
 Let $k\in\llbracket0,K-1\rrbracket$ and $\varphi_k^{(t)}\colon\gamma\in\mathbb{R}\mapsto Q_t(y_k^{(t)}+\gamma(v_k^{(t)}-y_k^{(t)}))$. Then $\varphi_k^{(t)}$ is a convex quadratic and is minimized at 
 \begin{align*}
  \gamma^*\coloneqq\eta_t\frac{\langle\nabla Q_t(y_k^{(t)}),y_k^{(t)}-v_k^{(t)}\rangle}{\|y_k^{(t)}-v_k^{(t)}\|_{H_t}^2}.
 \end{align*}
 Since $v_k^{(t)}\in\argmin_{v\in\mathcal{C}}\langle\nabla Q_t(y_k^{(t)}),v\rangle$ and $y_k^{(t)}\in\mathcal{C}$, we have $\langle\nabla Q_t(y_k^{(t)}),y_k^{(t)}-v_k^{(t)}\rangle\geq0$ so $\gamma^*\geq0$. Thus, $\varphi_k^{(t)}$ is a decreasing function over $\left[0,\gamma^*\right]$. Since $\gamma_k^{(t)}=\min\{\gamma^*,\gamma_t\}$, we obtain
 \begin{align*}
  \varphi_k^{(t)}(\gamma_k^{(t)})
  &=\min_{\gamma\in\left[0,\gamma_t\right]}\varphi_k^{(t)}(\gamma),
 \end{align*}
 i.e.,
 \begin{align*}
  Q_t(y_{k+1}^{(t)})
  &=\min_{\gamma\in\left[0,\gamma_t\right]}Q_t(y_k^{(t)}+\gamma(v_k^{(t)}-y_k^{(t)})).
 \end{align*}
\end{proof}

\begin{lemma}
 \label{lem:x}
 Consider Template~\ref{adafw}. For all $t\in\llbracket0,T-1\rrbracket$,
\begin{align*}
 \|x_{t+1}-x_t\|_2
 \leq KD\gamma_t.
\end{align*}
\end{lemma}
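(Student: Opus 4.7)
The plan is straightforward: telescope $x_{t+1}-x_t$ over the $K$ inner Frank-Wolfe iterations and bound each step by $\gamma_t$ times the diameter $D$ of $\mathcal{C}$.

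First I would write
\begin{align*}
 x_{t+1}-x_t
 = y_K^{(t)}-y_0^{(t)}
 = \sum_{k=0}^{K-1}\bigl(y_{k+1}^{(t)}-y_k^{(t)}\bigr)
 = \sum_{k=0}^{K-1}\gamma_k^{(t)}\bigl(v_k^{(t)}-y_k^{(t)}\bigr),
\end{align*}
using the update rule from Template~\ref{adafw} together with $y_0^{(t)}=x_t$ and $y_K^{(t)}=x_{t+1}$.

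Next I would verify that every inner iterate lies in $\mathcal{C}$, so that the diameter bound applies to $\|v_k^{(t)}-y_k^{(t)}\|_2$. Since $\gamma_t\in[0,1]$ and $\gamma_k^{(t)}=\min\{\gamma^*,\gamma_t\}$ with $\gamma^*\geq 0$ (as observed in the proof of Lemma~\ref{lem:sub}), we have $\gamma_k^{(t)}\in[0,1]$. Combined with $y_0^{(t)}=x_t\in\mathcal{C}$ and $v_k^{(t)}\in\mathcal{C}$, a short induction shows $y_k^{(t)}\in\mathcal{C}$ for every $k\in\llbracket 0,K\rrbracket$, so $\|v_k^{(t)}-y_k^{(t)}\|_2\leq D$.

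Finally I would apply the triangle inequality and the bounds $\gamma_k^{(t)}\leq\gamma_t$ and $\|v_k^{(t)}-y_k^{(t)}\|_2\leq D$ termwise to conclude
\begin{align*}
 \|x_{t+1}-x_t\|_2
 \leq \sum_{k=0}^{K-1}\gamma_k^{(t)}\|v_k^{(t)}-y_k^{(t)}\|_2
 \leq \sum_{k=0}^{K-1}\gamma_t D
 = KD\gamma_t.
\end{align*}
There is no real obstacle here; the only subtlety worth stating explicitly is the feasibility of the inner iterates $y_k^{(t)}$, which is what justifies replacing $\|v_k^{(t)}-y_k^{(t)}\|_2$ by the diameter $D$ of $\mathcal{C}$.
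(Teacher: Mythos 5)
Your proof is correct and follows essentially the same route as the paper's: decompose the outer step into the $K$ inner Frank--Wolfe steps and bound each by $\gamma_k^{(t)}\|v_k^{(t)}-y_k^{(t)}\|_2\leq\gamma_t D$. The only cosmetic difference is that you use the plain telescoping identity $y_K^{(t)}-y_0^{(t)}=\sum_{k=0}^{K-1}\gamma_k^{(t)}(v_k^{(t)}-y_k^{(t)})$, whereas the paper expands the same displacement in the product form $\sum_{k=0}^{K-1}\bigl(\prod_{\ell=k+1}^{K-1}(1-\gamma_\ell^{(t)})\bigr)\gamma_k^{(t)}(v_k^{(t)}-x_t)$; both yield the bound $KD\gamma_t$, and your explicit remark on the feasibility of the inner iterates $y_k^{(t)}$ is a point the paper leaves implicit.
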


\begin{proof}
 Let $t\in\llbracket0,T-1\rrbracket$. We have
 \begin{align*}
  x_{t+1}-x_t
  =y_K^{(t)}-y_0^{(t)}
 \end{align*}
 and, by a straightforward induction on $k\in\llbracket0,K\rrbracket$,
 \begin{align*}
  y_K^{(t)}-y_0^{(t)}
  =\sum_{k=0}^{K-1}\left(\prod_{\ell=k+1}^{K-1}(1-\gamma_\ell^{(t)})\right)\gamma_k^{(t)}(v_k^{(t)}-x_t).
 \end{align*}
 Since for all $k\in\llbracket0,K-1\rrbracket$,
 \begin{align*}
  0\leq\gamma_k^{(t)}\leq\gamma_t\leq1,
 \end{align*}
 we obtain
 \begin{align*}
  \|x_{t+1}-x_t\|_2
  &\leq\sum_{k=0}^{K-1}\left(\prod_{\ell=k+1}^{K-1}(1-\gamma_\ell^{(t)})\right)\gamma_k^{(t)}\|v_k^{(t)}-x_t\|_2\\
  &\leq\sum_{k=0}^{K-1}1\cdot\gamma_t\cdot D\\
  &=K\gamma_tD.
 \end{align*}
\end{proof}

\subsection{SFW with adaptive gradients}

Lemma~\ref{lem:sfw} is adapted from \citet[Appendix~B]{hazan16}.

\begin{lemma}
 \label{lem:sfw}
 Consider AdaSFW (Algorithm~\ref{adasfw}). For all $t\in\llbracket0,T-1\rrbracket$,
 \begin{align*}
  \mathsf{E}[\|\tilde{\nabla}f(x_t)-\nabla f(x_t)\|_2]
  \leq\frac{G}{\sqrt{b_t}}.
 \end{align*}
\end{lemma}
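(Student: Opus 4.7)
The plan is to bound the $\ell_2$-error by its standard deviation via Jensen, and then to compute that standard deviation directly from the i.i.d.~structure of the minibatch.

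First I would apply Jensen's inequality to the concave function $\sqrt{\cdot}$, which gives
\begin{align*}
 \mathsf{E}[\|\tilde{\nabla}f(x_t)-\nabla f(x_t)\|_2]
 \leq\sqrt{\mathsf{E}[\|\tilde{\nabla}f(x_t)-\nabla f(x_t)\|_2^2]}.
\end{align*}
This reduces the problem to controlling the variance of $\tilde{\nabla}f(x_t)$ in the Euclidean norm. Next I would exploit the fact that, conditionally on $x_t$, the random vectors $Z_j\coloneqq\nabla f_{i_j}(x_t)-\nabla f(x_t)$ for $j\in\llbracket1,b_t\rrbracket$ are i.i.d.~zero-mean (since the indices $i_j$ are drawn i.i.d.~uniformly from $\llbracket1,m\rrbracket$ and $\nabla f(x_t)=(1/m)\sum_{i=1}^m\nabla f_i(x_t)$). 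Expanding the square and using that the cross-terms vanish in expectation would yield
\begin{align*}
 \mathsf{E}[\|\tilde{\nabla}f(x_t)-\nabla f(x_t)\|_2^2]
 =\frac{1}{b_t^2}\sum_{j=1}^{b_t}\mathsf{E}[\|Z_j\|_2^2]
 =\frac{1}{b_t}\mathsf{E}[\|Z_1\|_2^2].
\end{align*}

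Then I would bound the per-sample second moment by using the variance-is-at-most-second-moment inequality $\mathsf{E}[\|Z_1\|_2^2]=\mathsf{E}[\|\nabla f_{i_1}(x_t)\|_2^2]-\|\nabla f(x_t)\|_2^2\leq\mathsf{E}[\|\nabla f_{i_1}(x_t)\|_2^2]$, combined with the uniform gradient bound $\|\nabla f_i(x)\|_2\leq G$ from Section~\ref{sec:nota}, which applies since $x_t\in\mathcal{C}$ (the iterates of Template~\ref{adafw} stay in $\mathcal{C}$ by the convex-combination update, as $\gamma_k^{(t)}\in\left[0,1\right]$). This gives $\mathsf{E}[\|Z_1\|_2^2]\leq G^2$, and chaining back through the previous displays yields the claim $\mathsf{E}[\|\tilde{\nabla}f(x_t)-\nabla f(x_t)\|_2]\leq G/\sqrt{b_t}$.

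There is no real obstacle here: the argument is a textbook bias/variance computation for an i.i.d.~minibatch mean. The only point worth being careful about is making the conditioning on $x_t$ (and on the history preceding iteration $t$) explicit, since $x_t$ itself is random in Algorithm~\ref{adasfw}; once conditioned on the past, the freshly-drawn indices $i_1,\ldots,i_{b_t}$ are independent of $x_t$, so the computation above goes through verbatim and the outer expectation only plays a trivial role via the tower property.
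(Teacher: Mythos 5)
Your proof is correct; the paper does not spell out an argument for this lemma, instead deferring to \citet[Appendix~B]{hazan16}, and your derivation is exactly the standard computation used there: Jensen's inequality, the vanishing of the cross-terms for the i.i.d.\ zero-mean summands $Z_j$ conditionally on the history, and the bound $\mathsf{E}[\|Z_1\|_2^2]\leq\mathsf{E}[\|\nabla f_{i_1}(x_t)\|_2^2]\leq G^2$ from the definition of $G$. Your attention to the conditioning on the past and to feasibility of $x_t$ (so that the uniform gradient bound over $\mathcal{C}$ applies) appropriately fills in the details the citation leaves implicit.
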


\begin{theorem}[(Theorem~\ref{th:adasfw})]
 Consider AdaSFW (Algorithm~\ref{adasfw}) with $b_t\leftarrow\big(G(t+2)/(LD)\big)^2$, $\eta_t\leftarrow\lambda_t^-/L$, and $\gamma_t\leftarrow2/(t+2)$, and let $\kappa\coloneqq\lambda_0^+/\lambda_0^-$. Then for all $t\in\llbracket1,T\rrbracket$,
 \begin{align*}
  \mathsf{E}[f(x_t)]-\min_\mathcal{C}f
  \leq\frac{2LD^2(K+1+\kappa)}{t+1}.
 \end{align*}
\end{theorem}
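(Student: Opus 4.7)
The plan is to compare the true descent $f(x_{t+1}) - f(x_t)$ to the surrogate descent on the quadratic model $Q_t$, and then to use the very first inner Frank--Wolfe step to get a classical one-step decrease in $f(x_t) - f(x^*)$. Applying $L$-smoothness of $f$ and splitting $\nabla f(x_t) = \tilde{\nabla}f(x_t) + (\nabla f(x_t) - \tilde{\nabla}f(x_t))$ gives
\begin{align*}
 f(x_{t+1}) \leq f(x_t) + \langle\tilde{\nabla}f(x_t),x_{t+1}-x_t\rangle + \frac{L}{2}\|x_{t+1}-x_t\|_2^2 + \langle\nabla f(x_t) - \tilde{\nabla}f(x_t),x_{t+1}-x_t\rangle.
\end{align*}
Since $\lambda_{\min}(H_t) \geq \lambda_t^-$ and $\eta_t = \lambda_t^-/L$, we have $\tfrac{L}{2}\|\cdot\|_2^2 \leq \tfrac{1}{2\eta_t}\|\cdot\|_{H_t}^2$, so the first three terms on the right-hand side are at most $Q_t(x_{t+1})$. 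By Lemma~\ref{lem:sub}, $Q_t(x_{t+1}) = Q_t(y_K^{(t)}) \leq Q_t(y_1^{(t)}) \leq Q_t(x_t + \gamma_t(v_0^{(t)} - x_t))$.

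Expanding this last expression, using that $v_0^{(t)} \in \argmin_{v\in\mathcal{C}}\langle\tilde{\nabla}f(x_t),v\rangle$ so $\langle\tilde{\nabla}f(x_t), v_0^{(t)} - x_t\rangle \leq \langle\tilde{\nabla}f(x_t), x^* - x_t\rangle$ for $x^* \in \argmin_\mathcal{C} f$, and applying convexity of $f$ produces
\begin{align*}
 Q_t(x_t+\gamma_t(v_0^{(t)}-x_t)) - f(x_t) \leq -\gamma_t(f(x_t)-f(x^*)) + \gamma_t\langle\tilde{\nabla}f(x_t)-\nabla f(x_t),x^*-x_t\rangle + \frac{\gamma_t^2}{2\eta_t}\|v_0^{(t)}-x_t\|_{H_t}^2.
\end{align*}
The quadratic term is bounded by $\tfrac{L\kappa\gamma_t^2}{2}D^2$ via $\|\cdot\|_{H_t}^2 \leq \lambda_t^+\|\cdot\|_2^2$ and the monotonicity $\lambda_t^+/\lambda_t^- \leq \lambda_0^+/\lambda_0^- = \kappa$. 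I would then take expectations, applying Cauchy--Schwarz to each of the two noise cross-terms, and bound $\mathsf{E}[\|\tilde{\nabla}f(x_t)-\nabla f(x_t)\|_2] \leq G/\sqrt{b_t} = LD/(t+2)$ (Lemma~\ref{lem:sfw} with the chosen $b_t$), together with $\|x^*-x_t\|_2 \leq D$ and $\|x_{t+1}-x_t\|_2 \leq KD\gamma_t$ (Lemma~\ref{lem:x}). With $\gamma_t = 2/(t+2)$, the two cross-terms contribute $\tfrac{2LD^2}{(t+2)^2}$ and $\tfrac{2KLD^2}{(t+2)^2}$, which combine with the quadratic term into the recursion
\begin{align*}
 \mathsf{E}[f(x_{t+1})-f(x^*)] \leq \frac{t}{t+2}\,\mathsf{E}[f(x_t)-f(x^*)] + \frac{2LD^2(K+1+\kappa)}{(t+2)^2}.
\end{align*}

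An induction on $t$ then yields the claim, using $(t+2)^2 \geq (t+1)(t+2)$ to collapse the recursion into $C/(t+2)$ with $C = 2LD^2(K+1+\kappa)$; the base case $t=0$ is free because $\gamma_0 = 1$ wipes out the contribution of $f(x_0)-f(x^*)$, giving $\mathsf{E}[f(x_1)]-f(x^*)\leq C/4 \leq C/2$. The main obstacle I anticipate is cleanly accounting for the $K$ factor in the final constant: it enters only through the second noise cross-term, because $K$ inner Frank--Wolfe steps can displace $x_{t+1}$ from $x_t$ by as much as $KD\gamma_t$ (Lemma~\ref{lem:x}), whereas the surrogate decrease only benefits from the \emph{first} inner step via Lemma~\ref{lem:sub}. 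Balancing this asymmetry with the choice $\eta_t = \lambda_t^-/L$, which makes the $H_t$-quadratic dominate the Euclidean one and thereby absorbs the $K$-dependent Euclidean term on the right-hand side into $Q_t(x_{t+1})$, is what produces the clean $K+1+\kappa$ coefficient.
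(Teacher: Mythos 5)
Your proposal is correct and follows essentially the same route as the paper's proof: smoothness plus $\eta_t=\lambda_t^-/L$ to absorb the Euclidean quadratic into $Q_t(x_{t+1})$, Lemma~\ref{lem:sub} to pass to the first inner step, the standard Frank--Wolfe linearization at $x^*$, and Lemmata~\ref{lem:sfw} and~\ref{lem:x} to control the two noise cross-terms, yielding the identical recursion $\mathsf{E}[\epsilon_{t+1}]\leq\tfrac{t}{t+2}\mathsf{E}[\epsilon_t]+\tfrac{2LD^2(K+1+\kappa)}{(t+2)^2}$. The only (immaterial) difference is that you close the recursion by induction whereas the paper multiplies by $(t+1)(t+2)$ and telescopes.
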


\begin{proof}
Let $t\in\llbracket0,T-1\rrbracket$. By~\eqref{h2},
\begin{align}
 \frac{L}{2}\|\cdot\|_2^2
 \leq\frac{L}{2\lambda_t^-}\|\cdot\|_{H_t}^2
 =\frac{1}{2\eta_t}\|\cdot\|_{H_t}^2\label{2-h}
\end{align}
and
\begin{align}
 \frac{1}{2\eta_t}\|\cdot\|_{H_t}^2
 \leq\frac{\lambda_t^+}{2\eta_t}\|\cdot\|_2^2
 =\frac{L}{2}\frac{\lambda_t^+}{\lambda_t^-}\|\cdot\|_2^2
 \leq\frac{L\kappa}{2}\|\cdot\|_2^2.\label{h-2}
\end{align}
By smoothness of $f$ and~\eqref{2-h},
\begin{align*}
 f(x_{t+1})
 &\leq f(x_t)+\langle\nabla f(x_t),x_{t+1}-x_t\rangle+\frac{L}{2}\|x_{t+1}-x_t\|_2^2\\
 &\leq f(x_t)+\langle\nabla f(x_t),x_{t+1}-x_t\rangle+\frac{1}{2\eta_t}\|x_{t+1}-x_t\|_{H_t}^2\\
 &=f(x_t)+\langle\tilde{\nabla}f(x_t),x_{t+1}-x_t\rangle+\frac{1}{2\eta_t}\|x_{t+1}-x_t\|_{H_t}^2+\langle\nabla f(x_t)-\tilde{\nabla}f(x_t),x_{t+1}-x_t\rangle\\
 &=Q_t(x_{t+1})+\langle\nabla f(x_t)-\tilde{\nabla}f(x_t),x_{t+1}-x_t\rangle\\
 &=Q_t(y_K^{(t)})+\langle\nabla f(x_t)-\tilde{\nabla}f(x_t),x_{t+1}-x_t\rangle\\
 &\leq Q_t(y_0^{(t)}+\gamma_t(v_0^{(t)}-y_0^{(t)}))+\|\nabla f(x_t)-\tilde{\nabla}f(x_t)\|_2\|x_{t+1}-x_t\|_2,
\end{align*}
by Lemma~\ref{lem:sub} and the Cauchy-Schwarz inequality. Recall that $y_0^{(t)}=x_t$ and let $v_t\coloneqq v_0^{(t)}$. Then,
\begin{align}
 f(x_{t+1})
 &\leq Q_t(x_t+\gamma_t(v_t-x_t))+\|\nabla f(x_t)-\tilde{\nabla}f(x_t)\|_2\|x_{t+1}-x_t\|_2\nonumber\\
 &=f(x_t)+\gamma_t\langle\tilde{\nabla}f(x_t),v_t-x_t\rangle+\frac{\gamma_t^2}{2\eta_t}\|v_t-x_t\|_{H_t}^2+\|\nabla f(x_t)-\tilde{\nabla}f(x_t)\|_2\|x_{t+1}-x_t\|_2.\label{proof:1}
\end{align}
Let $x^*\in\argmin_\mathcal{C}f$. Since $\nabla Q_t(y_0^{(t)})=\tilde{\nabla}f(x_t)$, we have $v_t\in\argmin_{v\in\mathcal{C}}\langle\tilde{\nabla}f(x_t),v\rangle$ so
\begin{align}
 \langle\tilde{\nabla}f(x_t),v_t-x_t\rangle
 &\leq\langle\tilde{\nabla}f(x_t),x^*-x_t\rangle\nonumber\\
 &=\langle\nabla f(x_t),x^*-x_t\rangle+\langle\tilde{\nabla}f(x_t)-\nabla f(x_t),x^*-x_t\rangle\nonumber\\
 &\leq f(x^*)-f(x_t)+\|\tilde{\nabla}f(x_t)-\nabla f(x_t)\|_2\|x^*-x_t\|_2,\label{proof:x*}
\end{align}
by convexity of $f$ and the Cauchy-Schwarz inequality. Let $\epsilon_t\coloneqq f(x_t)-\min_\mathcal{C}f$ for all $t\in\llbracket0,T\rrbracket$. Combining~\eqref{proof:1} and~\eqref{proof:x*}, subtracting both sides by $\min_\mathcal{C}f$, and taking the expectation, we obtain
\begin{align}
 \mathsf{E}[\epsilon_{t+1}]
 &\leq(1-\gamma_t)\mathsf{E}[\epsilon_t]+\gamma_t\mathsf{E}[\|\tilde{\nabla}f(x_t)-\nabla f(x_t)\|_2\|x^*-x_t\|_2]+\frac{\gamma_t^2}{2\eta_t}\mathsf{E}[\|v_t-x_t\|_{H_t}^2]\nonumber\\
 &\quad+\mathsf{E}[\|\nabla f(x_t)-\tilde{\nabla}f(x_t)\|_2\|x_{t+1}-x_t\|_2]\nonumber\\
 &\leq(1-\gamma_t)\mathsf{E}[\epsilon_t]+\gamma_t\mathsf{E}[\|\tilde{\nabla}f(x_t)-\nabla f(x_t)\|_2]D+\frac{\gamma_t^2}{2}L\kappa D^2+\mathsf{E}[\|\nabla f(x_t)-\tilde{\nabla}f(x_t)\|_2]KD\gamma_t,\label{proof:sfw}
\end{align}
where we used~\eqref{h-2} and Lemma~\ref{lem:x}. By Lemma~\ref{lem:sfw}, and with $b_t=\big(G(t+2)/(LD)\big)^2$ and $\gamma_t=2/(t+2)$,
\begin{align*}
 \mathsf{E}[\epsilon_{t+1}]
 &\leq(1-\gamma_t)\mathsf{E}[\epsilon_t]+\gamma_t\frac{G}{\sqrt{b_t}}D+\frac{\gamma_t^2}{2}L\kappa D^2+\frac{G}{\sqrt{b_t}}KD\gamma_t\\
 &=\frac{t}{t+2}\mathsf{E}[\epsilon_t]+\frac{2}{t+2}\frac{LD}{t+2}D+\frac{2}{(t+2)^2}L\kappa D^2+\frac{LD}{t+2}\frac{2KD}{t+2}\\
 &=\frac{t}{t+2}\mathsf{E}[\epsilon_t]+\frac{C}{(t+2)^2},
\end{align*}
where $C\coloneqq2LD^2(K+1+\kappa)$. Thus,
\begin{align*}
 (t+1)(t+2)\mathsf{E}[\epsilon_{t+1}]
 \leq t(t+1)\mathsf{E}[\epsilon_t]+\frac{C(t+1)}{t+2},
\end{align*}
so, by telescoping,
\begin{align*}
 t(t+1)\mathsf{E}[\epsilon_t]
 &\leq0\cdot1\cdot\mathsf{E}[\epsilon_0]+\sum_{s=0}^{t-1}\frac{C(s+1)}{s+2}\\
 &\leq Ct
\end{align*}
for all $t\in\llbracket1,T\rrbracket$. Therefore,
\begin{align*}
 \mathsf{E}[\epsilon_t]
 \leq\frac{C}{t+1}
\end{align*}
for all $t\in\llbracket1,T\rrbracket$.
\end{proof}

\begin{theorem}[(Theorem~\ref{th:ncvx})]
 Suppose that $f_1,\ldots,f_m$ are not necessarily convex. Consider AdaSFW (Algorithm~\ref{adasfw}) with $b_t\leftarrow\big(G/(LD)\big)^2(t+1)$, $\eta_t\leftarrow\lambda_t^-/L$, and $\gamma_t\leftarrow1/(t+1)^{1/2+\nu}$ where $\nu\in\left]0,1/2\right[$, and let $\kappa\coloneqq\lambda_0^+/\lambda_0^-$. For all $t\in\llbracket0,T\rrbracket$, let $X_t$ be sampled uniformly at random from $\{x_0,\ldots,x_t\}$. Then for all $t\in\llbracket0,T-1\rrbracket$,
 \begin{align*}
  \mathsf{E}[g(X_t)]
  \leq\frac{(f(x_0)-\min_\mathcal{C}f)+LD^2(K+1+\kappa/2)S}{(t+1)^{1/2-\nu}},
 \end{align*}
 where $S\coloneqq\sum_{s=0}^{+\infty}1/(s+1)^{1+\nu}\in\mathbb{R}_+$. Alternatively, if the time horizon $T$ is fixed, then with $b_t\leftarrow(G/(LD))^2T$ and $\gamma_t\leftarrow1/\sqrt{T}$, 
 \begin{align*}
  \mathsf{E}[g(X_{T-1})]
  \leq\frac{(f(x_0)-\min_\mathcal{C}f)+LD^2(K+1+\kappa/2)}{\sqrt{T}}.
 \end{align*}
\end{theorem}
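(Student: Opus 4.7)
The plan is to mimic the proof of Theorem~\ref{th:adasfw} up through the analog of~\eqref{proof:1}, and then replace the convexity-based detour through the optimum $x^*$ (which underlies~\eqref{proof:x*}) with a detour through a duality-gap maximizer. Concretely, first apply smoothness, Lemma~\ref{lem:sub}, and the definition of $Q_t$ to obtain
\[
 f(x_{t+1})\leq f(x_t)+\gamma_t\langle\tilde{\nabla}f(x_t),v_t-x_t\rangle+\frac{\gamma_t^2}{2\eta_t}\|v_t-x_t\|_{H_t}^2+\|\nabla f(x_t)-\tilde{\nabla}f(x_t)\|_2\|x_{t+1}-x_t\|_2,
\]
where $v_t\coloneqq v_0^{(t)}$; convexity of $f$ is not used here. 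Then pick $w_t\in\argmax_{v\in\mathcal{C}}\langle\nabla f(x_t),x_t-v\rangle$, so that $g(x_t)=\langle\nabla f(x_t),x_t-w_t\rangle$. Since $v_t\in\argmin_{v\in\mathcal{C}}\langle\tilde{\nabla}f(x_t),v\rangle$ and $w_t\in\mathcal{C}$,
\[
 \langle\tilde{\nabla}f(x_t),v_t-x_t\rangle\leq\langle\tilde{\nabla}f(x_t),w_t-x_t\rangle=-g(x_t)+\langle\tilde{\nabla}f(x_t)-\nabla f(x_t),w_t-x_t\rangle,
\]
which is the nonconvex analog of~\eqref{proof:x*}.

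Substituting this into the previous display, bounding $\frac{1}{2\eta_t}\|v_t-x_t\|_{H_t}^2\leq\frac{L\kappa}{2}D^2$ via~\eqref{h-2}, $\|w_t-x_t\|_2\leq D$ by definition of $D$, and $\|x_{t+1}-x_t\|_2\leq KD\gamma_t$ via Lemma~\ref{lem:x}, then rearranging yields
\[
 \gamma_t\,g(x_t)\leq\bigl(f(x_t)-f(x_{t+1})\bigr)+(K+1)D\gamma_t\,\|\nabla f(x_t)-\tilde{\nabla}f(x_t)\|_2+\frac{L\kappa D^2}{2}\gamma_t^2.
\]
Taking expectations and invoking Lemma~\ref{lem:sfw} with $b_s=(G/(LD))^2(s+1)$ gives $\mathsf{E}[\|\nabla f(x_s)-\tilde{\nabla}f(x_s)\|_2]\leq LD/\sqrt{s+1}$, so
\[
 \gamma_s\mathsf{E}[g(x_s)]\leq\mathsf{E}[f(x_s)-f(x_{s+1})]+\frac{LD^2(K+1)\gamma_s}{\sqrt{s+1}}+\frac{L\kappa D^2}{2}\gamma_s^2.
\]
Summing over $s\in\llbracket0,t\rrbracket$ telescopes the first term to at most $f(x_0)-\min_\mathcal{C}f$; with $\gamma_s=1/(s+1)^{1/2+\nu}$, we have $\gamma_s/\sqrt{s+1}=1/(s+1)^{1+\nu}$ and $\gamma_s^2\leq1/(s+1)^{1+\nu}$, and both weighted sums are dominated by the convergent series $S=\sum_{s\geq0}1/(s+1)^{1+\nu}<+\infty$ (this is where $\nu>0$ is critical).

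The final step uses monotonicity: $\gamma_s$ is decreasing in $s$, so $\sum_{s=0}^t\gamma_s\mathsf{E}[g(x_s)]\geq\gamma_t\sum_{s=0}^t\mathsf{E}[g(x_s)]=\gamma_t(t+1)\mathsf{E}[g(X_t)]$ by the uniform sampling definition of $X_t$. Dividing by $\gamma_t(t+1)=(t+1)^{1/2-\nu}$ delivers the first bound. The fixed-horizon variant follows by repeating the same summation with the constant schedule $\gamma_t=1/\sqrt{T}$ and $b_t=(G/(LD))^2T$: each summand is bounded by a constant over $\sqrt{T}$, the telescoping part is again at most $f(x_0)-\min_\mathcal{C}f$, and no infinite-series tail appears, so the numerator $LD^2(K+1+\kappa/2)$ is sharp. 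The only genuinely new ingredient is the swap from $x^*$ to $w_t$; the remainder is parameter bookkeeping already worked out in the proof of Theorem~\ref{th:adasfw}, and the main delicate choice is ensuring $\nu>0$ strictly so that $S$ is finite, which forces the rate $(t+1)^{-(1/2-\nu)}$ rather than the cleaner $1/\sqrt{t+1}$ only attainable with a horizon-dependent schedule.
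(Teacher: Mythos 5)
Your proposal is correct and follows essentially the same route as the paper's proof: the same reduction from~\eqref{proof:1}, the same swap of $x^*$ for a duality-gap maximizer $w_t$, the same bounds via~\eqref{h-2}, Lemma~\ref{lem:x}, and Lemma~\ref{lem:sfw}, and the same telescoping-plus-monotonicity argument $\sum_{s=0}^t\gamma_s\mathsf{E}[g(x_s)]\geq\gamma_t(t+1)\mathsf{E}[g(X_t)]$ for both schedules. The constants also work out identically.
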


\begin{proof}
 For all $t\in\llbracket0,T\rrbracket$, let $\mathsf{E}_t$ denote the conditional expectation with respect to the realization of $X_t$ given $\{x_0,\ldots,x_t\}$. Recall that $\mathsf{E}$ denotes the expectation with respect to all the randomness in the system. Let $t\in\llbracket0,T-1\rrbracket$. By~\eqref{proof:1}, 
 \begin{align*}
  f(x_{t+1})
  \leq f(x_t)+\gamma_t\langle\tilde{\nabla}f(x_t),v_t-x_t\rangle+\frac{\gamma_t^2}{2\eta_t}\|v_t-x_t\|_{H_t}^2+\|\nabla f(x_t)-\tilde{\nabla}f(x_t)\|_2\|x_{t+1}-x_t\|_2.
 \end{align*}
 Let $w_t\in\argmin_{v\in\mathcal{C}}\langle\nabla f(x_t),v\rangle$ and note that $g(x_t)=\langle\nabla f(x_t),x_t-w_t\rangle$. Then, since $v_t\in\argmin_{v\in\mathcal{C}}\langle\tilde{\nabla}f(x_t),v\rangle$,
 \begin{align*}
  f(x_{t+1})
  &\leq f(x_t)+\gamma_t\langle\tilde{\nabla}f(x_t),w_t-x_t\rangle+\frac{\gamma_t^2}{2\eta_t}\|v_t-x_t\|_{H_t}^2+\|\nabla f(x_t)-\tilde{\nabla}f(x_t)\|_2\|x_{t+1}-x_t\|_2\\
  &=f(x_t)+\gamma_t\langle\nabla f(x_t),w_t-x_t\rangle+\gamma_t\langle\tilde{\nabla}f(x_t)-\nabla f(x_t),w_t-x_t\rangle\\
  &\quad+\frac{\gamma_t^2}{2\eta_t}\|v_t-x_t\|_{H_t}^2+\|\nabla f(x_t)-\tilde{\nabla}f(x_t)\|_2\|x_{t+1}-x_t\|_2\\
  &\leq f(x_t)-\gamma_tg(x_t)+\gamma_t\|\tilde{\nabla}f(x_t)-\nabla f(x_t)\|_2D+\gamma_t^2\frac{L\kappa}{2}D^2+\|\nabla f(x_t)-\tilde{\nabla}f(x_t)\|_2KD\gamma_t,
 \end{align*}
 where we used the Cauchy-Schwarz inequality, \eqref{h-2}, and Lemma~\ref{lem:x} in the last inequality. By Lemma~\ref{lem:sfw}, we obtain
 \begin{align}
  \mathsf{E}[f(x_{t+1})]
  &\leq\mathsf{E}[f(x_t)]-\gamma_t\mathsf{E}[g(x_t)]+\gamma_t\frac{G}{\sqrt{b_t}}D+\gamma_t^2\frac{L\kappa}{2}D^2+\frac{G}{\sqrt{b_t}}KD\gamma_t\label{nu}\\
  &=\mathsf{E}[f(x_t)]-\gamma_t\mathsf{E}[g(x_t)]+\frac{LD^2}{(t+1)^{1+\nu}}+\frac{L\kappa D^2}{2(t+1)^{1+2\nu}}+\frac{KLD^2}{(t+1)^{1+\nu}}\nonumber\\
  &\leq\mathsf{E}[f(x_t)]-\gamma_t\mathsf{E}[g(x_t)]+\frac{LD^2(K+1+\kappa/2)}{(t+1)^{1+\nu}},\nonumber
 \end{align}
 so, by telescoping,
 \begin{align*}
  \sum_{s=0}^t\gamma_s\mathsf{E}[g(x_s)]
  &\leq\mathsf{E}[f(x_0)]-\mathsf{E}[f(x_{t+1})]+\sum_{s=0}^t\frac{LD^2(K+1+\kappa/2)}{(s+1)^{1+\nu}}\\
  &\leq\left(f(x_0)-\min_\mathcal{C}f\right)+LD^2(K+1+\kappa/2)S
 \end{align*}
 and
 \begin{align*}
  \sum_{s=0}^t\gamma_s\mathsf{E}[g(x_s)]
  &\geq\gamma_t\sum_{s=0}^t\mathsf{E}[g(x_s)]\\
  &=\gamma_t(t+1)\mathsf{E}\!\left[\sum_{s=0}^t\frac{1}{t+1}g(x_s)\right]\\
  &=(t+1)^{1/2-\nu}\mathsf{E}[\mathsf{E}_t[g(X_t)]]\\
  &=(t+1)^{1/2-\nu}\mathsf{E}[g(X_t)],
 \end{align*}
 by the law of total expectation. Therefore,
 \begin{align*}
  \mathsf{E}[g(X_t)]
  \leq\frac{(f(x_0)-\min_\mathcal{C}f)+LD^2(K+1+\kappa/2)S}{(t+1)^{1/2-\nu}}.
 \end{align*}
 At~\eqref{nu}, alternatively,
 \begin{align*}
  \mathsf{E}[f(x_{t+1})]
  &\leq\mathsf{E}[f(x_t)]-\frac{1}{\sqrt{T}}\mathsf{E}[g(x_t)]+\frac{LD^2}{T}+\frac{1}{T}\frac{L\kappa}{2}D^2+\frac{KLD^2}{T}\\
  &=\mathsf{E}[f(x_t)]-\frac{1}{\sqrt{T}}\mathsf{E}[g(x_t)]+\frac{LD^2(K+1+\kappa/2)}{T},
 \end{align*}
 so, by telescoping,
 \begin{align*}
  \frac{1}{\sqrt{T}}\sum_{t=0}^{T-1}\mathsf{E}[g(x_t)]
  &\leq\mathsf{E}[f(x_0)]-\mathsf{E}[f(x_T)]+LD^2(K+1+\kappa/2)\\
  &\leq\left(f(x_0)-\min_\mathcal{C}f\right)+LD^2(K+1+\kappa/2)
 \end{align*}
 and
 \begin{align*}
  \frac{1}{\sqrt{T}}\sum_{t=0}^{T-1}\mathsf{E}[g(x_t)]
  &=\frac{T}{\sqrt{T}}\mathsf{E}\!\left[\sum_{t=0}^{T-1}\frac{1}{T}g(x_t)\right]\\
  &=\sqrt{T}\mathsf{E}[\mathsf{E}_{T-1}[g(X_{T-1})]]\\
  &=\sqrt{T}\mathsf{E}[g(X_{T-1})],
 \end{align*}
 by the law of total expectation. Therefore,
 \begin{align*}
  \mathsf{E}[g(X_{T-1})]
  \leq\frac{(f(x_0)-\min_\mathcal{C}f)+LD^2(K+1+\kappa/2)}{\sqrt{T}}.
 \end{align*}
\end{proof}

\subsection{SVRF with adaptive gradients}

Lemma~\ref{lem:svrf} is a slight modification of \citet[Lemma~1]{hazan16}.

\begin{lemma}
 \label{lem:svrf}
 Consider AdaSVRF (Algorithm~\ref{adasvrf}). For all $t\in\llbracket0,T-1\rrbracket$,
 \begin{align*}
  \mathsf{E}[\|\tilde{\nabla}f(x_t)-\nabla f(x_t)\|_2^2]
  &\leq\frac{4L}{b_t}\left(\mathsf{E}\left[f(x_t)-\min_\mathcal{C}f\right]+\mathsf{E}\left[f(\tilde{x}_t)-\min_\mathcal{C}f\right]\right).
 \end{align*}
\end{lemma}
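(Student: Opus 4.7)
The plan is to adapt the classical SVRG variance bound of \citet{svrg13} to this constrained finite-sum setting. First I would condition on the history up to iteration $t$, writing $\mathsf{E}_t$ for this conditional expectation. At a snapshot time $t=s_k$ the estimator satisfies $\tilde{\nabla}f(x_t)=\nabla f(\tilde{x}_t)=\nabla f(x_t)$, so the inequality is trivial; in what follows I focus on the generic case $t\in\llbracket s_k+1,s_{k+1}-1\rrbracket$. Define the conditionally i.i.d.\ random vectors $W_j\coloneqq\nabla f_{i_j}(x_t)-\nabla f_{i_j}(\tilde{x}_t)$ with conditional mean $\mathsf{E}_t[W_j]=\nabla f(x_t)-\nabla f(\tilde{x}_t)$. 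Rearranging the definition of $\tilde\nabla f(x_t)$ gives $\tilde{\nabla}f(x_t)-\nabla f(x_t)=\tfrac{1}{b_t}\sum_{j=1}^{b_t}(W_j-\mathsf{E}_t[W_j])$, so conditional independence together with $\mathrm{Var}(X)\leq\mathsf{E}[\|X\|_2^2]$ yields
\begin{align*}
 \mathsf{E}_t[\|\tilde{\nabla}f(x_t)-\nabla f(x_t)\|_2^2]
 \leq\frac{1}{b_t}\,\mathsf{E}_t[\|W_1\|_2^2]
 =\frac{1}{m b_t}\sum_{i=1}^m\|\nabla f_i(x_t)-\nabla f_i(\tilde{x}_t)\|_2^2.
\end{align*}

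Next I would introduce the optimal anchor $x^*\in\argmin_\mathcal{C}f$ via the elementary bound $\|a-b\|_2^2\leq 2\|a\|_2^2+2\|b\|_2^2$ with $a=\nabla f_i(x_t)-\nabla f_i(x^*)$ and $b=\nabla f_i(\tilde{x}_t)-\nabla f_i(x^*)$. Each of the two resulting terms can be controlled by the standard consequence of $L$-smoothness and convexity of $f_i$, namely $\|\nabla f_i(x)-\nabla f_i(x^*)\|_2^2\leq 2L\bigl(f_i(x)-f_i(x^*)-\langle\nabla f_i(x^*),x-x^*\rangle\bigr)$. Averaging over $i\in\llbracket1,m\rrbracket$ and discarding the inner-product term using the first-order optimality of $x^*$ on $\mathcal{C}$, i.e.\ $\langle\nabla f(x^*),x-x^*\rangle\geq0$ for every $x\in\mathcal{C}$ (applied at $x=x_t$ and $x=\tilde{x}_t$), collapses the two terms into $\tfrac{1}{m}\sum_i\|\nabla f_i(x)-\nabla f_i(x^*)\|_2^2\leq 2L(f(x)-\min_\mathcal{C}f)$.

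Combining these bounds produces
\begin{align*}
 \mathsf{E}_t[\|\tilde{\nabla}f(x_t)-\nabla f(x_t)\|_2^2]
 \leq\frac{4L}{b_t}\bigl((f(x_t)-\min_\mathcal{C}f)+(f(\tilde{x}_t)-\min_\mathcal{C}f)\bigr),
\end{align*}
and taking total expectations via the tower property finishes the proof. The only subtle step is the discarding of the cross terms $\langle\nabla f(x^*),x-x^*\rangle$: this is precisely where convexity of $f$ and of $\mathcal{C}$, together with $x_t,\tilde{x}_t\in\mathcal{C}$, are essential, and it is the reason the bound takes the clean form involving primal suboptimality rather than gradient norms. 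Everything else is bookkeeping, with the factor $4L$ emerging as the product of the $2$ from the triangle-style inequality and the $2L$ from the smooth-convex gradient bound.
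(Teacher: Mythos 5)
Your proposal is correct and follows essentially the same route as the paper's proof: reduce to the per-sample second moment via conditional independence of the minibatch, insert the anchor $x^*$ with the inequality $\|a-b\|_2^2\leq2\|a\|_2^2+2\|b\|_2^2$, apply the smooth-convex bound $\|\nabla f_i(x)-\nabla f_i(x^*)\|_2^2\leq2L\bigl(f_i(x)-f_i(x^*)-\langle\nabla f_i(x^*),x-x^*\rangle\bigr)$, and discard the linear terms by first-order optimality of $x^*$ over $\mathcal{C}$. The only cosmetic difference is that you bound the variance by the second moment before splitting around $x^*$ whereas the paper splits first and then drops the centering, and your explicit appeal to $\langle\nabla f(x^*),x-x^*\rangle\geq0$ is in fact a cleaner justification of the final step than the paper's terse ``by convexity of $f$.''
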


\begin{proof}
 Let $t\in\llbracket0,T-1\rrbracket$, $\mathsf{E}_t$ denote the conditional expectation with respect to the realization of $i_1,\ldots,i_{b_t}$ given all the randomness in the past (hence, $\tilde{x}_t$ and $x_t$ are given), and $x^*\in\argmin_\mathcal{C}f$. For all $i\in\{i_1,\ldots,i_{b_t}\}$,
 \begin{align*}
  &\mathsf{E}_t[\|\nabla f(x_t)-(\nabla f(\tilde{x}_t)+\nabla f_i(x_t)-\nabla f_i(\tilde{x}_t))\|_2^2]\\
  &=\mathsf{E}_t[\|\nabla f(x_t)-\nabla f(x^*)+\nabla f(x^*)-\nabla f(\tilde{x}_t)-\nabla f_i(x_t)+\nabla f_i(x^*)-\nabla f_i(x^*)+\nabla f_i(\tilde{x}_t)\|_2^2]\\
  &\leq2(\mathsf{E}_t[\|\nabla f(x_t)-\nabla f(x^*)-\nabla f_i(x_t)+\nabla f_i(x^*)\|_2^2]+\mathsf{E}_t[\|\nabla f(x^*)-\nabla f(\tilde{x}_t)-\nabla f_i(x^*)+\nabla f_i(\tilde{x}_t)\|_2^2]),
 \end{align*}
 where we used $(a+b)^2\leq2(a^2+b^2)$ for all $a,b\in\mathbb{R}$. Since $\mathsf{E}_t[\nabla f_i(x)]=\nabla f(x)$ for all $i\in\{i_1,\ldots,i_{b_t}\}$ and $x\in\{x^*,\tilde{x}_t,x_t\}$, then the first term above is the variance of $\nabla f_i(x_t)-\nabla f_i(x^*)$ and the second term above is the variance of $\nabla f_i(\tilde{x}_t)-\nabla f_i(x^*)$, both with respect to $\mathsf{E}_t$. The variance of a random variable being upper bounded by its second moment, we obtain
 \begin{align*}
  &\mathsf{E}_t[\|\nabla f(x_t)-(\nabla f(\tilde{x}_t)+\nabla f_i(x_t)-\nabla f_i(\tilde{x}_t))\|_2^2]\\
  &\leq2(\mathsf{E}_t[\|\nabla f_i(x_t)-\nabla f_i(x^*)\|_2^2]+\mathsf{E}_t[\|\nabla f_i(\tilde{x}_t)-\nabla f_i(x^*)\|_2^2])\\
  &\leq4L(\mathsf{E}_t[f_i(x_t)-f_i(x^*)-\langle\nabla f_i(x^*),x_t-x^*\rangle]+\mathsf{E}_t[f_i(\tilde{x}_t)-f_i(x^*)-\langle\nabla f_i(x^*),\tilde{x}_t-x^*\rangle])\\
  &=4L(\mathsf{E}_t[f_i(x_t)]-\mathsf{E}_t[f_i(x^*)]-\langle\mathsf{E}_t[\nabla f_i(x^*)],x_t-x^*\rangle+\mathsf{E}_t[f_i(\tilde{x}_t)]-\mathsf{E}_t[f_i(x^*)]-\langle\mathsf{E}_t[\nabla f_i(x^*)],\tilde{x}_t-x^*\rangle)\\
  &=4L(f(x_t)-f(x^*)-\langle\nabla f(x^*),x_t-x^*\rangle+f(\tilde{x}_t)-f(x^*)-\langle\nabla f(x^*),\tilde{x}_t-x^*\rangle),
  \end{align*}
  by $L$-smoothness of $f_i$ for all $i\in\{i_1,\ldots,i_{b_t}\}$ and taking the conditional expectation. By convexity of $f$,
  \begin{align*}
  \mathsf{E}_t[\|\nabla f(x_t)-(\nabla f(\tilde{x}_t)+\nabla f_i(x_t)-\nabla f_i(\tilde{x}_t))\|_2^2]
  &\leq4L(f(x_t)-f(x^*)+f(\tilde{x}_t)-f(x^*)).
 \end{align*}
 By the law of total expectation, 
 \begin{align*}
  \mathsf{E}[\|\nabla f(x_t)-\tilde{\nabla}f(x_t)\|_2^2]
  &=\mathsf{E}\!\left[\left\|\frac{1}{b_t}\sum_{i=i_1}^{i_{b_t}}\big(\nabla f(x_t)-(\nabla f(\tilde{x}_t)+\nabla f_i(x_t)-\nabla f_i(\tilde{x}_t))\big)\right\|_2^2\right]\\
  &\leq\frac{1}{b_t^2}\sum_{i=i_1}^{i_{b_t}}\mathsf{E}[\|\nabla f(x_t)-(\nabla f(\tilde{x}_t)+\nabla f_i(x_t)-\nabla f_i(\tilde{x}_t))\|_2^2]\\
  &\leq\frac{4L}{b_t}(\mathsf{E}[f(x_t)-f(x^*)]+\mathsf{E}[f(\tilde{x}_t)-f(x^*)]).
 \end{align*}
\end{proof}

\begin{theorem}[(Theorem~\ref{th:adasvrf})]
 Consider AdaSVRF (Algorithm~\ref{adasvrf}) with $s_k\leftarrow2^k-1$, $b_t\leftarrow24(K+1+\kappa)(t+2)$ where $\kappa\coloneqq\lambda_0^+/\lambda_0^-$, $\eta_t\leftarrow\lambda_t^-/L$, and $\gamma_t\leftarrow2/(t+2)$. Then for all $t\in\llbracket1,T\rrbracket$,
 \begin{align}
  \mathsf{E}[f(x_t)]-\min_\mathcal{C}f
  \leq\frac{2LD^2(K+1+\kappa)}{t+2}.\label{adasvrf:res}
 \end{align}
\end{theorem}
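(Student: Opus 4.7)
The plan is to adapt the AdaSFW argument (Theorem~\ref{th:adasfw}) by swapping Lemma~\ref{lem:sfw} for Lemma~\ref{lem:svrf} and proving the bound by strong induction on $t$ instead of telescoping. The first step is to observe that the derivation of~\eqref{proof:sfw} in the AdaSFW proof uses nothing specific to the SFW estimator: only smoothness, Lemma~\ref{lem:sub}, convexity, Cauchy--Schwarz, \eqref{h-2}, and Lemma~\ref{lem:x} are invoked. Grouping the two gradient-error terms, I carry over the universal recursion
\begin{equation*}
 \mathsf{E}[\epsilon_{t+1}] \leq (1-\gamma_t)\,\mathsf{E}[\epsilon_t] + (K+1)D\gamma_t\,\mathsf{E}[\|\tilde\nabla f(x_t) - \nabla f(x_t)\|_2] + \frac{L\kappa D^2}{2}\gamma_t^2,
\end{equation*}
where $\epsilon_t \coloneqq f(x_t) - \min_\mathcal{C}f$; the rest of the proof consists in bounding the gradient-error term.

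The induction hypothesis is $\mathsf{E}[\epsilon_t] \leq C/(t+2)$ for $t \in \llbracket 1, T \rrbracket$, with $C \coloneqq 2LD^2(K+1+\kappa)$. The base case $t = 1$ is immediate: since $0 = s_0$, $\tilde\nabla f(x_0) = \nabla f(x_0)$, the middle term vanishes, and $\gamma_0 = 1$ yields $\mathsf{E}[\epsilon_1] \leq L\kappa D^2/2 \leq C/3$. For the induction step, the crucial bookkeeping is the snapshot relation: writing $\tilde\epsilon_t \coloneqq f(\tilde x_t) - \min_\mathcal{C}f$, for $t \in \llbracket s_k, s_{k+1}-1 \rrbracket$ the schedule $s_k = 2^k - 1$ forces $t+2 \leq 2^{k+1} < 2(s_k+2)$, and since $\tilde x_t = x_{s_k}$ the inductive hypothesis gives $\mathsf{E}[\tilde\epsilon_t] \leq C/(s_k+2) \leq 2C/(t+2)$, hence $\mathsf{E}[\epsilon_t] + \mathsf{E}[\tilde\epsilon_t] \leq 3C/(t+2)$.

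Jensen's inequality and Lemma~\ref{lem:svrf} then combine with the prescribed batch-size $b_t = 24(K+1+\kappa)(t+2)$ to give $\mathsf{E}[\|\tilde\nabla f(x_t) - \nabla f(x_t)\|_2]^2 \leq (4L/b_t)\cdot(3C/(t+2)) = L^2D^2/(t+2)^2$, whence $\mathsf{E}[\|\tilde\nabla f(x_t) - \nabla f(x_t)\|_2] \leq LD/(t+2)$. Plugging this into the recursion together with $\gamma_t = 2/(t+2)$ yields $\mathsf{E}[\epsilon_{t+1}] \leq tC/(t+2)^2 + C/(t+2)^2 = (t+1)C/(t+2)^2$, and one closes using $(t+1)(t+3) \leq (t+2)^2$. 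Snapshot iterations $t \in \{s_k\}$ only simplify the argument, since the gradient is exact and the variance term vanishes outright.

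The main obstacle --- really the only substantive one --- is the joint calibration of constants. The coefficient $24$ in $b_t$ is not arbitrary: it is precisely what makes $(4L/b_t)\cdot 3C/(t+2)$ collapse to $L^2D^2/(t+2)^2$, which in turn makes the variance contribution to the recursion match the curvature term up to the constant $C$ needed to activate $(t+1)C/(t+2)^2 \leq C/(t+3)$. The factor $3$ in $\mathsf{E}[\epsilon_t] + \mathsf{E}[\tilde\epsilon_t] \leq 3C/(t+2)$ is itself dictated by the doubling schedule, and the more general schedule $s_k = 2^{k+k_0} - 2^{k_0}$ of Remark~\ref{rem:svrf} replaces the factor $2$ in the snapshot ratio by roughly $2^{k_0+1}$, explaining the modified constant $8(2^{k_0+1}+1)$ there.
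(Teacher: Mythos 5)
Your proposal is correct and follows essentially the same route as the paper's proof: the estimator-independent recursion \eqref{proof:sfw}, strong induction with the snapshot bound $t+2\leq2(s_k+2)$ giving $\mathsf{E}[\epsilon_t]+\mathsf{E}[\tilde\epsilon_t]\leq 3C/(t+2)$, Lemma~\ref{lem:svrf} plus Jensen to get $LD/(t+2)$, and the closing inequality $(t+1)(t+3)\leq(t+2)^2$. The constant calibration you describe matches the paper's exactly.
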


\begin{proof}
We proceed by strong induction. Let $\epsilon_t\coloneqq f(x_t)-\min_\mathcal{C}f$ for all $t\in\llbracket0,T\rrbracket$. By~\eqref{proof:sfw}, 
\begin{align*}
 \mathsf{E}[\epsilon_{t+1}]
 &\leq(1-\gamma_t)\mathsf{E}[\epsilon_t]+\gamma_t\mathsf{E}[\|\tilde{\nabla}f(x_t)-\nabla f(x_t)\|_2]D+\frac{\gamma_t^2}{2}L\kappa D^2+\mathsf{E}[\|\nabla f(x_t)-\tilde{\nabla}f(x_t)\|_2]KD\gamma_t,
\end{align*}
for all $t\in\llbracket0,T-1\rrbracket$. If $t=0$ then, since $s_0=0$, we have $\tilde{\nabla}f(x_0)=\nabla f(x_0)$ so 
\begin{align*}
 \mathsf{E}[\epsilon_1]
 &\leq(1-\gamma_0)\mathsf{E}[\epsilon_0]+\frac{\gamma_0^2}{2}L\kappa D^2\\
 &=\frac{LD^2\kappa}{2},
\end{align*}
because $\gamma_0=1$, so the base case holds. Suppose~\eqref{adasvrf:res} holds for all $t'\in\llbracket1,t\rrbracket$ for some $t\in\llbracket1,T-1\rrbracket$. There exist $k,\ell\in\mathbb{N}$ such that $t=s_k+\ell$ and $\ell\leq s_{k+1}-s_k-1$. That is, $s_k$ is the last snapshot time and $\tilde{x}_t=x_{s_k}$. Note that this implies $\ell\leq2^k-1=s_k$ so $t+2=s_k+\ell+2\leq2s_k+2\leq2(s_k+2)$. By Lemma~\ref{lem:svrf},
\begin{align*}
 \mathsf{E}[\|\tilde{\nabla}f(x_t)-\nabla f(x_t)\|_2^2]
 &\leq\frac{4L}{b_t}\left(\mathsf{E}\left[f(x_t)-\min_\mathcal{C}f\right]+\mathsf{E}\left[f(\tilde{x}_t)-\min_\mathcal{C}f\right]\right)\\
 &\leq\frac{4L}{b_t}\left(\frac{2LD^2(K+1+\kappa)}{t+2}+\frac{2LD^2(K+1+\kappa)}{s_k+2}\right)\\
 &\leq\frac{4L}{b_t}\left(\frac{2LD^2(K+1+\kappa)}{t+2}+\frac{4LD^2(K+1+\kappa)}{t+2}\right)\\
 &=\frac{24L^2D^2(K+1+\kappa)}{b_t(t+2)}\\
 &=\left(\frac{LD}{t+2}\right)^2,
\end{align*}
since $b_t=24(K+1+\kappa)(t+2)$. By Jensen's inequality,
\begin{align*}
 \mathsf{E}[\|\tilde{\nabla}f(x_t)-\nabla f(x_t)\|_2]
 &\leq\sqrt{\mathsf{E}[\|\tilde{\nabla}f(x_t)-\nabla f(x_t)\|_2^2]}\\
 &\leq\frac{LD}{t+2}.
\end{align*}
Thus, with $\gamma_t=2/(t+2)$,
\begin{align*}
 \mathsf{E}[\epsilon_{t+1}]
 &\leq\frac{t}{t+2}\frac{2LD^2(K+1+\kappa)}{t+2}+\frac{2}{t+2}\frac{LD}{t+2}D+\frac{2}{(t+2)^2}L\kappa D^2+\frac{LD}{t+2}KD\frac{2}{t+2}\\
 &=\frac{2LD^2(K+1+\kappa)t+2LD^2+2LD^2\kappa+2KLD^2}{(t+2)^2}\\
 &=\frac{2LD^2(K+1+\kappa)(t+1)}{(t+2)^2}\\
 &\leq\frac{2LD^2(K+1+\kappa)}{t+3}.
\end{align*}
\end{proof}

\subsection{CSFW with adaptive gradients}

Let 
\begin{align*}
 A\coloneqq
 \begin{pmatrix}
  a_1^\top\\
  \vdots\\
  a_m^\top
 \end{pmatrix}
 \in\mathbb{R}^{m\times n}
 \quad\text{and}\quad
 f'(x)\coloneqq\frac{1}{m}\sum_{i=1}^mf_i'(\langle a_i,x\rangle)e_i
\end{align*}
for all $x\in\mathbb{R}^n$, where $(e_1,\ldots,e_m)$ denotes the canonical basis of $\mathbb{R}^m$. Thus,
\begin{align}
 \nabla f(x)=\frac{1}{m}\sum_{i=1}^mf_i'(\langle a_i,x\rangle)a_i=A^\top f'(x)\label{fprime}
\end{align}
for all $x\in\mathbb{R}^n$.

Lemma~\ref{lem:csfw} is adapted from \citet[Lemmata~2 and~3]{negiar20} and uses Lemma~\ref{lem:x}.

\begin{lemma}
 \label{lem:csfw}
 Consider AdaCSFW (Algorithm~\ref{adacsfw}). For all $t\in\llbracket1,T\rrbracket$,
 \begin{align*}
  \mathsf{E}_t[\|f'(x_t)-\alpha_t\|_1]
  \leq\left(1-\frac{b}{m}\right)\left(\|f'(x_{t-1})-\alpha_{t-1}\|_1+\frac{KLD_1^A}{m}\gamma_{t-1}\right).
 \end{align*}
 where $\mathsf{E}_t$ denotes the conditional expectation with respect to the realization of $i_1,\ldots,i_b$ given all the randomness in the past. Thus, for all $t\in\llbracket0,T\rrbracket$,
 \begin{align*}
  \mathsf{E}[\|f'(x_t)-\alpha_t\|_1]
  \leq\left(1-\frac{b}{m}\right)^t\|f'(x_0)-\alpha_0\|_1
  +\frac{2KLD_1^A}{m}\left(\left(1-\frac{b}{m}\right)^{t/2}\ln\left(\frac{t}{2}+1\right)+\frac{2(m/b-1)}{t+2}\right).
 \end{align*}
\end{lemma}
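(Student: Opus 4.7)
My plan for the one-step recursive bound is to condition on the random sample $S \coloneqq \{i_1, \ldots, i_b\}$ and decompose the $\ell_1$-norm coordinatewise. By construction of $\alpha_t$, the coordinate $[f'(x_t)]_i - [\alpha_t]_i$ vanishes whenever $i \in S$ and equals $[f'(x_t)]_i - [\alpha_{t-1}]_i$ otherwise. Taking the conditional expectation $\mathsf{E}_t$ then pulls out the shrinkage factor $\Pr[i \notin S] = 1 - b/m$, producing $(1 - b/m) \|f'(x_t) - \alpha_{t-1}\|_1$. A triangle inequality finally gives
\begin{align*}
\|f'(x_t) - \alpha_{t-1}\|_1 \leq \|f'(x_t) - f'(x_{t-1})\|_1 + \|f'(x_{t-1}) - \alpha_{t-1}\|_1,
\end{align*}
where the second summand is exactly the recursive quantity on the right-hand side of the claim.

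The key technical step is bounding $\|f'(x_t) - f'(x_{t-1})\|_1 = (1/m) \sum_{i=1}^m |f_i'(\langle a_i, x_t\rangle) - f_i'(\langle a_i, x_{t-1}\rangle)|$. Using $L$-Lipschitzness of each one-dimensional derivative $f_i'$, this is at most $(L/m) \|A(x_t - x_{t-1})\|_1$. To bound the latter, I would mirror the proof of Lemma~\ref{lem:x} using the seminorm $\|A\cdot\|_1$ in place of $\|\cdot\|_2$: the telescoping identity
\begin{align*}
x_t - x_{t-1} = \sum_{k=0}^{K-1} \left(\prod_{\ell=k+1}^{K-1}(1-\gamma_\ell^{(t-1)})\right) \gamma_k^{(t-1)} (v_k^{(t-1)} - x_{t-1})
\end{align*}
remains valid, so multiplying by $A$, applying $\|\cdot\|_1$, and using $\|A(v_k^{(t-1)} - x_{t-1})\|_1 \leq D_1^A$ along with $0 \leq \gamma_k^{(t-1)} \leq \gamma_{t-1}$ yields $\|A(x_t - x_{t-1})\|_1 \leq K \gamma_{t-1} D_1^A$. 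Combining these pieces establishes the recursive inequality.

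For the explicit second bound I would iterate the recursion. Setting $\rho \coloneqq 1 - b/m$ and taking full expectation, the recursion unrolls into $\mathsf{E}[\|f'(x_t) - \alpha_t\|_1] \leq \rho^t \|f'(x_0) - \alpha_0\|_1 + (KLD_1^A/m) \sum_{s=0}^{t-1} \rho^{t-s} \gamma_s$ with $\gamma_s = 2/(s+2)$. The main obstacle is to control this geometric-harmonic sum tightly enough to recover the stated closed form. My plan is to split the sum at $s \approx t/2$: on the lower half, $\rho^{t-s} \leq \rho^{t/2}$ can be factored out and the remaining harmonic tail is bounded by a logarithm of the form $\ln(t/2+1)$; on the upper half, $\gamma_s$ can be bounded by its value at $s \approx t/2$, which is of order $1/(t+2)$, and the residual geometric series sums to at most $\rho/(1-\rho) = m/b - 1$. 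Assembling the two halves recovers both summands in the target inequality, with the delicate part being the careful accounting of constants around the $\lfloor t/2 \rfloor$ truncation needed to match the factors of $2$ in the stated bound.
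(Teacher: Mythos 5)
Your proposal matches the paper's proof essentially step for step: the conditional-expectation shrinkage by the factor $1-b/m$, the triangle inequality $\|f'(x_t)-\alpha_{t-1}\|_1\leq\|f'(x_t)-f'(x_{t-1})\|_1+\|f'(x_{t-1})-\alpha_{t-1}\|_1$, the Lipschitz bound $\|f'(x_t)-f'(x_{t-1})\|_1\leq(L/m)\|A(x_t-x_{t-1})\|_1$, and the Lemma~\ref{lem:x}-style telescoping argument giving $\|A(x_t-x_{t-1})\|_1\leq K\gamma_{t-1}D_1^A$. For the second inequality the paper simply defers to \citet[Lemma~3]{negiar20}, and your split of the geometric--harmonic sum at $s\approx t/2$ is exactly the argument behind that cited lemma, so the two proofs coincide.
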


\begin{proof}
 Let $t\in\llbracket1,T\rrbracket$ and $i_1,\ldots,i_b$ be the indices sampled at iteration $t$. For all $i\in\llbracket1,m\rrbracket$, we have
 \begin{align*}
  [\alpha_t]_i=
  \begin{cases}
   (1/m)f_i'(\langle a_i,x_t\rangle)&\text{if }i\in\{i_1,\ldots,i_b\},\\
   [\alpha_{t-1}]_i&\text{if }i\notin\{i_1,\ldots,i_b\}.
  \end{cases}
 \end{align*}
 Thus,
 \begin{align}
  \mathsf{E}_t[\|f'(x_t)-\alpha_t\|_1]
  &=\sum_{i=1}^m\mathsf{E}_t\!\left[\left|\frac{1}{m}f_i'(\langle a_i,x_t\rangle-[\alpha_t]_i\right|\right]\nonumber\\
  &=\sum_{i=1}^m\left(1-\frac{b}{m}\right)\left|\frac{1}{m}f_i'(\langle a_i,x_t\rangle-[\alpha_{t-1}]_i\right|\nonumber\\
  &=\left(1-\frac{b}{m}\right)\|f'(x_t)-\alpha_{t-1}\|_1.\label{lemcsfw:1}
 \end{align}
 Then, by the triangular inequality and $L$-smoothness of $f_1,\ldots,f_m$,
 \begin{align}
  \|f'(x_t)-\alpha_{t-1}\|_1
  &\leq\|f'(x_t)-f'(x_{t-1})\|_1+\|f'(x_{t-1})-\alpha_{t-1}\|_1\nonumber\\
  &=\sum_{i=1}^m\frac{1}{m}|f_i'(\langle a_i,x_t\rangle)-f_i'(\langle a_i,x_{t-1}\rangle)|+\|f'(x_{t-1})-\alpha_{t-1}\|_1\nonumber\\
  &\leq\frac{L}{m}\sum_{i=1}^m|\langle a_i,x_t-x_{t-1}\rangle|+\|f'(x_{t-1})-\alpha_{t-1}\|_1\nonumber\\
  &=\frac{L}{m}\|A(x_t-x_{t-1})\|_1+\|f'(x_{t-1})-\alpha_{t-1}\|_1.\label{lemcsfw:2}
 \end{align}
 Now, similarly to the proof of Lemma~\ref{lem:x},
 \begin{align*}
  \|A(x_t-x_{t-1})\|_1
  &=\left\|\sum_{k=0}^{K-1}\left(\prod_{\ell=k+1}^{K-1}(1-\gamma_\ell^{(t-1)})\right)\gamma_k^{(t-1)}A(v_k^{(t-1)}-x_{t-1})\right\|_1\\
  &\leq\sum_{k=0}^{K-1}1\cdot\gamma_{t-1}\cdot\|A(v_k^{(t-1)}-x_{t-1})\|_1\\
  &\leq K\gamma_{t-1}D_1^A.
 \end{align*}
 Together with~\eqref{lemcsfw:1} and~\eqref{lemcsfw:2}, we obtain
 \begin{align*}
  \mathsf{E}_t[\|f'(x_t)-\alpha_t\|_1]
  \leq\left(1-\frac{b}{m}\right)\left(\|f'(x_{t-1})-\alpha_{t-1}\|_1+\frac{KLD_1^A}{m}\gamma_{t-1}\right).
 \end{align*}
 The second result follows as in \citet[Lemma~3]{negiar20}. 
\end{proof}

\begin{theorem}[(Theorem~\ref{th:adacsfw})]
 Consider AdaCSFW (Algorithm~\ref{adacsfw}) with $\eta_t\leftarrow m\lambda_t^-/(L\|A\|_2^2)$ and $\gamma_t\leftarrow2/(t+2)$, and let $\kappa\coloneqq\lambda_0^+/\lambda_0^-$. Then for all $t\in\llbracket1,T\rrbracket$,
 \begin{align*}
  \mathsf{E}[f(x_t)]-\min_\mathcal{C}f
  &\leq\frac{2L}{t+1}\left(4K(K+1)D_1^AD_\infty^A\left(\frac{1}{b}-\frac{1}{m}\right)+\frac{\kappa\|A\|_2^2D_2^2}{m}\right)\\
  &\quad+\frac{2(K+1)D_\infty^A(m/b)^2}{t(t+1)}\left(\|f'(x_0)-\alpha_0\|_1+\frac{16KLD_1^A}{b}\right).
 \end{align*}
\end{theorem}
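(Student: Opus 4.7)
The plan is to mirror the recursion-based analysis used for AdaSFW, but replace the generic stochastic bounds by the separability-aware tools tailored to the CSFW estimator: the identity $\tilde{\nabla}f(x_t)=A^\top\alpha_t$ and $\nabla f(x_t)=A^\top f'(x_t)$ (so the gradient mismatch lives in $\operatorname{Im}(A^\top)$), Hölder's inequality in the $(\ell_1,\ell_\infty)$ pair applied after pulling out $A$, and the $\ell_1$ bound from Lemma~\ref{lem:csfw}. The choice $\eta_t=m\lambda_t^-/(L\|A\|_2^2)$ is designed precisely so that the ``local'' smoothness along the data directions, namely $f(y)-f(x)-\langle\nabla f(x),y-x\rangle\leq(L/(2m))\|A(y-x)\|_2^2\leq(L\|A\|_2^2/(2m))\|y-x\|_2^2$, is dominated by $\frac{1}{2\eta_t}\|y-x\|_{H_t}^2$ through~\eqref{h2}, which lets the inner-loop guarantee of Lemma~\ref{lem:sub} be exploited as in the AdaSFW proof.

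Concretely, applying the local-smoothness inequality at $(x_t,x_{t+1})$ gives $f(x_{t+1})\leq Q_t(x_{t+1})+\langle\nabla f(x_t)-\tilde{\nabla}f(x_t),x_{t+1}-x_t\rangle$. By Lemma~\ref{lem:sub}, $Q_t(x_{t+1})\leq Q_t(x_t+\gamma_t(v_t-x_t))$ where $v_t=v_0^{(t)}\in\argmin_{v\in\mathcal{C}}\langle\tilde{\nabla}f(x_t),v\rangle$. Expanding $Q_t$ and comparing $v_t$ to a minimizer $x^*\in\argmin_\mathcal{C}f$ yields, as in~\eqref{proof:x*}, the basic step inequality
\begin{align*}
 \epsilon_{t+1}
 \leq(1-\gamma_t)\epsilon_t
 +\gamma_t\langle\tilde{\nabla}f(x_t)-\nabla f(x_t),x^*-x_t\rangle
 +\tfrac{\gamma_t^2}{2\eta_t}\|v_t-x_t\|_{H_t}^2
 +\langle\nabla f(x_t)-\tilde{\nabla}f(x_t),x_{t+1}-x_t\rangle.
\end{align*}
For the two mismatch terms, factor $A$ out and apply Hölder to get $|\langle A^\top(f'(x_t)-\alpha_t),x^*-x_t\rangle|\leq\|f'(x_t)-\alpha_t\|_1 D_\infty^A$ and $|\langle A^\top(f'(x_t)-\alpha_t),x_{t+1}-x_t\rangle|\leq\|f'(x_t)-\alpha_t\|_1\,\|A(x_{t+1}-x_t)\|_\infty$; an $\ell_\infty$-version of Lemma~\ref{lem:x}, obtained by repeating its convex-combination telescoping under $\|A\cdot\|_\infty$, bounds the latter by $K\gamma_t D_\infty^A$. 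The quadratic term is handled as in~\eqref{h-2}: $\tfrac{\gamma_t^2}{2\eta_t}\|v_t-x_t\|_{H_t}^2\leq\gamma_t^2 L\kappa\|A\|_2^2 D^2/(2m)$. Taking expectations yields the clean recursion
\begin{align*}
 \mathsf{E}[\epsilon_{t+1}]
 \leq(1-\gamma_t)\mathsf{E}[\epsilon_t]+(K+1)\gamma_t D_\infty^A\,\mathsf{E}[\|f'(x_t)-\alpha_t\|_1]+\frac{\gamma_t^2 L\kappa\|A\|_2^2 D^2}{2m}.
\end{align*}

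To unroll, use the standard FW product identity $\prod_{k=s+1}^{t-1}(1-\gamma_k)=(s+1)(s+2)/(t(t+1))$ for $\gamma_k=2/(k+2)$, giving $\mathsf{E}[\epsilon_t]\leq\sum_{s=0}^{t-1}A_s(s+1)(s+2)/(t(t+1))$. The $\gamma_s^2$ term contributes a sum $\sum_s(s+1)/(s+2)\leq t$ producing the $\kappa\|A\|_2^2 D_2^2/(m(t+1))$ summand. Plugging Lemma~\ref{lem:csfw} into the mismatch term splits the latter into three sub-sums corresponding to the three additive pieces of that bound: the $2(m/b-1)/(s+2)$ piece combines with the factor $\gamma_s/(s+2)\sim 1/(s+2)^2$ to yield $\sum_s(s+1)/(s+2)\leq t$, producing the $K(K+1)D_1^A D_\infty^A(1/b-1/m)/(t+1)$ contribution, while the purely geometric pieces $(1-b/m)^s$ and $(1-b/m)^{s/2}\ln(s/2+1)$ are weighted by $(s+1)$ and summed using $\sum_{s\geq 0}(s+1)r^s=(1-r)^{-2}$ with $r=1-b/m$, producing the $(m/b)^2/(t(t+1))$ factor in the second bracket.

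The main obstacle will be the last estimate: bounding $\sum_s(s+1)(1-b/m)^{s/2}\ln(s/2+1)$ by a sharp enough constant multiple of $(m/b)^2\cdot(m/b)$ so that the logarithmic piece absorbs into the promised $(16KLD_1^A/b)$ factor alongside the $\|f'(x_0)-\alpha_0\|_1$ piece. This is a purely calculus estimate for a geometric-times-logarithm series, analogous to the one already used inside Lemma~\ref{lem:csfw}, and the rest of the proof is algebraic bookkeeping once the three sub-sums are separated and the step inequality is in place.
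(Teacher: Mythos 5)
Your proposal follows essentially the same route as the paper's proof: the $(L/m)$-smoothness along the data directions combined with the choice of $\eta_t$ to absorb the quadratic into $\frac{1}{2\eta_t}\|\cdot\|_{H_t}^2$, Lemma~\ref{lem:sub} for the inner loop, the factorization $\nabla f(x_t)-\tilde{\nabla}f(x_t)=A^\top(f'(x_t)-\alpha_t)$ with the $(\ell_1,\ell_\infty)$ H\"older pairing and the $\ell_\infty$-analogue of Lemma~\ref{lem:x}, then Lemma~\ref{lem:csfw} and the same three sub-sums (the paper bounds the geometric-times-logarithm series by $8(m/b)^3$, which is exactly the estimate you flag as the remaining obstacle and which yields the $16KLD_1^A/b$ term). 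The argument is correct and matches the paper's.
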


\begin{proof}
 By $(L/m)$-smoothness of $\varphi\colon\xi\in\mathbb{R}^m\mapsto(1/m)\sum_{i=1}^mf_i([\xi]_i)$ \citep[Proposition~2]{negiar20}, we have
 \begin{align*}
  \varphi(Ax_{t+1})
  \leq\varphi(Ax_t)+\langle\nabla\varphi(x_t),A(x_{t+1}-x_t)\rangle+\frac{L}{2m}\|A(x_{t+1}-x_t)\|_2^2,
 \end{align*}
 i.e.,
 \begin{align*}
  f(x_{t+1})
  &\leq f(x_t)+\langle\nabla f(x_t),x_{t+1}-x_t\rangle+\frac{L}{2m}\|A(x_{t+1}-x_t)\|_2^2\\
  &\leq f(x_t)+\langle\nabla f(x_t),x_{t+1}-x_t\rangle+\frac{L}{2m}\|A\|_2^2\|x_{t+1}-x_t\|_2^2\\
  &\leq f(x_t)+\langle\nabla f(x_t),x_{t+1}-x_t\rangle+\frac{L}{2m}\frac{\|A\|_2^2}{\lambda_t^-}\|x_{t+1}-x_t\|_{H_t}^2\\
  &=f(x_t)+\langle\nabla f(x_t),x_{t+1}-x_t\rangle+\frac{1}{2\eta_t}\|x_{t+1}-x_t\|_{H_t}^2\\
  &=f(x_t)+\langle\tilde{\nabla}f(x_t),x_{t+1}-x_t\rangle+\frac{1}{2\eta_t}\|x_{t+1}-x_t\|_{H_t}^2+\langle\nabla f(x_t)-\tilde{\nabla}f(x_t),x_{t+1}-x_t\rangle\\
  &=Q_t(x_{t+1})+\langle\nabla f(x_t)-\tilde{\nabla}f(x_t),x_{t+1}-x_t\rangle
 \end{align*}
 by definition of $\|A\|_2$ and~\eqref{h2}. Thus, with $v_t\coloneqq v_0^{(t)}$ and since $x_{t+1}=y_K^{(t)}$, $x_t=y_0^{(t)}$, by Lemma~\ref{lem:sub} we have
 \begin{align*}
  f(x_{t+1})
  &\leq Q_t(x_t+\gamma_t(v_t-x_t))+\langle\nabla f(x_t)-\tilde{\nabla}f(x_t),x_{t+1}-x_t\rangle\\
  &=f(x_t)+\gamma_t\langle\tilde{\nabla}f(x_t),v_t-x_t\rangle+\frac{\gamma_t^2}{2\eta_t}\|v_t-x_t\|_{H_t}^2+\langle f'(x_t)-\alpha_t,A(x_{t+1}-x_t)\rangle
 \end{align*}
 by~\eqref{fprime}. By H\"older's inequality,
 \begin{align*}
  f(x_{t+1})
  &\leq f(x_t)+\gamma_t\langle\tilde{\nabla}f(x_t),x^*-x_t\rangle+\frac{\gamma_t^2}{2\eta_t}\lambda_t^+\|v_t-x_t\|_2^2+\|f'(x_t)-\alpha_t\|_1\|A(x_{t+1}-x_t)\|_\infty\\
  &=f(x_t)+\gamma_t\langle\nabla f(x_t),x^*-x_t\rangle+\gamma_t\langle\tilde{\nabla}f(x_t)-\nabla f(x_t),x^*-x_t\rangle\\
  &\quad+\gamma_t^2\frac{\lambda_t^+}{\lambda_t^-}\frac{L\|A\|_2^2}{2m}\|v_t-x_t\|_2^2+\|f'(x_t)-\alpha_t\|_1\|A(x_{t+1}-x_t)\|_\infty,
 \end{align*}
 so, by Lemma~\ref{lem:csfw},
 \begin{align*}
  \mathsf{E}[\epsilon_{t+1}]
  &\leq(1-\gamma_t)\mathsf{E}[\epsilon_t]+\gamma_t\mathsf{E}[\|f'(x_t)-\alpha_t\|_1]D_\infty^A+\gamma_t^2\frac{\kappa L\|A\|_2^2}{2m}D_2^2+\mathsf{E}[\|f'(x_t)-\alpha_t\|_1]KD_\infty^A\gamma_t\\
  &=(1-\gamma_t)\mathsf{E}[\epsilon_t]+\gamma_t(K+1)D_\infty^A\mathsf{E}[\|f'(x_t)-\alpha_t\|_1]+\gamma_t^2\frac{\kappa L\|A\|_2^2}{2m}D_2^2\\
  &\leq\frac{t}{t+2}\mathsf{E}[\epsilon_t]+\frac{2(K+1)D_\infty^A}{t+2}
  \left(1-\frac{b}{m}\right)^t\|f'(x_0)-\alpha_0\|_1+\\
  &\quad+\frac{4K(K+1)LD_1^AD_\infty^A}{m(t+2)}\left(\left(1-\frac{b}{m}\right)^{t/2}\ln\left(\frac{t}{2}+1\right)+\frac{2(m/b-1)}{t+2}
  \right)+\frac{2\kappa L\|A\|_2^2D_2^2}{m(t+2)^2}.
 \end{align*}
 Thus, multiplying both sides by $(t+1)(t+2)$,
 \begin{align*}
  (t+1)(t+2)\mathsf{E}[\epsilon_{t+1}]
  &\leq t(t+1)\mathsf{E}[\epsilon_t]
  +2(K+1)D_\infty^A(t+1)
  \left(1-\frac{b}{m}\right)^t\|f'(x_0)-\alpha_0\|_1\\
  &\quad+\frac{4K(K+1)LD_1^AD_\infty^A}{m}(t+1)\left(1-\frac{b}{m}\right)^{t/2}\ln\left(\frac{t}{2}+1\right)\\
  &\quad+\frac{8K(K+1)LD_1^AD_\infty^A(m/b-1)+2\kappa L\|A\|_2^2D_2^2}{m}\frac{t+1}{t+2}.
 \end{align*}
 Telescoping, we obtain for all $t\in\llbracket1,T\rrbracket$,
 \begin{align*}
  t(t+1)\mathsf{E}[\epsilon_t]
  &\leq 0\cdot1\cdot\mathsf{E}[\epsilon_0]
  +2(K+1)D_\infty^A
  \|f'(x_0)-\alpha_0\|_1\sum_{s=0}^{t-1}(s+1)\left(1-\frac{b}{m}\right)^s\\
  &\quad+\frac{4K(K+1)LD_1^AD_\infty^A}{m}\sum_{s=0}^{t-1}(s+1)\left(1-\frac{b}{m}\right)^{s/2}\ln\left(\frac{s}{2}+1\right)\\
  &\quad+\frac{2L}{m}\left(4K(K+1)D_1^AD_\infty^A\left(\frac{m}{b}-1\right)+\kappa\|A\|_2^2D_2^2\right)\sum_{s=0}^{t-1}\frac{s+1}{s+2}\\
  &\leq2(K+1)D_\infty^A
  \|f'(x_0)-\alpha_0\|_1\cdot\left(\frac{m}{b}\right)^2\\
  &\quad+\frac{4K(K+1)LD_1^AD_\infty^A}{m}\cdot8\left(\frac{m}{b}\right)^3\\
  &\quad+\frac{2L}{m}\left(4K(K+1)D_1^AD_\infty^A\left(\frac{m}{b}-1\right)+\kappa\|A\|_2^2D_2^2\right)t.
 \end{align*}
 Therefore,
 \begin{align*}
  \mathsf{E}[\epsilon_t]
  &\leq\frac{2(K+1)D_\infty^A
  \|f'(x_0)-\alpha_0\|_1(m/b)^2}{t(t+1)}\\
  &\quad+\frac{32K(K+1)LD_1^AD_\infty^A(m/b)^2}{bt(t+1)}\\
  &\quad+\frac{2L}{m}\frac{4K(K+1)D_1^AD_\infty^A(m/b-1)+\kappa\|A\|_2^2D_2^2}{t+1}.
 \end{align*}
\end{proof}

\end{document}